\newcommand{\uC}{\underline{C}}
\newcommand{\umu}{\underline{\mu}}
\newcommand{\bmu}{\bar{\mu}}
\newcommand{\uV}{\underline{V}}
\newcommand{\mus}{{\mu^*(\model)}}
\newcommand{\rmm}{\rho_{\model,\bmodel}}
\newcommand{\Stop}{\mathcal{S}}
\newcommand{\Stopb}{\bar{\mathcal{S}}}
\newcommand{\uStop}{\underline{\mathcal{S}}}
\newcommand{\yp}{y_{\pi;\model}^*}
\newcommand{\yps}{y_{\pi;\model,\bmodel}^*}
\newcommand{\ypsx}{y_{e_X;\model,\bmodel}^*}
\newcommand{\ytt}{y^*_{\model,\bmodel}}
\newcommand{\bpi}{\bar{\pi}}
\newcommand{\ybp}{y_{\bp;\model}^*}
\newcommand{\ypm}{y_{\pi;\bmodel}^*}
\newcommand{\Rp}{\mathcal{R}_{\pi;\model}}
\newcommand{\Rpc}{\mathcal{R}^c_{\pi;\model}}
\newcommand{\tcost}[1]{J_\mu^{(#1)}}
\newcommand{\Rbp}{\mathcal{R}_{\bp;\model}}
\newcommand{\Rpbc}{\mathcal{R}^c_{\pi;\bmodel}}
\newcommand{\traj}{\boldsymbol{\pi}}
\newcommand{\aB}{R}
\newcommand{\bd}{\succeq_B}
\newcommand{\bu}{\bar{u}}
\newcommand{\model}{\theta}
\newcommand{\bmodel}{\bar{\model}}
\newcommand{\bB}{\bar{B}}
\newcommand{\btp}{\bar{\tp}}
\newcommand{\D}{D}
\newcommand{\delayset}{\{\D_1,\D_2,\ldots,\D_L\}}
\newcommand{\delayseti}{\{1,2,\ldots,L\}}
\newcommand{\Delayset}{\{0 \text{ (announce change)} ,\D_1,\D_2,\ldots,\D_L\}}
\newcommand{\Delayseti}{\{0 \text{ (announce change)} ,1,2,\ldots, L\}}
\newcommand{\tpone}{\tp^{(1)}}
\newcommand{\tptwo}{\tp^{(2)}}
\newcommand{\tp}{A}
\newcommand{\f}{{f}}
\newcommand{\mc}{m}
\newcommand{\kstar}{k^*}
\newcommand{\changetime}{t^*}
\renewcommand{\time}{t}
\newcommand{\epoch}{\tau}
\newcommand{\pizero}{\pi_0}
\newcommand{\Ep}{\E^\mu_{\pi_0}}
\newcommand{\Epzero}{\E^\mu_{\pi_0}}
\newcommand{\Cb}{\bar{C}}
\newcommand{\Vb}{\bar{V}}
\newcommand{\F}{\mathcal{F}}
\newcommand{\Tp}{T}
\newcommand{\sigp}{\sigma}
\newcommand{\Mu}{\boldsymbol{\mu}}
\newcommand{\U}{\mathcal{U}}
\newcommand{\X}{\mathbb{X}}
\newcommand{\Y}{\mathbb{Y}}
\newcommand{\ones}{\mathbf{1}_X}
\newcommand{\tpi}{\tilde{\pi}}
\newcommand{\p}{\prime}
\newcommand{\E}                 {\Bbb{E}}
\renewcommand{\P}                 {\Bbb{P}}
\newcommand{\reals}{\mathbb{R}}
\newcommand{\I}{\Pi(X)}
\renewcommand{\u}{\{1,2\}}
\newcommand{\argmin}{\operatorname{argmin}}
\newcommand{\ole}{\stackrel{\triangle}{=}}
\newtheorem{theorem}{Theorem}
\newtheorem{corollary}{Corollary}
\newcommand{\bT}{\bar{T}}
\newcommand{\bsigma}{\bar{\sigma}}
\newtheorem{lemma}{Lemma}
\newtheorem{definition}{Definition}
\newcommand{\gr}{\geq_r}
\newcommand{\lr}{\leq_r}
\newcommand{\gs}{\geq_s}
\newcommand{\ls}{\leq_s}
\newcommand{\bp}{{\bar{\pi}}}
\newcommand{\beq}{\begin{equation}}
\newcommand{\eeq}{\end{equation}}
\newcommand{\qed} {{$\hfill\blacksquare$}}
\begin{document}

\title{When to look at a noisy Markov chain in sequential decision making if measurements are costly? %Applications in Change Detection and Sensor Scheduling 
}
\author{Vikram Krishnamurthy  {\em Fellow, IEEE} 
%\thanks{This work was partially supported by NSERC.}
\thanks{Vikram Krishnamurthy is
 with the Department of Electrical and Computer
Engineering, University of British Columbia, Vancouver, V6T 1Z4, Canada. 
(email:  vikramk@ece.ubc.ca). This research was partially supported by NSERC, Canada.}}

\maketitle

\begin{abstract} A decision maker records measurements of a finite-state  Markov chain  corrupted by  noise. The goal is to decide when   the Markov chain hits  a specific target state. The decision maker can choose from a finite set of sampling intervals to pick the next time
to look at the Markov chain. %; larger delays accrue a smaller measurement cost.  
The aim is to
 optimize an objective comprising of false alarm, delay cost and cumulative measurement sampling cost.
%There is an inherent trade-off between these costs: 
Taking more frequent measurements yields accurate estimates but incurs a higher measurement cost. Making an erroneous decision too soon incurs a false alarm penalty. Waiting too long to declare the target state
incurs a delay penalty.
What is the optimal sequential strategy for the decision maker?
The paper shows that under reasonable conditions,
the optimal strategy has the following intuitive structure:
when the Bayesian estimate (posterior distribution) of the Markov chain is away from the target state, look less frequently; while if the posterior is close to the target state, look more frequently. Bounds are derived for the optimal strategy.
Also the achievable optimal cost
of the sequential detector as a function of transition dynamics and observation distribution is analyzed. The sensitivity
of the optimal achievable cost to parameter variations is bounded in terms of the Kullback divergence.
To prove the results in this paper, novel stochastic dominance
results on the Bayesian filtering recursion are derived. The formulation in this paper  generalizes quickest
time change detection to consider optimal sampling and also yields useful results in sensor scheduling
(active sensing).

 \end{abstract}

\begin{keywords}  
 change detection, optimal sequential sampling,   decision making,
Bayesian filtering,
stochastic dominance, submodularity, stochastic dynamic programming, partially observed Markov decision process
\end{keywords}

\section{Introduction and Examples}  \label{sec:intro}

\subsection{The Problem}
Consider the following quickest detection optimal sampling problem which is a special case of the problem considered
in this paper.
 Let $\epoch_k, k=0,1,\ldots$ denote the time
 instants at which decisions to observe a noisy finite state Markov chain are made.
   As it accumulates measurements over time,
  a decision-maker needs to announce when   the Markov chain hits a specific absorbing target state. 
  %In quickest detection, the target state is absorbing.
  %The decision maker can choose from a finite set of sampling  times to pick the next time
%to look at the Markov chain.   More specifically, 
At each decision time  $\epoch_k$, the decision maker needs to pick its decision from the action set  $\U =\Delayset$ where
 \begin{compactitem}
\item  Decision $u_{k}=0$ made at time $\epoch_k$ corresponds to ``{\em announce the target state and stop}''. When this decision is made the problem terminates at time $\epoch_k$ with possibly a  false alarm penalty (if the Markov chain was not in the target state).
\item  Decision $u_{k} \in \delayset$ at time $\epoch_k$  corresponds to:
``{\em Look at noisy Markov chain next at time $\epoch_{k+1} = \epoch_k + u_k$.}" Here 
$\D_1 < \D_2 < \cdots < \D_L$ are fixed positive integers. They denote the set of possible time delays to  sample the Markov chain next.
\end{compactitem}
Given the  history  of past measurements and  decisions, 
how should the decision-maker choose its decisions $u$? Let $\changetime$ denote the time at which the Markov chain hits
that the absorbing target state and $\kstar$ denote the time at which the decision maker announces that the Markov chain has hit 
the target state.
The decision-maker considers the following costs:\\
(i) {\em False alarm penalty}: If $\kstar < \changetime$, i.e., the Markov  chain is not in the target state, but the decision-maker announces that the chain
has hit the target state, it pays a false alarm penalty $\f$.\\
(ii) {\em Delay penalty}: If $\kstar  > \changetime$, i.e.,  the Markov chain hits the target state and the decision-maker does not announce this, it pays a delay penalty $d$.
The decision maker continues to pay this delay penalty over time until it announces the target state has been reached.
\\
(iii) {\em Sampling cost}: At each  decision time $\tau_k$, the decision maker  looks at the noisy Markov chain and
 pays a measurement (sampling) cost $m$. %(in general $m$ can depend on the Markov state and action).
 
 Suppose the Markov chain starts with initial distribution $\pi_0$ at time $0$.
  {\em What is the optimal sampling strategy $\mu^*$ for the decision-maker
 to minimize  the following  combination of the false alarm rate, delay penalty and measurement cost?} 
That is, determine $\mu^* = \inf_\mu J_\mu(\pizero) $ where %the infimum is over stationary strategies and
 \beq J_\mu(\pizero) =   d \Ep\{(\kstar - \changetime)^+\} + \f\, \P^\mu_{\pi_0}(\kstar < \changetime)  +
 m \sum_{u=1}^L \sum_{k:\epoch_k \leq \kstar} \P^\mu_{\pi_0}(u_k = u)
\label{eq:ksintro}
\eeq
Here $\mu$ denotes a  stationary strategy of  the decision maker.  $\P^\mu$ and $\Ep$ are the probability measure and expectation
of the evolution of the observations and Markov state which are strategy dependent (These are defined formally
in Sec.\ref{sec:qdbasic}).
 Taking  frequent measurements yields accurate estimates but incurs a higher measurement cost. Making an erroneous decision too soon incurs a false alarm penalty. Waiting too long to declare the target state
incurs a delay penalty.

 \subsection{Context}
In the special case  when the change time $\changetime$ is 
geometrically-distributed (equivalently, the Markov chain has two states), action space
$\U = \{0 \text{ (announce change)},1 \text{ (continue)}\}$, measurement cost $m = 0$, then (\ref{eq:ksintro}) 
becomes the classical Kolmogorov--Shiryayev
 quickest detection problem \cite{Shi78,PH08}.
Our setup generalizes this  in the following non-trivial ways:\\
First, unlike quickest detection, there are now multiple
``continue" actions  $u\in \delayseti$ corresponding to different sampling  delays  $\delayset$. (In quickest detection there is only one continue action and one stop action).
Each of these ``continue'' actions  result
in different dynamics of the posterior distribution and incur different costs. Also, the measurement costs
can be state and action dependent. 
\\
  Second, allowing for the underlying Markov chain to have multiple states  facilitates 
modelling general phase-distributed (PH-distributed) change times
(compared to two state Markov chains that model geometric distributed change times).
As described in \cite{Neu89}, a PH-distributed change time can be modelled
as a multi-state Markov chain with an absorbing state.  The optimal detection of a PH-distributed change point is useful  since  PH-distributions form a dense subset for the set of all distributions; see \cite{Kri11} for quickest detection 
with PH-distributed change times.
 %\\
%Third, motivated by sensor scheduling applications, we allow for
% multiple jumps into and out of the target state; that is, unlike quickest detection,
%  the target state is not necessarily absorbing. 

\subsection{Main Results, Organization and Related Works}
This paper analyzes the structure of the optimal sampling strategy of the decision-maker.
 The problem is an instance of a 
partially observed Markov decision process (POMDP) \cite{Cas98}.
   In general, solving POMDPs and therefore determining the optimal strategy is computationally intractable (PSPACE hard \cite{PT87}). 
  However, returning to the example considered above,
intuition suggests that the following  strategy would be sensible  (recall that the action set  $\U =\Delayset$):
\begin{compactitem}
\item If the Bayesian posterior distribution estimate of the Markov chain (given past observations and decisions) is away from the target state,  look infrequently at the noisy Markov chain. i.e., pick a large sampling interval $D_u$. Since we are interested in detecting when the Markov chain
hits the target state, there is little point in incurring a measurement cost by looking at the Markov chain when its estimate suggests that it is far away from the target state.
\item If the posterior distribution is close to the target state, then pay a higher sampling cost and look more frequently at the 
noisy Markov chain, i.e., pick a small sampling interval
$D_u$.
\item If the posterior is sufficiently close to the
 target state, then announce the target state has been reached, i.e., choose action $u=0$.
\end{compactitem}
The key point is that such a strategy (choice of sampling interval $D_u$) is monotonically decreasing as the posterior distribution gets closer to the target state.
By using  stochastic dominance and   lattice programming analysis,
   this paper shows that under reasonable conditions, the optimal sampling strategy always has this monotone structure.
  %  These results will be achieved without brute-force computations.
  Lattice programming was championed by \cite{Top98} and provides a  general set of sufficient conditions for the existence
of monotone strategies in stochastic control problems.  This area falls under the general
umbrella of monotone comparative statics that has witnessed remarkable interest in the area of economics \cite{Ath02}.
Our results  apply to general observation distributions (Gaussians, exponentials, Markov modulated Poisson, discrete
memoryless channels, etc) and multi-state Markov chains.

In  more detail, this paper  establishes the following structural results:
\\
{\bf (i)} For  two-state Markov chains  observed in noise,  since the elements of the two-dimensional posterior probability mass function add to 1, 
 it suffices to consider one element  of this posterior -- this element is a probability and lies in the interval $[0,1]$.
Theorems~\ref{thm:main1} and \ref{cor:qd} show that under reasonable conditions
the optimal sampling strategy of the decision-maker has a monotone structure in the posterior distribution.
The monotone structure of  Theorem~\ref{thm:main1} reduces
a function space optimization problem  (dynamic programming on the space of posterior distributions) to a finite dimensional optimization -- 
since a monotone strategy  with $L$ possible actions has at most $L-1$ thresholds in the space of posterior distributions.
The threshold values  can be estimated via simulation based stochastic approximation.
The  monotone structure holds even for large delay penalty and measurement cost that is 
 independent of the state.
  If satisfaction is viewed as the number of times the decision maker looks at the Markov chain, 
  Theorems~\ref{thm:main1} and \ref{cor:qd}    say that ``delayed satisfaction" is optimal. These theorems also directly apply to a measurement
  control model recently developed in \cite{BV12} as will be discussed in Sec.\ref{sec:geom}.
  \\
{\bf (ii)}  For general-state Markov chains (which can model PH-distributed change times) observed in noise,  
the posterior lies in a $X-1$ dimensional unit simplex.
%determining sufficient conditions for the optimal sampling strategy 
%to  have a threshold structure is an open problem. Is the myopic sampling strategy useful?
Theorem \ref{thm:myopic} shows that the optimal decision-maker's sampling strategy can be under-bounded by a 
judiciously chosen myopic strategy on the unit simplex of posterior distributions. Therefore the myopic strategy forms an easily
computable rigorous lower bound
to the optimal strategy. Sufficient conditions are given for the myopic strategy to have a monotone structure with respect
to the monotone likelihood ratio stochastic order on the simplex.
Theorem \ref{cor:qd2} illustrates the result for quickest detection problems.\\
{\bf (iii)} 
How does the  optimal expected  sampling cost  vary
  with
 transition matrix and noise distribution?  Is it possible to order   these parameters
 such that the larger they are, the larger the optimal sampling cost? Such a result would allow us to compare the optimal
 performance of different sampling models, even though computing these is intractable
For general-state Markov chains  observed in noise, Theorem~\ref{thm:tmove}  examines how the cost achieved by the optimal sampling strategy varies with
 transition matrix (state dynamics) and observation matrix (noise distribution). In particular  dominance measures are introduced
 for the transition matrix and observation distribution (Blackwell dominance) that result in the optimal cost increasing with respect
 to this dominance order. Theorem~\ref{thm:tmove}  shows that  for  optimal sampling 
problems, certain PH-distributions for the change time result in larger total optimal cost compared to other 
distributions.
\\
{\bf  (iv)}  Theorem \ref{thm:sens} derives  sensitivity bounds on the total cost for optimal sampling with
 a mis-matched model. That is, when the optimal strategy computed for a specific sampling model is used for a different
 sampling model, Theorem \ref{thm:sens} gives an explicit bound on the performance degradation.  In particular, by elementary use
 of the Pinsker inequality \cite{CT06},  Theorem \ref{thm:sens} shows that the sensitivity is a linear function  of the Kullback-Leibler divergence between the two models.
 Also, the bounds are tight in the sense
 that if the difference between the two models goes to zero, so does the performance degradation.
 \\
{\bf  (v)} To prove the above results, several important stochastic dominance properties of the Bayesian filter are presented in Theorem \ref{thm:filter}. How does the posterior distribution computed by the Bayesian filter vary with observation, prior, transition matrix
and observation matrix? Is it possible to order these so that the posterior distribution increases with respect to this ordering?
 These results are of independent interest.
 The theorem gives sufficient conditions for the Bayesian filtering recursion to preserves the MLR (monotone likelihood ratio)
 stochastic order, and for the normalization measure to be  submodular. It also shows  that if starting with two different transition matrices but identical priors, then the optimal predictor with the larger transition matrix
 (in terms of the order introduced in (\ref{eq:mor})) MLR dominates the predictor with the smaller transition matrix.

{\em Related Works}: In this paper we consider sampling control with change detection. A related problem is measurement control where
at each time the decision is made  whether to take a measurement
or not. This is the subject of the recent paper\footnote{The author is very grateful to Dr.\ Venu Veeravalli of U.\ Illinois Urbana Champaign
for sharing the results in \cite{BV12} and several useful discussions}  \cite{BV12} which considers geometric-distributed change times (2-state Markov chain).
The problem in \cite{BV12} can be formulated in terms of  our optimal sampling problem.  %by considering the action
%space $\U =\Delayset$ with sampling interval $D_i = i$ and $L$ chosen sufficiently large.
 We discuss this further in Sec.\ref{sec:2state}. 
%In \cite{BV12}, a different action space is chosen, namely $\{ 0 \text{ (announce change)}, \text{ take measurement}, \text {no measurement} \}$.
%With this action space, \cite{BV12} shows that the optimal strategy is not necessarily monotone in the posterior.  However, with the
%action space $\U$ defined above, Theorem \ref{cor:qd} shows that the optimal strategy indeed is monotone - see discussion in
%Sec.\ref{sec:2state}.

We also refer to the seminal work of Moustakides (see \cite{YMW12} and references therein) in event triggered sampling. 
Quickest detection has been studied widely, see \cite{PH08,TV05} and references therein.
We have considered recently  a POMDP approach to quickest detection with social learning \cite{Kri12} and non-linear penalties \cite{Kri11} 
and phase-distributed change times. However,
in these papers, there is only one continue and one stop action. The results in the current paper are considerably more general due
to the propagation of different dynamics for the multiple continue actions. A useful feature of the lattice programming approach  \cite{Alb79,Lov87,Rie91}
used in this paper is that the results apply to general observation noise distributions (Gaussians, exponentials, discrete
memoryless channels)  and multiple state Markov chains. Also,  the results proved here are valid for finite sample
sizes and no asymptotic approximations in signal to noise ratio are used.

\subsection{Examples: Change Detection and Sensor Scheduling} 
Several examples in statistical signal processing are special cases of the above measurement-sampling control model.  The terms active/smart/cognitive  
sensing imply the use of feedback of previous estimates and decisions to choose the current optimal decision. 
%Such problems
%also arise in change detection and sensor scheduling as we now describe.

{\em Example 1. Quickest Time Change Detection with Optimal Sampling}:
Return to the problem  considered at the beginning of this section. The action space is $\{0 \text{ (announce change)}, D_1=1,D_2=3,D_3=5,D_4=10\}$.
That is, at each decision time, the decision  maker has the option of either stopping  or looking at a 2-state Markov chain every 1, 3, 5 or 10
time points.
 Suppose the decision maker observes the underlying Markov chain
via a binary erasure channel (parameters values are specified in Sec.\ref{sec:numerical}). %What is the optimal strategy $\mu^*(\pi)$
%where $\pi$ denotes the Bayesian posterior distribution of the state of the Markov chain? 

Theorem \ref{thm:main1} shows
that % under reasonable conditions
 the optimal strategy has a monotone structure in posterior $\pi(1)$ depicted in 
Figure \ref{fig:thm1}(a).
   The horizontal axis in Figure \ref{fig:thm1}(a) denotes the Bayesian
 posterior $\pi(1)$  while the vertical axis denotes the optimal action taken. 
Therefore, when the posterior is less than $\pi_4^*$, it is optimal to look every 10 time points at the noisy Markov chain, for posterior in the interval $[\pi_4^*,\pi_3^*]$
look every 5 points at the noisy Markov chain, etc.
 Thus one only needs to compute/estimate the threshold values $\pi_1^*, \pi_2^*, \pi_3^*,\pi_4^*$ to determine the optimal strategy.
  The usefulness of Theorem \ref{thm:main1} 
 is further enhanced by noting
that  in general (without introducing conditions) the optimal strategy does not have this property.
%If the sufficient  conditions of Theorem \ref{thm:main1}  do not hold,
%can the optimal strategy behave quite differently? 
Figure \ref{fig:thm1}(b) gives an example where
 the sufficient conditions of Theorem \ref{thm:main1} are violated and
  the optimal strategy is no longer monotone.

\begin{figure}\centering
\mbox{\subfigure[Structured Strategy]{\epsfig{figure=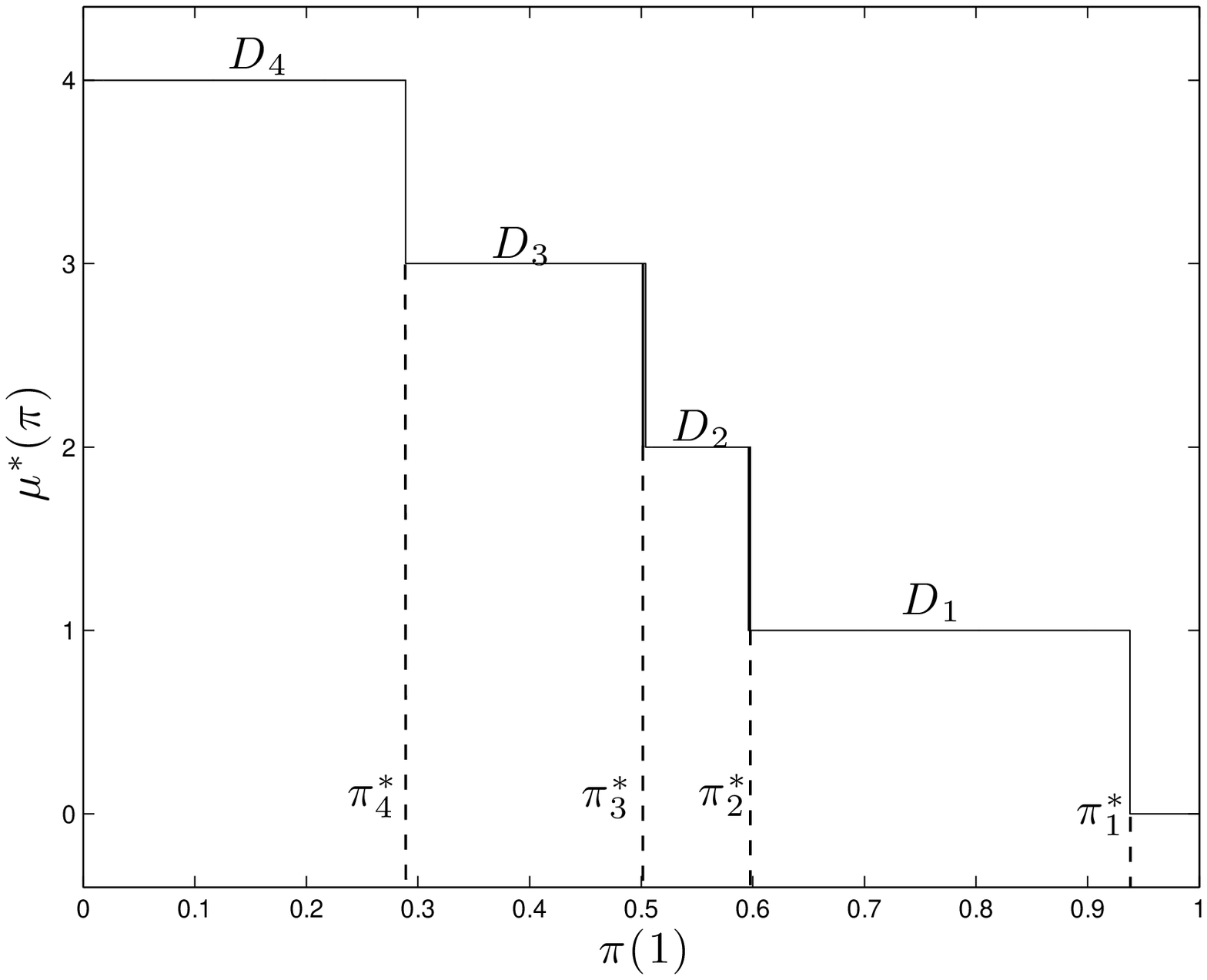,width=0.35\linewidth} } \quad
\subfigure[Unstructured Strategy]{\epsfig{figure=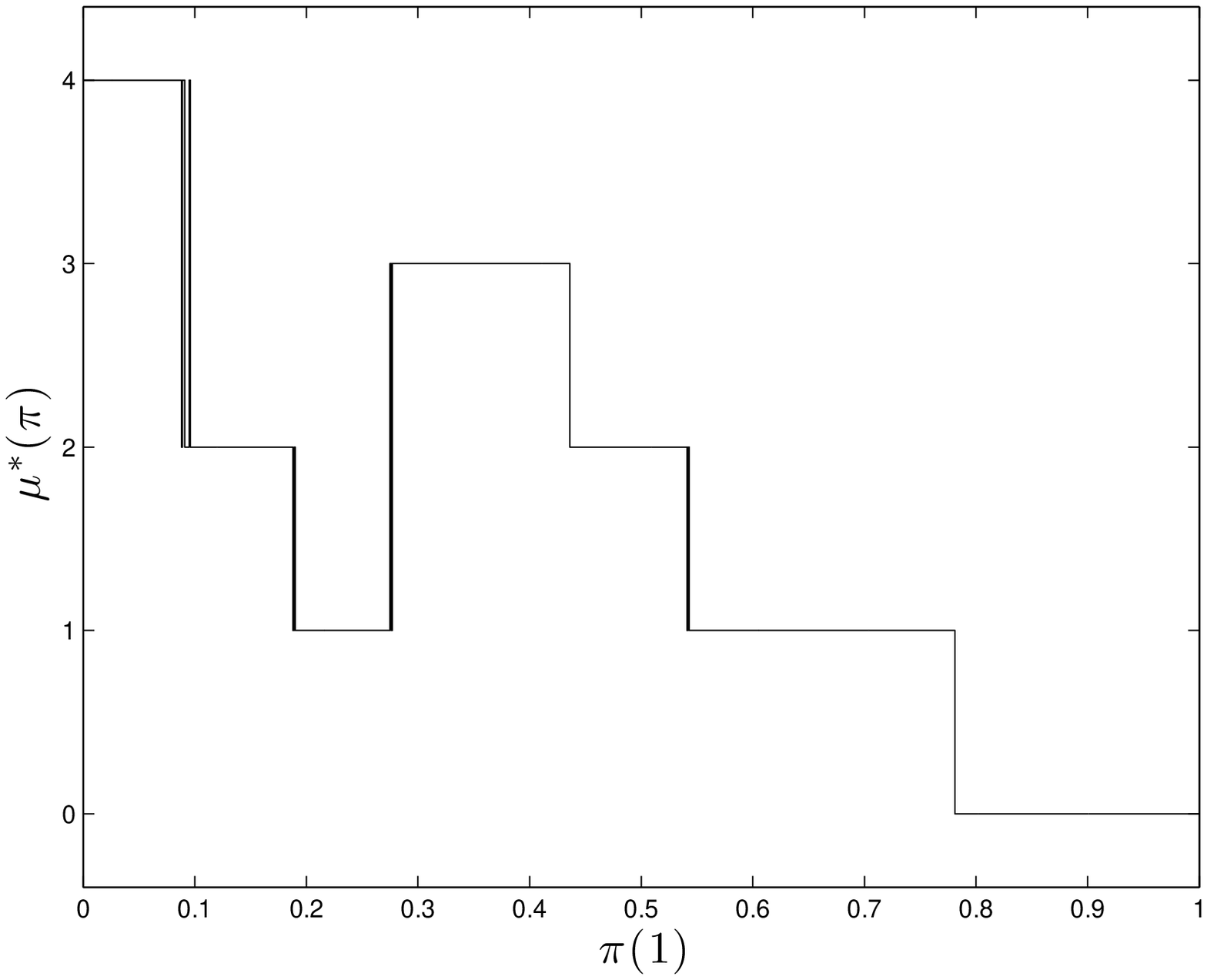,width=0.35\linewidth} }}
\caption{Optimal sampling strategy $\mu^*(\pi)$ for action space $u\in \{ 0 \text{ (announce change) }, D_1=1,D_2=3,\;D_3=5,\;D_4=10\}$ for a quickest-change detection
problem with geometric change time.  The noisy observations are from
a binary erasure channel and the parameters are specified in Example 1 of Sec.\ref{sec:numerical}.
 Figure
 \ref{fig:thm1}(a) depicts a monotone decreasing optimal strategy in posterior $\pi(1)$. Theorem \ref{thm:main1} gives sufficient conditions under which 
 the optimal sampling strategy $\mu^*(\pi)$ has this structure. The threshold
values  $\pi_1^*,\pi_2^*,\pi_3^*,\pi_4^*$ give a finite dimensional characterization of the optimal strategy.
Fig.\ref{fig:thm1}(b) gives an example where  the conditions of Theorem \ref{thm:main1} are violated
and the optimal strategy is no longer monotone in $\pi(1)$.}
\label{fig:thm1}
\end{figure}

\begin{comment}
{\em Example 2. Decision making with delayed satisfaction}: A related problem that we will also analyze in this paper is sequential decision
making with ``delayed satisfaction''. 
 Even though the setup is different, the structural results given in this paper apply to this problem. Let us briefly
describe this problem.
  At each decision interval $\epoch_k$, a delayed measurement is made of the state at time $k- u_k$ where as before, $u_k \in \delayset$ denotes the set of possible delays.
If one is willing to get a measurement with a larger delay, the cost of acquiring the measurement is typically smaller. As in the
formulation above, the aim is to determine the optimal choice of $u_k$ at each time to minimize a combination of false alarm, delay penalty and measurement cost. \end{comment}

{\em Example 2. Sensor Measurement Scheduling}: 
%This generalizes the above setup by allowing
%s multiple jumps into and out of the target state; that is, the target state is not necessarily absorbing.
In  sensor and radar resource management problems, the sensor is a resource that needs
to be allocated amongst several targets \cite{Kri02,KD07}. Deploying a sensor to  look at a target consumes sensor resources. How should a sensor scheduler
 decide how often to look at a target in order to detect if the target has made a sudden maneuver (modelled by the Markov chain
 jumping to a target state)? In radar resource management
 \cite{KBGM12} this is called the revisit time problem.

\section{Formulation of Optimal Sampling Problem} \label{sec:qdbasic}

 Let $\time = 0,1,\ldots$ denote discrete time and 
%Assume  the state of nature 
$x_\time$  denote %evolves as 
a Markov chain on
the finite state space 
\beq  \{e_1,\ldots,e_X\} \text{  where $e_i$ is the $X$-dimensional unit vector with 1 in the $i$-th position}. \label{eq:ei}\eeq
 Here  state `1'  (corresponding to $e_1$) is labelled as the ``target state''. 
  Denote
\beq 
\X = \{1,2,\ldots,X\}
. \eeq
Denote $X\times X$ 
transition probability matrix $\tp$  and the $X\times 1$ initial distribution $\pi_0$ where
\beq \tp=(\tp_{ij},i,j\in \X), \; \tp_{ij} = P(x_{\time+1}=e_j|x_\time=e_i), \quad \pi_0 = (\pi_0(i), i \in \X),\;  \pi_0(i) = 
P(x_0 = e_i).\label{eq:init} \eeq

\subsection{Measurement Sampling Protocol} \label{sec:qdmodel}

  Let  $\epoch_1,\ldots,\epoch_{k-1}$ denote previous discrete time instants at which measurement
samples  were taken. %These
%are also the times at which decisions are made (as described below).
Let $\epoch_k$ denote the current time-instant at which a measurement is taken. The measurement sampling protocol proceeds according to the  following
steps:

 {\em Step 1. Observation}: A noisy measurement $y_k \in \Y$ at time $\epoch_k$  of the Markov chain is  obtained with
conditional probability distribution
 \beq \label{eq:obs}
 P(y_{k} \leq \bar{y}|x_{\epoch_k} = e_x) = \sum_{y\leq \bar{y}} B_{xy},\quad x \in  \X \eeq
Here $\sum_y$ denotes integration with respect to the Lebesgue measure
(in which case $\Y\subset \reals$ and $B_{xy}$ is the conditional probability density function)
or counting measure (in which case $\Y$ is a subset of the integers and $B_{xy}$ is the conditional probability
mass function $B_{xy}= P(y_{k}=y|x_{\epoch_k}=e_x)$).

{\em  Step 2. Sequential Decision Making}:  
Denote the filtration generated by  measurements and past decisions (denoted $u_1,\ldots,u_{k-1}$) as
  \beq
  \label{eq:sigf} \F_k = \text{ $\sigma$-algebra generated by } (y_{1},\ldots,y_{k},u_1,\ldots,u_{{k-1}}). \eeq
 At time $\epoch_k$, a $\F_k$ measurable decision $u_k \in \U $ is taken
where action
%according to a stationary strategy 
\begin{align}  \label{eq:actionstrategya}
 u_{k} &= \mu(\F_k)  \in \U = \Delayseti \\
  \text{ and } & u_k = l \text{ denotes:  obtain next measurement after } D_l \text{ time points}, l \in \delayseti . \nonumber
 \end{align}
In (\ref{eq:actionstrategya}),  the strategy $\mu$  belongs to the class of stationary decision
strategies denoted $\Mu$. Also, $D_1,\ldots,D_L$ are distinct positive integers that denote the set of possible sampling time intervals.
Thus the  decision $u_{k}$ specifies the next time $\epoch_{k+1}$ to make a measurement as follows:
\beq
\epoch_{k+1} = \epoch_k  + D_{u_{k}} , \quad u_k \in \delayseti.\eeq

{\em Step 3. Costs}:
Associated with the decision $u_{k} \in \U$, a  cost 
$c(x_t,u_{k})$ is incurred by the decision-maker at each time $\time \in [\epoch_k,\ldots,\epoch_{k+1}-1]$ until the next measurement is taken at time $\epoch_k$. Also a non-negative measurement sampling cost    $\mc(x_{\epoch_k},u_{k})$ is incurred. %Discussion on this cost
%is given in Sec.\ref{sec:exqd} below.

{\em Step 4}: If $u_{k}  = 1$, the problem terminates, else 
set $k$ to $k+1$ and go to Step 1.  \qed \\

\subsubsection*{Belief State Formulation} It is convenient to re-express Step 2 of the above protocol
in terms of the belief state.  It is well known from elementary stochastic control \cite{Lov87} that the belief state
(posterior) constitutes a sufficient statistic for $\F_k$ in (\ref{eq:sigf}).
Denote the belief state as $\pi_k = \E\{x_{\epoch_k}|\F_k\}$. Since the state space (\ref{eq:ei}) comprises of unit indicator vectors, conditional
probabilities and conditional expectations coincide. So
\beq   \pi_k = (\pi_k(i), \, i \in \X) \text{ where } \pi_k(i) = P(x_{\epoch_k} = e_i|y_{1},\ldots,y_{k},u_{1},\ldots,u_{{k-1}}), \text{ initialized by } \pi_0.
\eeq
It is easily proved that the belief state
 is updated via the Bayesian (Hidden Markov Model) filter 
\begin{align}  \label{eq:hmm} \pi_k &= \Tp(\pi_{k-1},y_k,u_{k-1}),  \text{ where }
\Tp(\pi,y,u) = % (F_i(\pi,y), i \in \X), %\nonumber \\
%& 
%F_i(\pi,y) = \frac{P(y|x=i) \pi(i)}{\sigma(\pi,y)}, 
\frac{B_y ({\tp^{\p}})^{D_u}\pi}{\sigp(\pi,y,u)},
\; \sigp(\pi,y, u) = \ones^\p B_y  ({\tp^\p})^{D_u} \pi %\sum_{j=1}^S P(y|x=j) \pi(j)
\\
B_y &= \text{diag}(P(y|x),x\in \X).  \nonumber
 \end{align}
 Here $\sigma(\pi,y,u)$ is the normalization measure of the Bayesian update with $\sum_y \sigma(\pi,y,u) = 1$. Also
  $\ones$ denotes the $X$ dimensional vector of ones.
Note that  $\pi$ in (\ref{eq:hmm})  is an $X$-dimensional probability vector. It belongs to the
 $X-1$ dimensional unit-simplex denoted as
\begin{align}
\I \ole \left\{\pi \in \reals^{X}: \ones^{\p} \pi = 1,
\quad 
0 \leq \pi(i) \leq 1 \text{ for all } i \in \X \right\} \label{eq:Pi}
\end{align}
%We  will denote the unit simplex $\I$ as the belief state space.
For example, $\Pi(2)$ is a one dimensional simplex (unit line segment),
$\Pi(3)$ is a two-dimensional  simplex (equilateral triangle); $\Pi(4)$
is a tetrahedron, etc.
Note that the unit vector states  $e_1,e_2,\ldots, e_X$ defined in (\ref{eq:ei}) of the Markov chain $x$ are the vertices of $\I$.

Step 2 in the above protocol expressed in terms of the belief state reads: At decision time $\epoch_k$
\begin{itemize}
\item {\em Step 2(a)}. Update belief state $\pi_k$ according to Bayesian filter  (\ref{eq:hmm}) 
\item {\em Step 2(b)}. Make decision $u_k \in \U$ using stationary strategy $\mu$ as (see (\ref{eq:actionstrategya}))
\beq \label{eq:actionstrategy}
 u_{k} = \mu(\pi_k)  \in \U = \Delayseti. \eeq
\end{itemize}
%The goal of this paper to optimize the strategy $\mu$ with respect to a global objective function as we will describe next.

\subsection{Sequential Decision-maker's Objective and Stochastic Dynamic Programming} Given the above protocol with measurement-sampling strategy $\mu$ in (\ref{eq:actionstrategy}), we now
define the objective of the sequential decision maker.
Let $(\Omega,\mathcal{F})$ be the underlying measurable space where $\Omega = (\X \times \U \times \Y)^\infty$ is the product space, which is endowed with the product topology and $\mathcal{F}$  is the corresponding product sigma-algebra. For any $\pi_0\in \I$,  and strategy $\mu \in \Mu$,
 there exists a (unique) probability measure $\P^\mu_{\pi_0}$ on  $(\Omega, \mathcal{F})$, %satisfying certain consistency conditions regarding the initial state distribution,  as well as law generating the mode process $\{u_k\}_{k=1,2,\ldots}$,
   see 
 \cite{HL96} for details. Let $\Epzero$  denote the expectation with respect to the measure  $\P^\mu_{\pi_0}$.
 
Define the  $\{\F_k,k\geq 1\}$ measurable stopping time $\kstar$
as
\beq
\kstar = \{ \inf k:  u_k = 0 \text{ (announce target state and stop) } \} .   \label{eq:kstar}\eeq
That is, $\kstar$ is the time at which the decision maker declares the target state has been reached and the problem terminates.
 For each initial distribution $\pi_0 \in \I$, and strategy
 $\mu$, the decision maker's global objective function is
 \beq
 J_\mu(\pizero) = \Ep\left\{\sum_{k=1}^{\kstar-1}  
\left[\mc(x_{\epoch_k},u_k) + 
 \sum_{\time = \epoch_k}^{\epoch_{k+1}-1}
c(x_t,u_k) \right]
+ c(x_{{\epoch_{\kstar}}}, u_{{\kstar}})
 \right\} \label{eq:rawcost}
 \eeq
  Recall that $c(x,u)$ and measurement sampling cost $\mc(x,u)$ are defined in Step 3 of the protocol.
 Using the smoothing property of conditional expectations, (\ref{eq:rawcost}) can be expressed in terms of the belief state $\pi$ as
\begin{align}
 J_\mu(\pizero) &= \Ep\left\{\sum_{k=1}^{\kstar-1}  C(\pi_{k},u_{k}) 
+C(\pi_{{{\kstar}}}, u_{{\kstar}}=0)
 \right\}   \label{eq:cost}\\
\text{ where }  C(\pi,u) &=  C_u ^\p \pi  \text{ for } u \in \U, \; \text{ and the $X$-dimensional cost vector $C_u$ is } \nonumber \\
C_0 &=  \begin{bmatrix}c(e_1,0),\ldots,c(e_X,0)\end{bmatrix}^\p , \nonumber\\  C_u &=
\mc_u + (I+ \tp + \cdots + \tp^{D_u-1}) c_u \text{ for } u \in \delayseti,\nonumber\\
 c_u &= \begin{bmatrix}c(e_1,u),\ldots,c(e_X,u)\end{bmatrix}^\p, \quad
\mc_u = \begin{bmatrix} \mc(e_1,u),\ldots, \mc(e_X,u) \end{bmatrix}^\p .
\nonumber
 \end{align}

The decision-maker aims to determine the  optimal strategy $\mu^* \in \Mu$ to minimize (\ref{eq:cost}),
i.e., 
\beq \label{eq:mustar}
 J_{\mu^*}(\pizero) = \inf_{\mu \in \Mu} J_\mu(\pizero) .\eeq The existence of an optimal stationary strategy $\mu^*$ follows from \cite[Prop.1.3, Chapter 3]{Ber00b}.

%\subsubsection*{Stochastic Dynamic Programming Formulation of Optimal Policy}

Considering  the  global objective  (\ref{eq:cost}),
the optimal stationary strategy $\mu^*: \I \rightarrow \U$ and associated optimal objective $J_{\mu^*}(\pi)$ are the solution of the following
 ``Bellman's stochastic dynamic programming  equation'' 
\begin{align} \label{eq:dp_alg}
\mu^*(\pi)&= \arg\min_{u \in \U} Q(\pi,u) , \;J_{\mu^*}(\pi) = V(\pi) = \min_{u \in \U} Q(\pi,u),\\
 \text{ where }  Q(\pi,u) &=  C(\pi,u)
%\sum_{y \in \Y} \min_{a \in \A} \{(\ca+c_{E})^\p \Tp(\pi ,y) \sigp(\pi,y)\}
+ \sum_{y \in \Y}  V\left( \Tp(\pi ,y,u) \right) \sigp(\pi,y,u),\; u =1 ,\ldots,L, \quad
Q(\pi,0) =  C(\pi,0).
  %\left(\min_{a \in \A} \ca^\p \pi   + \alpha\left[\sum_i g_i^2 \pi(i) - (g^\p \pi)^2\right] \right)  
  \nonumber
\end{align}
Recall $T(\pi,y,u)$ and  $\sigma(\pi,y,u)$ were defined in (\ref{eq:hmm}).
The above formulation is a generalization of a  partially observed Markov decision process (POMDP), since  POMDPs assume
finite observations spaces $\Y$ while in our formulation $\Y$ can be discrete or continuous (see  (\ref{eq:obs})).

Define the   set of belief states where it is optimal to apply action $u=0$ as \begin{multline} 
\Stop =  \{\pi \in \I : \mu^*(\pi) = 0\} = \{\pi \in \I:Q(\pi,0) \leq Q(\pi,u),
 \; u \in \delayseti \} 
 \label{eq:stopset}
 \end{multline}
$\Stop$ is called the {\em stopping set} since it is the set of belief states to ``declare  target state and stop''. 

Since the belief state space $\I$ is an uncountable, Bellman's equation (\ref{eq:dp_alg}) does not translate directly  into 
numerical algorithms. However,  in subsequent sections, we exploit the
structure of Bellman's equation
 to prove various structural results about the optimal strategy $\mu^*$ using lattice programming and stochastic dominance tools.

\begin{comment}
\noindent {\em Value Iteration Algorithm and methodology}:
The   value iteration
algorithm  presented in Appendix~\ref{sec:vi} is  a fixed point iteration of 
Bellman's equation. Since the belief state space $\I$ is an uncountable  set, the
value iteration algorithm (\ref{eq:vi}) is not useful from a computational point of view
since
$V_k(\pi) $ needs to be evaluated at each $\pi \in \I$, an
uncountable set.
However,  in subsequent sections, we exploit the
structure of the value iteration 
recursion to prove various structural results about the optimal strategy $\mu^*$ via mathematical induction.
As described in Appendix \ref{sec:vi}, the value iteration algorithm  generates a sequence of value functions
$\{V_k\}$  that will converge uniformly (sup-norm metric) as $k\rightarrow \infty$ to $V(\pi)$, the optimal value
 function of Bellman's equation.  
 %In general, determining the optimal strategy $\mu^*(\pi)$ is computationally intractable (PSPACE hard) \cite{PT87}.
\end{comment}

\subsection{Example: Quickest Change Detection with Measurement Control} \label{sec:exqd}
We now formulate the quickest detection problem with optimal sampling -- this serves as a useful example to illustrate
 the above general model.
Before proceeding, it is important to recall  that in our  model, decisions (whether to stop, or continue and take next observation sample
after $D_l$ time points) are made at times $\epoch_1,\epoch_2, \ldots$. In contrast,  the state of the Markov chain  (which models the change
we want to detect) can change
at any time $t$. We need to construct the  delay penalty and false alarm penalties carefully
to   take this into account. 
\\
1.  {\em Phase-Distributed (PH) Change time}:  In quickest detection, the target state (which we label as state 1 by convention) is an absorbing state.
States $2,\ldots,X$ (corresponding to unit vectors $e_2,\ldots,e_X$) are now  fictitious states that form  a single composite state that 
the Markov chain $x_t$ resides in before jumping into the target absorbing state.
So the transition matrix (\ref{eq:init})  is
\beq \label{eq:phmatrix}
\tp = \begin{bmatrix}  1 & 0 \\ \underline{\tp}_{(X-1)\times 1} & \bar{\tp}_{(X-1)\times (X-1)} \end{bmatrix}.
\eeq
The ``change time" $\changetime$ denotes the time at which $x_\time$ enters the absorbing state 1,
i.e., \beq \changetime = \inf\{\time > 0: x_\time = 1\} . \label{eq:tau}\eeq
 Of course, in the special case when $x$ is a 2-state Markov chain (i.e., $X=2$),
 the change time $\changetime$ in (\ref{eq:tau})  is geometrically distributed. \\
For the multi-state case, to ensure that $\changetime$ is finite, assume states $2,3,\ldots X$ are transient.
This  is equivalent to $\bar{\tp}$  in (\ref{eq:phmatrix}) satisfying  $\sum_{n=1}^\infty \bar{\tp}^{n}_{ii} < \infty$ for $i=1,\ldots,X-1$ (where $\bar{\tp}^{n}_{ii}$ denotes the $(i,i)$ element of the $n$-th power
of matrix $\bar{\tp}$).
With the  transition probabilities (\ref{eq:phmatrix}), the distribution of the change time $\changetime$ is given by the PH-distribution
\beq \label{eq:nu}
 P(\changetime = 0 )= \pi_0(1), \quad P(\changetime= \time)  = \bar{\pi}_0^\p \bar{\tp}^{\time-1} \underline{\tp}, \quad \time\geq 1 \eeq
 where $\bar{\pi}_0 = [\pi_0(2),\ldots,\pi_0(X)]^\p$.
By  choosing  $(\pi_0,\tp)$ 
and state space dimension $X$,
 one can approximate any given  change-time distribution on $[0, \infty)$ by PH-distribution (\ref{eq:nu}); see 
\cite[pp.240-243]{Neu89}.  Indeed, PH-distributions form a dense subset for the set of all distributions.
\\
2. {\em Observations}: Since   states $2, 3,.\ldots, X$ are fictitious states that shape the PH-distributed change time (\ref{eq:nu}),
they are indistinguishable in terms of the observation $y$.
That is, $B_{2y} =B_{3y} = \cdots = B_{Xy}$ for all $y\in \Y$.
\\
3. {\em  Costs}: Associated with the quickest detection problem are the following costs. \\
{\em (i) False Alarm}:
Let $\kstar$ denote the time $\epoch_k$ at which  decision $u_{k}=0$ (stop and announce target state) is chosen, so that  the problem terminates.
If the decision to stop is made before the Markov chain reaches the target state 1, i.e., $\kstar < 
\changetime$, then a false
alarm penalty $\f$  is paid.  So the false alarm penalty is $\f \sum_{i\neq 1} I(x_{\epoch_k}=e_i,u_k=1)$ where $\f$ is a user defined non-negative constant.
The expected false alarm penalty based on the accumulated history is
\beq  \sum_{i \neq 1} \f \E\{I(x_{\epoch_k}=e_i,u_{k}=1)|\F_k\} = \f (\ones - e_1)^\p \pi_k  I(u_k=1).\eeq
  Recall $\ones$ denotes the $X$-dimensional vector of ones.\\
(ii) {\em Delay cost of continuing}: Suppose decision $u_{k} \in \delayseti$ is taken at time $\epoch_k$. So the next sampling
time  is $\epoch_{k+1} = \epoch_k + D_{u_k}$.
Then for any time $\time \in  [\epoch_k, \epoch_{k+1}-1]$,  the event $ \{x_{\time} = e_1, u_{k}\}$ signifies that a change
has occurred but not been announced by the decision maker. Since the decision maker can make the next decision (to stop
or continue) at $\epoch_{k+1}$, the
 delay cost incurred  in the time  interval  $[\epoch_k, \epoch_{k+1}-1]$ is
  $d \sum_{t=\epoch_k}^{\epoch_{k+1}-1} I(x_{t} = e_1, u_{k} )$ 
  where   $d$ is a non-negative constant.
The expected delay cost in interval  $[\epoch_k, \epoch_{k+1}-1] = [\epoch_k, \epoch_k+D_{u_k}-1] $ is
\beq  d  \sum_{t=\epoch_k}^{\epoch_{k+1}-1} \E\{I(x_{t}= e_1,u_k )|\F_{k}\} 
= d e_1^\p (I + \tp + \cdots + \tp^{D_{u_k}-1})^\p \pi_k, \quad u_k \in \delayseti. \eeq
%Recall that decisions are made at times $\epoch_1,\epoch_2,\ldots$, and hence delay costs are accrued at each time $t$. This is reflected in the above delay cost.\\
%
(iii) {\em Measurement Sampling Cost}: Suppose decision $u_{k} \in \delayseti$ is taken at time $\epoch_k$.
As in (\ref{eq:cost}) let   $\mc_{u_{k}} = (\mc(x_{\epoch_k}=e_i,u_{k}), i\in \X)$ denote the non-negative measurement  cost  vector for choosing to take a measurement. 
For convenience, assume the measurement cost when choosing $u=0$ (stop) is zero.
Next, since in quickest detection, states $2,\ldots,X$ are fictitious states that are indistinguishable in terms of cost, choose $\mc(e_2,u) =\ldots = \mc(e_X,u)$.\\  Examples of measurement sampling costs are:\\
(a) $\mc(e_i,u)$ is independent of state $i$ and action $u$.  This simple choice of a constant measurement cost at each time, still
results in non-trivial global costs for the decision maker since
this   cost is incurred  each time a measurement is made -- so
choosing a decision $u$ with smaller sampling delay will result in 
 more measurements until the final decision to stop, thereby incurring a higher
total measurement cost for the global decision maker.
\\
(b)  $\mc(e_i,u)$ is decreasing in $u$ for $u\neq 0$.  %As explained in (a) above, a constant measurement cost already penalizes choosing
%$u$ to be small. 
Choosing $\mc(e_i,u)$ to decrease in $u$  penalizes  choosing small sampling intervals  even more than  a constant cost.
%A trivial example would be to penalize sampling  independent of the state - this is a special case of $\mc(e_i,u)$ being independent
%of $i$, for $u \neq 0$.

{\bf Summary and  Kolmogorov--Shiryayev criterion}: To summarize, the costs  $C(\pi,u)$  for quickest detection with optimal sampling and PH-distributed change time are
\begin{align}
C(\pi,0) &= \f (\ones - e_1)^\p \pi , \quad C(\pi,u) = c_u^\p \pi + \mc_u^\p \pi , \text{ for } u \in \delayseti , \nonumber\\
\text{ where } &
c_u = d (I+\tp+ \cdots + \tp^{D_u-1}) e_1  .\label{eq:qdcosts} \end{align}

\begin{comment}
To summarize,  the quickest detection costs $\Cb(\pi,u)$ and Bellman's equation for the optimal strategy are (we use the notation
$\Cb$, etc since a coordinate transformation will be applied below)
\begin{align}
\Cb(\pi,0) &= \f^\p \pi, \quad \Cb(\pi,u) = de_1^\p (I+\tp+ \cdots + \tp^{D_u-1})^\p \pi + \mc_u^\p \pi , \text{ for } u \in \delayseti .\label{eq:qdcosts} \\
\mu^*(\pi)&= \arg\min_{u \in \delayseti}\{ \Cb(\pi,0), \;\Cb(\pi,u)
+  \sum_{y \in \Y}  \Vb\left( \Tp(\pi ,y,u) \right) \sigp(\pi,y,u)\} \nonumber \\
J_{\mu^*}(\pi) &= \Vb(\pi) = \min_{u \in \delayseti} \{ \Cb(\pi,0),\; \Cb(\pi,u)
+ \sum_{y \in \Y}  \Vb\left( \Tp(\pi ,y,u) \right) \sigp(\pi,y,u)\}.
  \nonumber
\end{align}
Recall, the Bayesian filter $T(\cdot)$ and normalization measure $\sigma(\cdot)$ are defined in (\ref{eq:hmm}).
\end{comment}

For  constant
measurement cost $\mc(x,u) = m$, $u\in \delayseti$, 
the quickest detection optimal sampling objective
 (\ref{eq:cost}) with costs (\ref{eq:qdcosts})  can be expressed as  %(\ref{eq:ksintro}) 
 \beq J_\mu(\pizero) =   d \Ep\{(\kstar - \changetime)^+\} + \f \P^\mu_{\pi_0}(\kstar < \changetime)  +
 m \sum_{u=1}^L \sum_{k:\epoch_k \leq \kstar} \P^\mu_{\pi_0}(u_k = u)
\label{eq:ks}
\eeq
where the PH-distributed change time
 $\changetime$  and $\kstar$ are
defined in (\ref{eq:tau}),  (\ref{eq:kstar}).
%is the time at which the change is announced by the decision maker.
For the special case
$\U=\{0 \text{ (stop)} ,\D_1=1\}$, measurement cost $\mc_u = 0$,  geometrically
distributed $\changetime$ (so $X=2$), then (\ref{eq:ks}) 
becomes the  Kolmogorov--Shiryayev
criterion for detection of disorder \cite{Shi63}.

% namely
%\beq J_\mu(\pizero) =   d \Ep\{(\kstar - \changetime)^+\} + f_2 \P^\mu_{\pi_0}(\kstar < \changetime)  
%\label{eq:ks}
%\eeq
%Recall $\changetime$ (\ref{eq:tau}) is the change time and $\kstar$ 
%(\ref{eq:kstar})
%is the time at which the change is announced by the decision maker. \qed

\begin{comment}       
Before proceeding, we rewrite (\ref{eq:qdcosts}) in a form that is more amenable for analysis. Note that 
the elements of $\Cb(e_i,0) = f_i$ are not decreasing,  since the false alarm vector $\f$
satisfies $f_1 = 0$ and $f_2 > 0$ (see (\ref{eq:falsev})). In our analysis below (Assumption A1 below), we will require that
the elements of the cost vector  are decreasing with respect to $i \in \X$. We can re-express (\ref{eq:qdcosts}) in the following
form which satisfies this property:
Define 
\begin{align}
V(\pi) &= \Vb(\pi) - \Cb(\pi,0) = \Vb(\pi) - \f^\p \pi, \nonumber \\
C(\pi,0 ) & = 0, \quad C(\pi,u) = \Cb(\pi,u) - \f^\p (I-{\tp^\p}^{D_u}) \pi , \; u \in \delayseti.  \label{eq:costdef} \end{align}
Then clearly $V(\pi)$ satisfies Bellman's dynamic programming  equation (\ref{eq:dp_alg}) and the optimal strategy $\mu^*(\pi)$ remains
unchanged. With this transformation, $C(\pi,0)$ is zero. Sufficient conditions will be given below that ensure that $C(e_i,u)$ are decreasing with $i$
for each $u \in\delayseti$. \end{comment}

%\subsection{Example 2: Quickest Detection with Delayed Satisfaction} Here we elaborate on Example 2 mentioned in Sec.\ref{sec:intro} which
%deals with a variation of the above setup. Assume at each time 

\section{Structural Results for Optimal Sampling Policy $\mu^*(\pi)$ for 2-state case} \label{sec:geom}
This section analyzes the structure of the optimal sampling strategy $\mu^*(\pi)$ (solution of Bellman's equation (\ref{eq:dp_alg})) for two-state Markov chains ($X=2$).
Recall that two-state Markov chains model geometric distributed change times in quickest detection problems.
% Our goal is to characterize the structure of
%optimal strategy $\mu^*(\pi)$ (solution of Bellman's equation (\ref{eq:dp_alg})).

We list the following assumptions that will be used in this section.

\begin{compactitem}
\item[{\bf (A1)}] (i) The costs $C(e_i,u)$  in (\ref{eq:cost})  are increasing with $i \in \X$ for each $u \in \U$.\\
(ii) The target state $e_1$ belongs to the stopping set $\Stop$ defined in (\ref{eq:stopset}).
\item[{\bf (A2)}] The transition matrix $\tp$ is totally positive of order 2 (TP2). That is, all second order minors are non-negative.
\item[{\bf (A3)}] The observation matrix $B$ is TP2.
\item[{\bf (A4)}] $C(e_i,u)$ is submodular for $u\in\delayseti$, that is $C(e_i,u+1)-C(e_i,u)$ is decreasing\footnote{Throughout this paper, we use the term ``decreasing" in the weak sense. That is ``decreasing" means
non-increasing. Similarly, the term ``increasing" means non-decreasing.}  in $i \in \X$.
\end{compactitem}

Consider the following assumption  where  $\tp^{D_u}\vert_{ij}$ denotes the $(i,j)$ element of matrix $\tp^{D_u}$:

\begin{itemize}
\item[{\bf (A5-(i))}]  For each $q\in \X$,  $\sum_{j\geq q} \tp^{D_u}\vert_{i j}$ is submodular. That is,
$\sum_{j\geq q} \tp^{D_{u+1}}\vert_{ij} - \tp^{D_u}\vert_{ij}$  is decreasing in $i \in \X$ for $u=1,2,\ldots,L-1$.
%$\tp^u\vert_{22} -\tp^u\vert_{12} $ 
%is isotone in $u \in \delayset$.
\item[{\bf (A5-(ii))}] $\tp^{D_u}\vert_{22}$ and $\tp^{D_u}\vert_{12}$ is decreasing in $u \in \delayseti$.
\end{itemize}

These assumptions are discussed below in Sec.\ref{sec:discussion} and hold for quickest detection problems.
%With the ordering specified in (\ref{eq:orderU}), we say that a generic function $F(u)$ is {\em isotone} in $u \in \delayseti$ if $F(\D_u) \leq F(\D_{u+1})$ for all $u \in \delaysetim$.

\subsection{Optimality of Threshold Policy for Sequential Optimal Sampling} \label{sec:2state}
%In this section we consider a 2 state Markov chain ($X=2$). As mentioned earlier, a 2-state Markov chain models  geometric-distributed
%change times. 
Note that for a 2-state Markov chain ($X=2$), the belief state space $\I$ is the one dimensional simplex $\pi(1)+\pi(2) = 1$. So it suffices to represent $\pi$ by its first element
 $\pi(1)$. 
 %To present the main result, recall the elements of the action space are ordered as follows:
%\beq  \U = \Delayseti, \quad \text{ where } \D_1 < \D_2 < \cdots < \D_L.  \label{eq:orderU}\eeq
%So $\D_1$ is the smallest sampling interval and $\D_L$ is the  largest sampling interval.

\begin{framed}
\begin{theorem} \label{thm:main1}
Consider the optimal sampling  problem of Sec.\ref{sec:qdbasic} with state dimension $X=2$ and action space $\U$ (\ref{eq:actionstrategya}). Then
the optimal strategy $\mu^*(\pi)$ in (\ref{eq:dp_alg}) has the following structure:
\\
(i) The optimal stopping set $\Stop$ (\ref{eq:stopset}) is a convex subset of $\I$. Therefore under (A0), the stopping set is the interval
 $\Stop = (\pi_1^*,1]$ where
the threshold $\pi^* \in [0,1]$.\\
(ii) Under
assumptions (A1-A5),  the optimal sampling strategy  $\mu^*(\pi)$ in (\ref{eq:dp_alg}) is  decreasing with $\pi(1)$. Thus, there exist up to $L$ thresholds denoted $\pi_1^*, \ldots, \pi_L^*$ with
 $0 \leq \pi_L^* \leq \pi_{L-1}^* \leq \cdots \leq \pi_1^* \leq 1 $ such that the optimal strategy satisfies
 \beq \label{eq:thres}
 \mu^*(\pi) = \begin{cases}  L \text{ (sample after  $D_L$ time points) } & \text{ if } 0 \leq \pi(1) < \pi_L^* \\
 					{L-1} \text{ (sample after  $D_{L-1}$ time points) } & \text{ if }  \pi_L^* \leq \pi(1) < \pi_{L-1}^* \\
					\vdots & \vdots \\
					 1 \text{ (sample after  $D_{1}$ time points) } & \text{ if } \pi_{2}^* \leq \pi(1) < \pi_1^*  \\
					 0 \text{ (announce change )} & \text{ if } \pi_1^* \leq \pi(1) \leq 1 \end{cases}
 \eeq
 where the sampling delays are ordered as $D_1 < D_2 < \ldots < D_L$.
\end{theorem}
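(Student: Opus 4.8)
My plan is to deduce convexity of $\Stop$ from concavity of the value function. First I would show that $V$ in (\ref{eq:dp_alg}) is concave on $\I$ by running value iteration and proving inductively that each iterate is piecewise linear and concave: if $V_n(\pi)=\min_\ell \gamma_\ell^\p\pi$ is a finite minimum of linear functionals, then for every $u\geq 1$ the continuation term $\sum_{y} V_n(\Tp(\pi,y,u))\,\sigp(\pi,y,u)=\sum_y \min_\ell \gamma_\ell^\p B_y (\tp^\p)^{D_u}\pi$ is again a (sum of) minima of linear maps in $\pi$, since the normalization $\sigp$ cancels; adding the linear stage cost $C(\pi,u)=C_u^\p\pi$ and minimizing over $u\in\U$ preserves the min-of-linear form, and concavity survives the sup-norm limit $V_n\to V$ (and the passage to continuous $\Y$ by approximating the integral). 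Given concavity of $V$, action $0$ is optimal exactly when $Q(\pi,0)=C_0^\p\pi$ attains the minimum $V(\pi)$, so with $g(\pi):=Q(\pi,0)-V(\pi)\geq 0$ we have $\Stop=\{\pi:g(\pi)\leq 0\}$. Since $g$ is linear minus concave it is convex, and $\Stop$ is one of its sublevel sets, hence convex. On the one-dimensional simplex a convex set is an interval, and because (A0) places $e_1$ (i.e.\ $\pi(1)=1$) in $\Stop$, that interval contains its right endpoint, giving $\Stop=(\pi_1^*,1]$.

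\textbf{Part (ii): reduction to a submodularity statement.} Here the plan is lattice programming. On $\Pi(2)$ the MLR order $\gr$ is a \emph{total} order that coincides, up to orientation, with the scalar order on $\pi(1)$, so ``decreasing in $\pi(1)$'' is the same as monotone in $\gr$. I would prove that the minimizer $\mu^*(\pi)=\arg\min_{u}Q(\pi,u)$ is $\gr$-monotone by showing $Q$ is submodular on the lattice $\Pi(2)\times\delayseti$, i.e.\ that $Q(\pi,u+1)-Q(\pi,u)$ is $\gr$-monotone in $\pi$. Topkis' theorem then furnishes a monotone selection of the argmin; read back in $\pi(1)$ this is exactly the decreasing staircase (\ref{eq:thres}), whose switching points are automatically ordered $0\leq\pi_L^*\leq\cdots\leq\pi_1^*\leq 1$ (with coincidences allowed whenever an action is never optimal).

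\textbf{Part (ii): establishing submodularity (the crux).} This is the main obstacle. Writing $Q(\pi,u)=C(\pi,u)+\sum_y V(\Tp(\pi,y,u))\sigp(\pi,y,u)$, the stage-cost contribution is handled directly by the submodularity assumption (A4). The continuation term I would treat by an induction on value iteration that carries two hypotheses simultaneously: (a) $V_n$ is $\gr$-monotone, and (b) $Q_n(\pi,u)$ is submodular in $(\pi,u)$. For (a) I would invoke (A1)(i) together with (A2)--(A3): because $\tp$ and $B$ are TP2, Theorem~\ref{thm:filter} guarantees that $\Tp(\pi,y,u)$ is $\gr$-increasing in $\pi$ and that the averaging $\sum_y V_n(\Tp(\pi,y,u))\sigp(\pi,y,u)$ preserves $\gr$-monotonicity, so $V_{n+1}$ stays $\gr$-monotone. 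For (b), the genuinely new feature relative to classical quickest detection is that distinct actions propagate the belief through distinct matrix powers $(\tp^\p)^{D_u}$; assumption (A5-(i)) (submodularity of the tail sums $\sum_{j\geq q}\tp^{D_u}\vert_{ij}$) controls how the predictor shifts when $u$ increases, while (A5-(ii)) controls the $\gr$-ordering of the predictors across consecutive actions, and together they keep the difference $Q_n(\pi,u+1)-Q_n(\pi,u)$ $\gr$-monotone. The delicate point, and where I expect the real work, is propagating monotonicity \emph{and} submodularity simultaneously through the nonlinear normalized filter and the sum/integral over $y$: the TP2 hypotheses are precisely what make the likelihood-ratio comparisons go through, and the homogeneity of the perspective map $\pi\mapsto\sigp(\pi,y,u)\,V(\Tp(\pi,y,u))$ lets the normalization drop out of the ordering arguments.

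\textbf{Conclusion.} Finally, passing to the limit $V_n\to V$ preserves concavity (part (i)) and both $\gr$-monotonicity and submodularity (part (ii)), since these properties are closed under uniform limits. Applying Topkis' theorem to the limiting $Q$ then delivers the monotone argmin and the ordered thresholds of (\ref{eq:thres}), completing the proof.
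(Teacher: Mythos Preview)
Your framework for both parts is the paper's framework: concavity of $V$ gives convexity of $\Stop$ in part (i), and Topkis' theorem applied to submodularity of $Q$ on $[\Pi(2),\gr]\times\delayseti$ gives the monotone staircase in part (ii). Part (i) is essentially the paper's argument.

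The gap is in how you propose to establish submodularity of the continuation term in part (ii). Your plan carries two induction hypotheses, ``(a) $V_n$ is $\gr$-monotone'' and ``(b) $Q_n$ is submodular,'' and hopes that TP2 plus (A5) will push the difference $Q_n(\pi,u{+}1)-Q_n(\pi,u)$ through the filter. But hypothesis (b) is not useful here: submodularity of $Q_n$ does not feed forward through $V_{n+1}=\min_u Q_n(\cdot,u)$ in any way that helps $Q_{n+1}$. What the paper actually uses, and what your sketch omits entirely from part (ii), is \emph{concavity} of $V$. After writing
\[
Q(\pi,u)-Q(\pi,\bu)-Q(\bp,u)+Q(\bp,\bu)
\]
and stripping off the cost piece via (A4) and one piece via submodularity of $\sigma$ (Theorem~\ref{thm:filter}(3), which is where (A5-(i)) enters), there remain cross terms of the form $V(T(\pi,y,u))-V(T(\pi,y,\bu))$ and $V(T(\bp,y,\bu))-V(T(\pi,y,u))$ weighted by different $\sigma$'s. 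These cannot be controlled by MLR monotonicity alone. The paper exploits that for $X=2$ the simplex is an interval, so concavity of $V$ gives secant-slope bounds such as
\[
V(T(\pi,y,u))-V(T(\pi,y,\bu))\;\le\;\bigl[T(\pi,y,u)-T(\pi,y,\bu)\bigr]\,\frac{V(T(\pi,y,u))-V(T(\bp,y,u))}{T(\pi,y,u)-T(\bp,y,u)},
\]
and after combining the three residual summations one is left with an explicit scalar expression (cf.\ \cite{Alb79}) whose sign is governed precisely by (A5-(ii)). Your remark about ``homogeneity of the perspective map'' does not substitute for this step: $\sigma(\pi,y,u)\,V(T(\pi,y,u))$ is linear in $\pi$ only when $V$ is linear, and it is the curvature of $V$ that is doing the work. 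So the missing idea is: in part (ii), add concavity of $V$ to your toolkit (you already proved it in part (i)), decompose the second difference of $Q$ explicitly, and use the one-dimensionality of $\Pi(2)$ to convert concavity into secant bounds that reduce the residual to an algebraic inequality settled by (A5-(ii)).
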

\end{framed}

The proof of Theorem \ref{thm:main1} is in Appendix \ref{app:main1}.
As an  example, consider   quickest detection with optimal sampling for geometric distributed change time.
 From (\ref{eq:phmatrix}),   the transition matrix is 
$ \tp = \begin{bmatrix} 1 & 0 \\ 1-\tp_{22} & \tp_{22} \end{bmatrix}$ and expected change time is $\E\{\changetime\} = \frac{1}{1-\tp_{22}}$
where $\changetime$ is defined in (\ref{eq:tau}).

\begin{framed}
\begin{theorem}\label{cor:qd}
Consider the  quickest detection problem with optimal sampling  and geometric-distributed change time formulated in Sec.\ref{sec:exqd} with costs
defined in (\ref{eq:qdcosts}).
 Assume the measurement cost $\mc(x,u)$ satisfies (A1) and (A4), e.g., the measurement cost is a constant.
Then if (A3)  holds, 
Theorem \ref{thm:main1} holds. So the optimal sampling strategy (\ref{eq:thres}) makes measurements  less 
frequently when away from the target state and more  frequently when
closer to the target state.   (Note, (A1)(ii),  (A2), (A5) hold automatically and no assumptions are required on the delay or stopping costs in
(\ref{eq:qdcosts})).
\end{theorem}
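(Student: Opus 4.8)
The plan is to derive Theorem~\ref{cor:qd} as a direct specialization of Theorem~\ref{thm:main1}: for the geometric--change--time quickest--detection model of Sec.~\ref{sec:exqd} with $X=2$ and costs (\ref{eq:qdcosts}), I would verify that hypotheses (A1)--(A5) hold, the only non-automatic ones being (A3) (which is assumed) and the measurement--cost parts of (A1)(i) and (A4). Here $\tp=\begin{bmatrix}1 & 0\\ 1-\tp_{22} & \tp_{22}\end{bmatrix}$, so $(\tp^n)_{11}=1$ and $(\tp^n)_{21}=1-\tp_{22}^n$. A convenient preliminary step is to subtract the affine term $\f(\ones-e_1)^\p\pi$ from the value function $V$ in (\ref{eq:dp_alg}), giving $\tilde V(\pi)=V(\pi)-\f(\ones-e_1)^\p\pi$; this leaves $\arg\min_u Q(\pi,u)$ (hence $\mu^*$) unchanged, makes the terminal cost $\tilde C(\pi,0)=0$, and turns the continue costs into $\tilde C(\pi,u)=c_u^\p\pi+\mc_u^\p\pi+\f[(\tp^{\D_u}-I)(\ones-e_1)]^\p\pi$ for $u\in\delayseti$.

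I would first dispatch the three ``automatic'' assumptions. (A2): a $2\times2$ matrix is TP2 iff its unique second--order minor $\det\tp=\tp_{22}\ge 0$, so $\tp$ is TP2 unconditionally. (A5): since state~$1$ is absorbing, $\tp^{\D_u}\vert_{12}=0$ (trivially decreasing in $u$) and $\tp^{\D_u}\vert_{22}=\tp_{22}^{\D_u}$ is decreasing in $u$ because $\D_u\uparrow$ and $\tp_{22}\in[0,1]$, giving (A5-(ii)); for (A5-(i)) the case $q=1$ is the constant row-sum $1$, while the case $q=2$ is $\tp^{\D_{u+1}}\vert_{i2}-\tp^{\D_u}\vert_{i2}$, equal to $0$ at $i=1$ and $\tp_{22}^{\D_{u+1}}-\tp_{22}^{\D_u}\le 0$ at $i=2$, hence decreasing in $i$. (A1)(ii): at $\pi=e_1$ the predictor obeys $(\tp^\p)^{\D_u}e_1=e_1$, so the filter (\ref{eq:hmm}) is pinned at $e_1$; thus $\tilde V(e_1)=0$ while each continue action costs $\tilde C(e_1,u)=d\,\D_u+\mc(e_1,u)\ge 0$ plus $\tilde V(e_1)$, so stopping is optimal and $e_1\in\Stop$.

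It remains to check (A1)(i) and (A4) for the transformed cost; this is where the assertion ``no assumptions are required on the delay or stopping costs'' is established. In coordinates, $\tilde C(e_i,u)=c_u(i)+\mc_u(i)+\f[(\tp^{\D_u}-I)(\ones-e_1)]_i$, with $c_u(1)=d\,\D_u\ge c_u(2)=d\sum_{n=0}^{\D_u-1}(1-\tp_{22}^n)$ and false--alarm entry $0$ at $i=1$ and $\f(\tp_{22}^{\D_u}-1)\le 0$ at $i=2$. Hence, for \emph{every} $d,\f\ge 0$, the delay and false--alarm parts are monotone in $i$ in the common direction required by Theorem~\ref{thm:main1}, so combined with the assumed monotonicity of $\mc$ they give (A1)(i). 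For (A4), the $u$--increment differenced across the two states equals $-d\sum_{n=\D_u}^{\D_{u+1}-1}\tp_{22}^n\le 0$ (delay) and $\f(\tp_{22}^{\D_{u+1}}-\tp_{22}^{\D_u})\le 0$ (false alarm), i.e.\ each part has decreasing differences; since monotone and submodular cost vectors are closed under addition, adding an $\mc$ satisfying (A4) yields (A4) for the total. With (A1)--(A5) in force, the conclusion (\ref{eq:thres}) of Theorem~\ref{thm:main1} transfers verbatim.

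The main obstacle is precisely this last verification, because the raw delay cost (concentrated on the absorbing target $e_1$) and the raw false--alarm cost (which vanishes at $e_1$) are monotone in \emph{opposite} directions, so (A1)(i) fails outright for the costs (\ref{eq:qdcosts}) as written. The coordinate shift to $\tilde C(\pi,0)=0$ is therefore indispensable: it folds the terminal false--alarm cost into the running continue cost through the term $\f[(\tp^{\D_u}-I)(\ones-e_1)]^\p\pi$, after which the false--alarm contribution lines up with the delay contribution in the single monotone direction matched to the stopping interval $\Stop=(\pi_1^*,1]$ and the MLR order underlying Theorem~\ref{thm:main1}. Carrying out this alignment uniformly in $u$ and confirming it is robust to arbitrary $d,\f\ge 0$ is the delicate step; the submodularity half (A4) is comparatively robust and drops out of the two displayed increment computations.
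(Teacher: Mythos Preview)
Your verifications of (A2), (A5), (A1)(ii) and (A4) are correct, and the idea of shifting $V$ by an affine function of $\pi$ so as to leave $\mu^*$ unchanged is the right general move. The gap is in the verification of (A1)(i). Your shift $\tilde V(\pi)=V(\pi)-\f(\ones-e_1)^\p\pi$ makes $\tilde C(\pi,0)=0$ and, as you compute, gives for $u\in\delayseti$
\[
\tilde C(e_1,u)=d\,\D_u+\mc(e_1,u),\qquad
\tilde C(e_2,u)=d\sum_{n=0}^{\D_u-1}(1-\tp_{22}^n)+\mc(e_2,u)+\f(\tp_{22}^{\D_u}-1).
\]
But then $\tilde C(e_1,u)-\tilde C(e_2,u)=d\sum_{n=0}^{\D_u-1}\tp_{22}^n+\f(1-\tp_{22}^{\D_u})+[\mc(e_1,u)-\mc(e_2,u)]$, which is nonnegative for every $d,\f\ge 0$ whenever $\mc$ satisfies (A1). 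So $\tilde C(e_i,u)$ is \emph{decreasing} in $i$, not increasing: your shift aligns the delay and false-alarm pieces, but in the direction opposite to what (A1)(i) demands. This is not a harmless sign flip: the proof of Theorem~\ref{thm:main1} uses (A1)(i) via Theorem~\ref{lem:concave}(ii) to get $V$ MLR increasing, hence $V(T(\pi,y,u))$ increasing in $y$, which is then paired with the submodularity of $\sigma$ (from (A5-(i))) to make the second line of (\ref{eq:rhs}) nonpositive. With $\tilde C$ decreasing in $i$ you would get $V$ MLR decreasing, $V(T(\pi,y,u))$ decreasing in $y$, and the same submodular $\sigma$ forces that term to be nonnegative; the concavity bound in the last two lines of (\ref{eq:rhs}) likewise flips sign. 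So submodularity of $Q$ cannot be concluded from your transformed costs.

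The paper closes this gap with a different, dynamics-aware shift (Theorem~\ref{lem:transform}): one subtracts $\alpha C(\pi,L)$ from $V$ and \emph{adds back} its one-step expectation $\alpha\sum_y C(T(\pi,y,u),L)\sigma(\pi,y,u)$ to the running cost. For the geometric model this produces (ignoring $\mc$) $\uC(\pi,u)=d\,e_1^\p\bigl((1-\alpha)I+\alpha\tp^{\D_L}\bigr)^\p(I+\tp+\cdots+\tp^{\D_u-1})^\p\pi$, and the specific choice $\alpha=1/(1-\tp_{22}^{\D_L})$ makes the weight vector $\bigl((1-\alpha)I+\alpha\tp^{\D_L}\bigr)e_1=\ones$, so $\uC(e_i,u)$ is simultaneously increasing in $i$ (A1)(i) and submodular (A4). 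The point is that a purely static affine shift of $V$ can only translate each $C(\cdot,u)$ by a fixed linear functional of $\pi$ and can never change the sign of $C(e_2,u)-C(e_1,u)$ across all $u$; the paper's transformation routes the correction through the transition kernel, which is what actually flips the monotonicity direction.
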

\end{framed}

There are two main conclusions regarding Theorem \ref{cor:qd}. First, for constant measurement cost, Theorem \ref{cor:qd} holds
without any assumptions for Gaussians, exponentials, and several other 
 classes
 of observation distributions that satisfy (A3).
Second, the optimal strategy 
 $\mu^*(\pi)$ is monotone  in posterior $\pi(1)$ and therefore has a finite dimensional characterization. To determine the optimal strategy, one only
 needs to determine (estimate) the values of the $L$ thresholds 
 $\pi_1^*, \ldots, \pi_L^*$. These can be estimated via a simulation-based stochastic optimization algorithm. We will  give bounds for these threshold values
 in Sec.\ref{sec:main}. Fig.\ref{fig:thm1} illustrates such a monotone policy.

A short word on the proof  presented in Appendix \ref{app:main1}. It  involves
 analyzing the structure of Bellman's equation (\ref{eq:dp_alg}). It will  be shown that
 $Q(\pi,u)$  in   (\ref{eq:dp_alg}) is a submodular function (defined in Appendix \ref{app:main1}) on  the partially ordered set $[\I,\gr]$ 
 which constitutes a lattice. Here $\gr$ denotes the monotone likelihood
 ratio stochastic order defined in Sec.\ref{sec:filter}.
 For $X=2$, $\I$ is  the unit interval [0,1]  and in this case $[\I,\gr]$ is a chain (totally ordered set) and $\gr$ is equivalent to first order 
 stochastic dominance. For $X\geq 2$ considered in the next section, a similar idea is used to bound the optimal policy on $[\I,\gr]$.

{\em Remark: Interpretation of \cite{BV12}}.  We comment here briefly on the recent paper \cite{BV12} which considers quickest  detection
with measurement control where
at each time the decision is made  whether to take a measurement
or not. This
 can be formulated as our optimal sampling problem by considering the action
space $\U =\Delayset$ with sampling interval $D_i = i$ and $L$ chosen sufficiently large.
In \cite{BV12}, a different action space is chosen, namely $\{ 0 \text{ (announce change)}, m \text{ (take measurement)}, \bar{m} \text{ (no measurement)} \}$.
With this action space, \cite{BV12} shows that the optimal strategy is not necessarily monotone in the posterior. $\pi$.  However, with the
action space $\U$ defined above, Theorem \ref{cor:qd} shows that the optimal strategy indeed is monotone.

We can interpret the non-monotone optimal strategy in \cite{BV12} as follows.  
Our action 1 (sample next point) corresponds to action $m$ in \cite{BV12}, action 2 (sample after 2 points) corresponds to $(\bar{m},m)$, action 3 (sample after 3 points) corresponds to $(\bar{m},\bar{m},m)$, etc.
Reading off the monotone optimal strategy $\mu^*(\pi) \in 
\{3, 2,1\}$ versus $\pi$ using the action
space of \cite{BV12}  yields strategy $\{\bar{m},\bar{m},m, \bar{m},m, m\}$ which
is non-monotone, due to presence of the action $\bar{m}$ sandwiched between two $m$'s.\footnote{\cite{BV12} also contains a very nice performance analysis of sub--optimal and nearly optimal strategies. This analysis may be applicable to our setup due to similarity of the models.}

\subsection{Discussion of Assumptions A1-A5}\label{sec:discussion}  To illustrate the assumptions of Theorem \ref{thm:main1}, we will now prove Theorem \ref{cor:qd}
by showing that assumptions  that (A1-A5)  hold.
 Recall  from (\ref{eq:phmatrix}) that for quickest detection with geometric change time, the transition matrix is 
\beq \tp = \begin{bmatrix} 1 & 0 \\ 1-\tp_{22} & \tp_{22} \end{bmatrix}.  \text{ So } \;
\tp^{D_u} = \begin{bmatrix} 1 & 0 \\ 1-\tp^{D_u}_{22} & \tp^{D_u}_{22} \end{bmatrix} . \label{eq:tpg}\eeq
\subsubsection{Assumption (A1)(i)} This
requires the elements of the cost vector to be increasing. However, in quickest detection, the instantaneous cost $c(e_i,u)$ for $u\geq 1$  defined in (\ref{eq:qdcosts}) is {\em decreasing} in $i$ if the measurement cost $m$
 is a constant. 
%Actually  it is easily seen from (\ref{eq:qdcosts}) that the delay cost $c(e_i,u)$ is {\em decreasing} in $i$. 
But all is not lost.
Remarkably, a  clever transformation can be applied to make a transformed version  of the cost  {\em increase} with $i$ and yet 
ensure (A4) holds and keep the optimal strategy unchanged! This transformation is crucial for proving Theorem~\ref{cor:qd}, particularly for constant measurement cost. (Assuming a measurement cost increasing in the state $i$ makes
the proof  easier but may be unrealistic in applications).
We define this transformation via the following theorem - it exploits the special structure of the quickest detection problem.

\begin{theorem} \label{lem:transform}
Consider the quickest time detection problem with costs defined in  (\ref{eq:qdcosts}).
Assume the sampling cost $m(e_i,u)$ is a constant. Define the transformed costs $\uC(\pi,u)$  as follows:
\begin{align}  \uC(\pi,0) &= C(\pi,0) - \alpha C(\pi,L),  \nonumber \\
\uC(\pi,u) &=  C(\pi,u) - \alpha C(\pi,L) + \alpha \sum_y C(T(\pi,y,u),L) \sigma(\pi,y,u), \quad u \in \delayseti 
\label{eq:transform}
\end{align}
for any constant $\alpha \in \reals$.
Then:\\
(i) Bellman's equation (\ref{eq:dp_alg}) applied to optimize the    global objective (\ref{eq:cost}) with transformed costs $\uC(\pi,u)$
yields the same optimal strategy  $\mu^*(\pi)$ as the global objective with  original costs $C(\pi,u)$.
\\
(ii) Choosing $\alpha = 1/(1-\tp_{22}^{D_u})$  implies $\uC(e_i,u)$ satisfies (A1) and (A4).
 \qed
\end{theorem}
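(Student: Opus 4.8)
\emph{Part (i).} The plan is to read the transformation (\ref{eq:transform}) as potential-based cost shaping with potential $g(\pi)=\alpha\,C(\pi,L)$: the stop cost becomes $\uC(\pi,0)=C(\pi,0)-g(\pi)$ and each continue cost becomes $\uC(\pi,u)=C(\pi,u)-g(\pi)+\sum_y g(\Tp(\pi,y,u))\,\sigp(\pi,y,u)$. I would then guess the candidate value function $\uV(\pi)=V(\pi)-g(\pi)$, where $V$ solves the original Bellman equation (\ref{eq:dp_alg}), and verify it by direct substitution. For $u\in\delayseti$ the shaping correction $\sum_y g(\Tp)\sigp$ exactly cancels the $-g$ introduced inside the continuation value $\sum_y\uV(\Tp)\sigp=\sum_y[V(\Tp)-g(\Tp)]\sigp$, leaving $\underline{Q}(\pi,u)=Q(\pi,u)-g(\pi)$; the same identity is immediate for $u=0$. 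Since $g(\pi)$ does not depend on the action, $\arg\min_u\underline{Q}(\pi,u)=\arg\min_u Q(\pi,u)$, so the transformed and original problems share the optimal strategy $\mu^*$, and $\uV=V-g$ is the transformed optimal value function. This step is routine once the shaping structure is recognized.

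\emph{Part (ii), reduction to cost vectors.} The observation I would exploit is that the observation matrix is stochastic when summed over $y$: since $\sum_y P(y\mid x)=1$ we have $\sum_y B_y=I$. Because the quickest-detection costs (\ref{eq:qdcosts}) are linear, $C(\pi,u)=C_u^\p\pi$, and $\Tp(\pi,y,u)\sigp(\pi,y,u)=B_y(\tp^\p)^{D_u}\pi$, the shaping correction collapses to $\sum_y C(\Tp(\pi,y,u),L)\,\sigp(\pi,y,u)=C_L^\p\big(\sum_y B_y\big)(\tp^\p)^{D_u}\pi=(\tp^{D_u}C_L)^\p\pi$. Hence the transformed costs stay linear, $\uC(\pi,u)=\uC_u^\p\pi$, with
\[ \uC_u=C_u-\alpha\,C_L+\alpha\,\tp^{D_u}C_L,\quad u\in\delayseti,\qquad \uC_0=C_0-\alpha\,C_L, \]
so it suffices to evaluate the two entries $\uC_u(1),\uC_u(2)$.

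\emph{Part (ii), pinning down $\alpha$.} Using the explicit geometric form (\ref{eq:tpg}), the constant-measurement assumption $\mc_u=m\ones$ (whence $\tp^{D_u}\mc_u=m\ones$ as $\tp^{D_u}$ is stochastic), and $c_u=d(I+\tp+\cdots+\tp^{D_u-1})e_1$, I would compute the entries directly. The first entry is forced to $\uC_u(1)=dD_u+m$: because $e_1$ is absorbing, row $1$ of $\tp^{D_u}$ is $[1\ 0]$, so $(\tp^{D_u}C_L)(1)=C_L(1)$ and the two $\alpha$-terms cancel. A short calculation for the second entry gives
\[ \uC_u(2)-\uC_u(1)=d\,s_u\big[\alpha(1-\tp_{22}^{D_L})-1\big],\qquad s_u=\textstyle\sum_{k=0}^{D_u-1}\tp_{22}^k>0, \]
with $s_u$ increasing in $u$. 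Now (A1)(i) requires $\uC_u(2)\ge\uC_u(1)$, i.e.\ $\alpha\ge 1/(1-\tp_{22}^{D_L})$, while submodularity (A4) requires $\uC_u(2)-\uC_u(1)$ to be decreasing in $u$, which (as $s_u$ increases) forces $\alpha(1-\tp_{22}^{D_L})-1\le 0$, i.e.\ $\alpha\le 1/(1-\tp_{22}^{D_L})$. The two opposing requirements meet at the single value $\alpha=1/(1-\tp_{22}^{D_L})$ (built from the largest delay $L$ that defines the potential), at which the continue costs become state-independent and (A1)(i), (A4) both hold with equality. The stop action also obeys (A1)(i), since $\uC_0(2)-\uC_0(1)=\f+\alpha\,d\,s_L\ge 0$. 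Finally (A1)(ii) comes for free: by Part (i) the stopping set is unchanged, and $e_1\in\Stop$ already for the original problem because $e_1$ is absorbing with zero stopping cost and nonnegative continuation cost.

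\emph{Main obstacle.} The substantive point is pinning $\alpha$: the nontrivial fact is that one constant simultaneously balances (A1)(i), which pushes $\alpha$ up, against (A4), which pushes it down, so the choice is forced rather than merely convenient. The identity $\sum_y B_y=I$ together with linearity of the costs is exactly what makes this bookkeeping exact; note that the cancellation producing $\uC_u(1)=dD_u+m$ relies on the measurement cost being state-independent, which is why that hypothesis is imposed.
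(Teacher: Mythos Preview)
Your proposal is correct and follows essentially the same route as the paper's proof: Part (i) is the same value-function shift $\uV=V-\alpha C(\cdot,L)$ argument, and Part (ii) reaches the identical pair of opposing inequalities $\alpha\ge 1/(1-\tp_{22}^{D_L})$ from (A1) and $\alpha\le 1/(1-\tp_{22}^{D_L})$ from (A4), forcing equality. The only cosmetic differences are that the paper sets the constant measurement cost to zero and writes $\uC(\pi,u)$ in the factored form $d\,e_1^\p\big((1-\alpha)I+\alpha\tp^{D_L}\big)^\p(I+\tp+\cdots+\tp^{D_u-1})^\p\pi$, reading off the condition from the vector $\big((1-\alpha)I+\alpha\tp^{D_L}\big)e_1=[1,\ \alpha(1-\tp_{22}^{D_L})]^\p$, whereas you compute the two entries directly via $\sum_yB_y=I$ and the geometric-series identity $s_L(1-\tp_{22}^{D_u})=(1-\tp_{22}^{D_L})s_u$; both computations are equivalent.
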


 Theorem \ref{lem:transform} is proved in Appendix \ref{app:transform}.
It 
 asserts that the transformed costs $\uC(e_i,u)$  satisfies (A1) and (A4) even for constant measurement cost. 
Therefore Theorem \ref{thm:main1} holds and the optimal strategy for the transformed costs is monotone in the posterior distribution.
Note that  Theorem \ref{lem:transform} also says that  the optimal
strategy $\mu^*(\pi)$ is unchanged by this transformation. Thus  Theorem \ref{thm:main1}  holds for the original quickest detection costs,
thereby proving Theorem \ref{cor:qd}.

 {\em Assumption (A1)(ii)} is natural for the stopping problem to be well defined. It says that if it was known with certainty that the target state $e_1$
has been reached, then it is optimal to  stop. For quickest time detection it holds trivially since $C(e_1,0) \leq  C(\pi,u)$ for  $u\in \{1,\ldots,L\}, \pi \in \I$.

\subsubsection{Assumption
(A2)}  From  the structure of transition matrix $\tp$ in (\ref{eq:tpg}),  clearly (A2) holds automatically for the quickest detection problem.  For numerous examples of TP2 transition matrices, see \cite{KR80}. Also,  $\tp$ does not need to have an
absorbing state for Theorem \ref{thm:main1} to hold.

\subsubsection{Assumption (A3)} Numerous continuous and discrete  noise distributions satisfy the TP2 property, see \cite{KR80}. Examples include Gaussians, Exponential, Binomial, Poisson, etc.
Examples of discrete observation distribution  satisfying (A3) include
binary erasure channels
-- see   Sec.\ref{sec:numerical}.  A binary symmetric channel with error probability less than 0.5 also satisfies (A3).
%\\ (ii) 
%{Quantized Gaussian noise}. Define %the observation matrix $P(y|x)$  as
%defined by the $S\times M$ matrix
%\beq P(y|x=i) = \frac{\bar{b}_{iy}}{\sum_{y=1}^Y \bar{b}_{iy}} \text{ where } 
% \bar{b}_{iy} = \frac{1}{\sqrt{2 \pi \Sigma}}
%\%exp \biggl( - \frac{1}{2} \frac{(y - g_i)^2}{ \Sigma} \biggr) 
%\label{eq:gaussnoise}\eeq
%Here $\Sigma \geq 0$ denotes the noise variance  and thus
%reflects the quality of the measurements. $g_i$  denotes the mean.
%It is easily verified that  
 %(A3) holds.
 
\subsubsection{Assumption (A4)} In general Theorem \ref{thm:main1} requires the costs $C(e_i,u)$ to be submodular. 
However, for the special case of quickest detection with optimal sampling, from Theorem \ref{lem:transform} shows that only
the measurement cost $\mc(e_i,u)$ needs to be submodular, i.e., $m(e_i,u+1)-m(e_i,u)$ is decreasing in $i$.  This holds trivially  if the measurement cost
is independent of the state.

\subsubsection{Assumption
(A5)} This is a submodularity condition on the transition matrix.  Since from (\ref{eq:tpg}) $\tp^{D_u}\vert_{21} = 0$ and $\tp^{D_u}\vert_{22} =  \tp_{22}^{D_u}$, clearly (A5) holds automatically for the quickest detection problem with optimal sampling.

\section{Myopic  Bounds to Optimal  Strategy for multi-state Markov Chain} \label{sec:main}
This section considers the optimal sampling problem for multi-state Markov chains ($X\geq 2$).
Recall that multi-state Markov chains can model PH-distributed change times (\ref{eq:phmatrix}) in quickest detection problems.
 Theorems \ref{thm:myopic} and \ref{cor:qd2} are the main results of this section.  They characterize the structure of
optimal strategy $\mu^*(\pi)$ which is the solution of Bellman's equation (\ref{eq:dp_alg}).

 Define the following ordering of two arbitrary transition matrices $\tpone$ and $\tptwo$: 
\beq \tpone\succeq \tptwo \text{ if } \tpone_{ij} \tptwo_{m,j+1} \leq \tptwo_{ij} \tpone_{m,j+1}, \quad i,j+1,m \in \X. \label{eq:mor}\eeq

The following are the main assumptions used in this section:
\begin{compactitem}
\item[{\bf (A6)}] The transition matrix satisfies $\tp \succeq \tp^{2}$ where the ordering $\succeq$ is defined in (\ref{eq:mor}).
\item[{\bf (A7)}] There exist a positive constant $\alpha$ satisfying \\ (i) $\alpha \geq f/(d(1-\tp_{21}))$ \\
(ii) $\sum_{l=1}^{D_u-1} \tp^l|_{i1} + \alpha (\tp_{i1} - \tp^{D_u+1}|_{i1}) $ decreasing in $i=2,\ldots,X$.
\end{compactitem}

(A6) and (A7) are discussed at the end of this section.
(A6) and (A7) hold trivially for the two state Markov  chain case ($X=2$) with absorbing state when $\tp$ is of the form (\ref{eq:tpg}). Examples for $X\geq 3$ are given in Sec.\ref{sec:numerical}.

\subsection{Myopic Lower Bound to Optimal Policy}
 For  multi-state Markov chain observed in noise, determining sufficient conditions for the
optimal strategy to have a  monotone structure
 is an open problem.  
In this section we show that the optimal sampling strategy $\mu^*(\pi)$ is lower bounded by a  myopic strategy.
Define the  myopic strategy  ${\umu}(\pi)$  and myopic stopping set $\uStop $ by
\begin{align} {\umu}(\pi) &= \arg\min_{u \in \U} C(\pi,u)    \nonumber\\
 \label{eq:ustop}
\uStop =   \{\pi \in \I : \umu^*(\pi) = 0\} &=  \{\pi \in \I:  C(\pi,0) <  C(\pi,u), \; u \in \delayseti \} \end{align}
So $\uStop$ is the set of belief states for which  the myopic strategy declares stop.
 
The following is the main result of this section. The proof is in Appendix \ref{app:myopic}.

 \begin{framed}
\begin{theorem} \label{thm:myopic}
Consider the sequential sampling  problem of Sec.\ref{sec:qdbasic} with optimal strategy specified by (\ref{eq:dp_alg}). Then
\begin{enumerate}
\item The stopping set $\Stop$ defined in (\ref{eq:stopset}) is a convex subset of the belief state space $\I$.
\item $\uStop \subset \Stop$ where $\uStop$ is the myopic stopping set defined in (\ref{eq:ustop}).
\item Under (A1), (A2), (A3), (A6),
the myopic strategy 
 ${\umu}(\pi)$ defined in (\ref{eq:ustop}) forms  a lower   bound for the optimal
strategy $\mu^*(\pi)$, i.e.,  $\mu^*(\pi) \geq {\umu}(\pi)$ for
all $\pi \in \I - \Stop$. 
\item If (A4) holds, then the myopic strategy is ${\umu}(\pi)$ is increasing with $\pi$ (with respect
to the monotone likelihood ratio (MLR) stochastic order to be defined in Sec.\ref{sec:filter}).  %\qed
\end{enumerate} 
\end{theorem}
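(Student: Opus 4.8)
The plan is to handle the four parts in turn, with Part~3 carrying the real content.

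\emph{Parts 1 and 2.} I would first record that the value function $V$ is concave on $\I$. This is immediate from $V(\pi)=\inf_{\mu}J_\mu(\pi)$ together with the fact that, for each fixed stationary strategy, $J_\mu(\pi)$ is linear in the initial belief (the expectation in (\ref{eq:cost}) is linear in $\pi_0$); an infimum of linear functions is concave. Concretely, writing $V(\pi)=\inf_\alpha \alpha^\p\pi$ and $Q(\pi,u)=C(\pi,u)+W(\pi,u)$ with $W(\pi,u)=\sum_{y}V(T(\pi,y,u))\sigma(\pi,y,u)$, each summand equals $\inf_\alpha \alpha^\p B_y(\tp^\p)^{D_u}\pi$ and is concave in $\pi$, so $W(\pi,u)$ is concave. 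Since $C(\pi,0)$ is linear, $C(\pi,0)-Q(\pi,u)$ is convex, each set $\{\pi:C(\pi,0)\le Q(\pi,u)\}$ is convex, and intersecting over $u\in\delayseti$ shows $\Stop$ in (\ref{eq:stopset}) is convex, giving Part~1. For Part~2, non-negativity of the one-step costs gives $V\ge 0$, hence $W\ge 0$; thus for $\pi\in\uStop$ we get $Q(\pi,0)=C(\pi,0)<C(\pi,u)\le C(\pi,u)+W(\pi,u)=Q(\pi,u)$ for every $u\in\delayseti$, so $\pi\in\Stop$ and $\uStop\subset\Stop$.

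\emph{Part 3, the two monotonicity facts.} The aim is to show that for $\pi\notin\Stop$ no continue action below the myopic action can be optimal. I would establish: (a) $V$ is MLR-increasing, i.e.\ $\pi_1\gr\pi_2\Rightarrow V(\pi_1)\ge V(\pi_2)$; and (b) the predictor $(\tp^{D_u})^\p\pi$ is MLR-decreasing in $u$. For (a) I would induct on value iteration: $C(\pi,u)$ is MLR-increasing by (A1)(i) (a linear functional with coefficients $C(e_i,u)$ increasing in $i$), and by Theorem~\ref{thm:filter}, under (A2),(A3) the predictor is MLR-monotone in $\pi$ and the map $\pi\mapsto\sum_y V(T(\pi,y,u))\sigma(\pi,y,u)$ is MLR-increasing whenever $V$ is; hence each $Q(\cdot,u)$, and so $V=\min_u Q$, is MLR-increasing. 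For (b) I would use (A6), $\tp\succeq\tp^2$, to propagate to $\tp^{D_u}\succeq\tp^{D_{u+1}}$ in the ordering (\ref{eq:mor}) (as $D_u<D_{u+1}$), and then invoke the final assertion of Theorem~\ref{thm:filter}: a larger transition matrix in the order (\ref{eq:mor}) yields an MLR-larger predictor.

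\emph{Part 3, conclusion.} Writing $W(\pi,u)=H\big((\tp^{D_u})^\p\pi\big)$ with $H(\bar\pi)=\sum_y V(B_y\bar\pi/\ones^\p B_y\bar\pi)\,\ones^\p B_y\bar\pi$, fact (a) and (A3) make $H$ MLR-increasing, and combined with (b) this shows $W(\pi,u)$ is decreasing in $u$. Now fix $\pi\in\I-\Stop$ and let $a=\umu(\pi)$. For any continue action $u<a$ we have $C(\pi,a)\le C(\pi,u)$ (definition of $\umu$) and $W(\pi,a)\le W(\pi,u)$ (monotonicity in $u$), hence $Q(\pi,a)\le Q(\pi,u)$; every action below $a$ is dominated by $a$, and since $\pi\notin\Stop$ rules out $u=0$, the optimal action satisfies $\mu^*(\pi)\ge a=\umu(\pi)$. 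I expect the hard part to be exactly (a)--(b): both rest on the filtering dominance results of Theorem~\ref{thm:filter}, and the step $\tp\succeq\tp^2\Rightarrow\tp^{D_u}\succeq\tp^{D_{u+1}}$ together with closure of MLR-monotonicity under the one-step expected-cost operator is where (A2),(A3),(A6) do the real work.

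\emph{Part 4.} By (A4), $C(e_i,u+1)-C(e_i,u)$ is decreasing in $i$, so $C(\pi,u+1)-C(\pi,u)=\sum_i\big(C(e_i,u+1)-C(e_i,u)\big)\pi(i)$ is MLR-decreasing in $\pi$ (since $\gr$ implies first-order stochastic dominance and the integrand is decreasing in $i$); equivalently $C(\pi,u)$ is submodular on $[\I,\gr]\times\delayseti$. Topkis's monotone comparative statics then yields an MLR-increasing selection of $\umu(\pi)=\argmin_u C(\pi,u)$, with the stop action $u=0$ incorporated at the MLR-small end (near $e_1$) via Parts~1--2. This gives Part~4.
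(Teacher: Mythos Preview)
Your proposal is correct and follows essentially the same route as the paper. The only cosmetic differences are that you package the key monotonicity in Part~3 via the composite map $W(\pi,u)=H\big((\tp^{D_u})^\p\pi\big)$ and argue directly that actions below $\umu(\pi)$ are dominated, whereas the paper carries out the same two-step comparison on $T$ and $\sigma$ separately (using Theorem~\ref{thm:filter} parts 4 and 6) to obtain $Q(\pi,u)-Q(\pi,\bar u)\le C(\pi,u)-C(\pi,\bar u)$ and then invokes \cite[Lemma~2.2]{Lov87}; for Part~1 the paper checks convexity by the direct chain of inequalities (\ref{eq:convexregion}) rather than writing $\Stop$ as an intersection of convex sublevel sets, but both rest on the same concavity of $V$.
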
 \end{framed}

The above theorem says  that the myopic strategy  ${\umu}(\pi)$  comprising of increasing step functions
lower bounds the optimal strategy $\mu^*(\pi)$.
The myopic strategy specified by (\ref{eq:ustop}) is computed trivially on the 
 simplex $\I$.
Therefore, the above theorem gives a useful lower bound ${\umu}(\pi)$ for the optimal strategy $\mu^*(\pi)$ (which
is intractable to compute). 
  Also since ${\umu}(\pi)$ is sub-optimal, it incurs a higher cost compared to the optimal strategy. This cost associated
  with ${\umu}(\pi)$ can be evaluated by simulation and forms an upper bound to the optimal achievable cost.

\subsection{Quickest Detection with Optimal Sampling for PH-Distributed Change Time}
We now use 
Theorem \ref{thm:myopic}  to construct a myopic strategy  that upper bounds  the optimal  strategy for  quickest detection with sampling for  PH-distributed change time. 
 However, (A1) does not hold for the quickest detection costs $C(\pi,u)$ in (\ref{eq:qdcosts})
To proceed, it is convenient to define the following transformed cost $\Cb(\pi,u)$ and myopic strategy  $\bmu(\pi) $:
% as
%$\Cb(\pi,0) = C(\pi,0) + \alpha  d e_1^\p \tp ^\p \pi$,
\begin{align} \label{eq:bmu}
\Cb(\pi,0) &= C(\pi,0) + \alpha  d e_1^\p \tp ^\p \pi, \\
\Cb(\pi,u) &= C(\pi,u) + \alpha  d e_1^\p \tp ^\p \pi -  \alpha  d e_1^\p {\tp^\p}^{D_u+1}  \pi,  \quad u \in \delayseti, \nonumber\\
\quad  \bmu(\pi) &= \arg\min_{u \in \delayseti} \Cb(\pi,u) . \nonumber \end{align}

It will be shown in the proof of the theorem below, that the optimal strategy for global objective (\ref{eq:cost})
with these transformed costs $\Cb(\pi,u)$ remains unchanged and is still $\mu^*(\pi)$.

%In analogy to (A1) , we introduce the following assumption on the modified cost  $\Cb(\pi,u)$:
%\begin{itemize}
%\item[(A1')] The costs $\Cb(e_i,u)$  in (\ref{eq:bmu})  are decreasing with $i \in \X$ for each $u \in \U$.
%\end{itemize}
%Define the stopping set for the myopic strategy $\bmu$ associated with transformed cost $ \Cb(\pi,u) $ as
%\beq
%\Stopb =  \{\pi \in \I : \bmu(\pi) = 0\} = \{\pi \in \I:  \Cb(\pi,0) <  \Cb(\pi,u),  u \in \delayseti \}  \label{eq:stopb}
%\eeq

The main result regarding quickest detection with optimal sampling for PH-distributed change times is as follows. The proof is in Appendix \ref{app:qd2}.
\begin{framed}
\begin{theorem} 
\label{cor:qd2}
Consider the quickest detection  optimal sampling  problem for PH-distributed change time ($X\geq 2$) defined in Sec.\ref{sec:exqd} with
costs in (\ref{eq:qdcosts}) and  transformed costs (\ref{eq:bmu}). Then
\begin{compactenum}
\item The optimal stopping set $\Stop$  (\ref{eq:stopset}) is a convex subset of the belief state space $\I$ and contains $e_1$. 
\item   The optimal stopping set is lower  bounded
by the  myopic stopping set $\uStop$ in (\ref{eq:ustop}), i.e., 
$\uStop  \subset \Stop$.
\item Under (A2), (A3), (A6), (A7),  for $\pi \in \I - \uStop$, the myopic strategy $\bmu(\pi) $ in (\ref{eq:bmu})  upper bounds the optimal strategy, i.e., $\bmu(\pi) \geq \mu^*(\pi) \text{  for all } \pi \in \I- \uStop$. Moreover, $\bmu(\pi) $ is increasing  in $\pi$ with respect to the MLR order.
\item  For the case of geometrically distributed change time ($X=2$), 
 (A2) and (A6) hold automatically. So if (A3) holds and $\alpha$ is chosen according to (A7)(i), 
 then  $\bmu(\pi) \geq \mu^*(\pi)$ for all $\pi \in \I$. Also
  $\bmu(\pi)$
 is decreasing in $\pi(1)$. \end{compactenum}
 %(That is, the transformed cost  $\Cb(\pi,u)$ automatically satisfies (A1'), (A4) and (A2), (A6) hold automatically).
 \end{theorem}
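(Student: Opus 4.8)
The plan is to obtain Theorem~\ref{cor:qd2} as a specialization of the myopic-bound machinery of Theorem~\ref{thm:myopic} to the quickest-detection costs~(\ref{eq:qdcosts}), carried out in the transformed coordinates $\Cb(\pi,u)$ of~(\ref{eq:bmu}). Parts~1 and~2 are inherited essentially verbatim: convexity of $\Stop$ and the inclusion $\uStop\subset\Stop$ are exactly the conclusions of Theorem~\ref{thm:myopic}(1)--(2), which do not invoke (A1). The only extra claim, $e_1\in\Stop$, I would check directly from the absorbing structure~(\ref{eq:phmatrix}): since state~$1$ is absorbing, $C(e_1,0)=\f(\ones-e_1)^\p e_1=0$ whereas $C(e_1,u)=d\,D_u+\mc(e_1,u)>0$, and all future costs are nonnegative, so $Q(e_1,0)\le Q(e_1,u)$ for each $u\in\delayseti$.

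First I would establish that the transformation~(\ref{eq:bmu}) leaves $\mu^*$ unchanged, by the telescoping argument of Theorem~\ref{lem:transform}. Writing the correction as $-h(\pi)+\sum_y h(T(\pi,y,u))\,\sigma(\pi,y,u)$ with the affine potential $h(\pi)=-\alpha\,d\,e_1^\p\tp^\p\pi$, and using $\sum_y T(\pi,y,u)\,\sigma(\pi,y,u)=(\tp^\p)^{D_u}\pi$ (since $\sum_y B_y=I$), one gets $\sum_y h(T(\pi,y,u))\,\sigma(\pi,y,u)=-\alpha\,d\,e_1^\p(\tp^\p)^{D_u+1}\pi$, which reproduces~(\ref{eq:bmu}) exactly. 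The transformed $Q$-function then equals $Q(\pi,u)-h(\pi)$ uniformly over $u\in\U$ (the stop action included), so the $\arg\min$, and hence $\mu^*$, is unaffected; because $h$ is affine, convexity and MLR-monotonicity of the value function are preserved as well.

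The core is Part~3, which I would split into the bound $\bmu(\pi)\ge\mu^*(\pi)$ and the monotonicity of $\bmu$. Write the transformed Bellman $Q$-function as $\bQ(\pi,u)=\Cb(\pi,u)+F(\pi,u)$ with $F(\pi,u)=\sum_y\Vb(T(\pi,y,u))\,\sigma(\pi,y,u)\ge0$ and $F(\pi,0)=0$, where $\Vb=V-h\ge0$ is the transformed value. Since $\bmu=\arg\min_u\Cb$ while $\mu^*=\arg\min_u[\Cb+F]$, the elementary fact that augmenting an objective by a term $F(\pi,\cdot)$ that is increasing in $u$ can only weakly decrease the minimizing action yields $\mu^*(\pi)\le\bmu(\pi)$ on the continuation region; on $\Stop-\uStop$ the bound $\bmu\ge\mu^*=0$ is trivial. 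The whole bound thus reduces to showing $F(\pi,u)$ is increasing in $u$. Splitting $F(\pi,u)=\sum_y V(T(\pi,y,u))\,\sigma+\alpha d\,e_1^\p(\tp^\p)^{D_u+1}\pi$, the second term is increasing in $u$ for \emph{every} $\pi$ because state~$1$ is absorbing (so $e_1^\p(\tp^\p)^{n}\pi$ is nondecreasing in $n$), and (A7)(i), $\alpha\ge\f/(d(1-\tp_{21}))$, is exactly the threshold forcing this increasing term to dominate the possible decrease of the original future cost $\sum_y V(T)\,\sigma$. For the monotonicity of $\bmu$ on $[\I,\gr]$ I would note that (A7)(ii) makes the delay part of $\Cb(e_i,u)$ monotone in $i\in\{2,\dots,X\}$ and submodular (the $i=1$, $\tp^0$ term drops out), so that, combined with the TP2 hypotheses (A2),(A3) and the predictor comparison (A6) fed into the MLR-filter results of Theorem~\ref{thm:filter}, $\arg\min_u\Cb(\pi,u)$ is MLR-increasing.

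Part~4 is then the $X=2$ corollary: for the triangular absorbing matrix~(\ref{eq:tpg}), (A2) holds because the sole $2\times2$ minor equals $\det\tp=\tp_{22}\ge0$, (A6) ($\tp\succeq\tp^2$) is a one-line computation in~(\ref{eq:mor}), and (A7)(ii) is vacuous (only $i=2$ occurs), so with $\alpha$ fixed by (A7)(i) the bound holds on all of $\I$; since the MLR order on $\Pi(2)$ reverses $\pi(1)$, ``increasing in $\gr$'' reads as ``decreasing in $\pi(1)$.'' The step I expect to be the main obstacle is pinning down the direction of the bound in Part~3 --- that it is an upper bound $\bmu\ge\mu^*$, opposite to the lower bound of Theorem~\ref{thm:myopic}. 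The two surrogates $\umu$ (built from $C$) and $\bmu$ (built from $\Cb$) bound $\mu^*$ from opposite sides, and the reversal rests entirely on verifying that $F(\pi,u)$ is increasing in $u$ \emph{uniformly} over $\pi\in\I-\uStop$ --- not merely at the vertices $e_i$. I would isolate this as controlling the downward variation of $\sum_y V(T(\pi,y,u))\,\sigma$ via the MLR-monotonicity of $V$ and the TP2 structure, and then show the threshold $\alpha\ge\f/(d(1-\tp_{21}))$ dominates it for all $\pi$.
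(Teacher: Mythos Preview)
Your overall structure is right — Parts~1, 2, 4 and the invariance of $\mu^*$ under the affine potential shift are handled correctly and match the paper. The gap is in your argument for Part~3, specifically in how you establish that $F(\pi,u)=\sum_y\Vb(T(\pi,y,u))\,\sigma(\pi,y,u)$ is increasing in $u$.

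You split $F(\pi,u)=\sum_yV(T(\pi,y,u))\,\sigma+\alpha\,d\,e_1^\p(\tp^\p)^{D_u+1}\pi$ and claim that (A7)(i), $\alpha\ge\f/(d(1-\tp_{21}))$, is ``exactly the threshold forcing'' the second term's increase to dominate the first term's decrease uniformly over $\pi$. This is not what (A7)(i) does, and you give no quantitative bound on the downward variation of $\sum_yV(T)\,\sigma$ that would let an $\alpha$-linear term absorb it. In the paper, (A7)(i) is the condition that makes $\Cb(e_1,0)\ge\Cb(e_2,0)$; together with (A7)(ii) it ensures $\Cb(e_i,u)$ is \emph{decreasing} in $i$ for every $u$ --- i.e., $-\Cb$ satisfies (A1). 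That is its entire role.

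The paper's route avoids any domination estimate. Once $-\Cb$ satisfies (A1), the induction behind Theorem~\ref{lem:concave}(ii) (run with the sign reversed) gives that the transformed value $\Vb$ is MLR \emph{decreasing} on $\I$. Now repeat the proof of Theorem~\ref{thm:myopic}(3) verbatim: (A6) with (A2), (A3) give $T(\pi,y,u)\lr T(\pi,y,\bu)$ and $\sigma(\pi,\cdot,u)\ls\sigma(\pi,\cdot,\bu)$ for $u>\bu$; since $\Vb$ is MLR decreasing rather than increasing, both inequalities flip and one obtains $\sum_y\Vb(T(\pi,y,u))\,\sigma(\pi,y,u)\ge\sum_y\Vb(T(\pi,y,\bu))\,\sigma(\pi,y,\bu)$ directly. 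No splitting, no domination argument --- the monotonicity of $F$ in $u$ follows structurally from $\Vb$ being MLR decreasing, which in turn comes from $\Cb$ being MLR decreasing. Your proposal never establishes (or uses) that $\Vb$ is MLR decreasing, and that is the missing idea.
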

\end{framed}

{\em Discussion}: Theorem \ref{cor:qd2} gives a lot of analytical mileage in terms of bounding the optimal strategy.
 Statements (1)  characterizes  the convexity of the stopping set and  Statement (2) lower  bounds 
$\Stop$. 
Statement (3)  asserts that the optimal sampling strategy $\mu^*$ can be upper bounded for PH-distributed change times by the myopic
strategy $\bmu$.

 Statement (4) shows that for geometrically distributed change times,
the bounds on $\mu^*$ and $\Stop$ apply without requiring any assumptions apart from that on the observation distribution (A3).
In particular, Statement (4) together with Theorem \ref{cor:qd}  say that the optimal strategy $\mu^*(\pi)$ is monotone in $\pi(1)$, and upper bounds
for the threshold values $\pi_1^*, \pi_2^*,\ldots, \pi_L^*$ can be constructed using the myopic  strategy $\bmu(\pi)$. Therefore these upper bounds can be used to initialize
a stochastic optimization algorithm to estimate the thresholds of the optimal monotone policy.

{\em Discussion of Assumptions (A6) and (A7)}:
(A6) is a  sufficient to preserve monotone likelihood ratio (MLR) dominance of the belief state with a one-step predictor.
MLR stochastic order $\gr$ is defined
in Sec.\ref{sec:filter}. (A6) ensures that if two belief states satisfy $\pi_1\gr \pi_2$, then the one-step ahead
Bayesian predictor satisfies $\tp^\p \pi_1 \gr \tp^\p \pi_2$. Theorem \ref{thm:filter} in Sec.\ref{sec:filter} analyzes this
and other stochastic
dominance properties of Bayesian filters and predictors that are crucial to prove the results of this paper.

(A7) is sufficient  for the transformed cost $\Cb(\pi,u)$ defined in (\ref{eq:bmu}) to satisfy the following:
$-\Cb(\pi,u)$ satisfies (A1) and $\Cb(\pi,u)$ to satisfies (A4). The fact that $\Cb(\pi,u)$ is decreasing
(since its negative satisfies (A1)), gives an upper bound by a proof completely analogous to Theorem~\ref{thm:myopic}. 

\section{Performance and Sensitivity of Optimal Strategy}
In previous sections, we have presented structural results on monotone  optimal {\em strategies}. In comparison, this section focuses on
{\em achievable costs} attained by the optimal strategy.
This section presents  two results. First, we give bounds on the achievable performance of the optimal strategies by the decision maker.
This is done by introducing a partial ordering of the transition and observation probabilities -- the larger these parameters with respect to this order, the larger the 
optimal cost incurred. Thus we can compare models and bound the achievable performance of a computationally intractable problem.
Second, we give explicit bounds on the sensitivity of the total sampling cost with respect
to sampling model -- this bound can be expressed in terms of the Kullback-Leibler divergence. Such a robustness result
is useful since even if a model violates
the assumptions of the previous section, as long as the model is sufficiently close to a model that satisfies the conditions, then the optimal
strategy is close to a monotone strategy.

\subsection{How does total cost of the optimal sampling strategy depend on state dynamics?} \label{sec:order}

Consider the optimal sampling problem formulated in Sec.\ref{sec:qdbasic}. 
How does the  optimal expected  cost $J_{\mu^*}$ defined in (\ref{eq:mustar}) vary
  with
 transition matrix $\tp$ and observation matrix $B$? In particular, is it possible to devise an ordering for transition
 matrices and observation distributions
 such that the larger they are, the smaller the optimal sampling cost? Such a result would allow us to compare the optimal
 performance of different sampling models, even though computing these is intractable. Moreover, characterizing how the optimal
 achievable cost varies with transition matrix and observation distribution is useful.
  Recall in quickest detection the transition matrix specifies the change time distribution.
 In sensor scheduling applications the transition matrix specifies the mobility of the state. The observation matrix specifies
 the noise distribution.
 
 Consider two distinct models $\model= (\tp,B)$ and $\bmodel = (\btp,\bB)$ of the optimal sampling problem where
$\tp,\btp$ are transition matrices and $B, \bB$ are observation matrices.
Let $\mu^*(\model)$ and $\mu^*({\bmodel})$ denote, respectively, the optimal strategies for these two different models.
 Let $J_{\mu^*(\model)}(\pi;\model) =  V(\pi;\model)$ and 
$J_{\mu^*(\bmodel)}(\pi;\bmodel) =V(\pi;\bmodel)$ denote the   optimal value functions in (\ref{eq:dp_alg}) corresponding to applying the respective optimal strategies.
Recall  also that the costs in (\ref{eq:cost}) depend on the transition matrix $\tp$. So we will use the notation $C(\pi,u;\model)$ 
and $C(\pi,u;\bmodel)$ to make
the dependence of the cost on the transition matrix explicit.

Introduce the following {\em reverse Blackwell ordering} \cite{Rie91} of observation distributions. Let $B$ and $\bB$ denote
two observation distributions defined as in (\ref{eq:obs}). Then $\bB$ reverse Blackwell dominates $B$ denoted
as 
\beq \bB \bd  B \text{ if }  \bB = B \aB  \label{eq:blackwell}\eeq
where $R= (R_{lm})$ is a stochastic kernel, i.e., $\sum_mR_{lm} = 1$.  This means that $B$ yields more accurate measurements
of the underlying state than $\bB$.

{\em The question we pose is:  How does the optimal cost
$J_{\mu^*(\model)}(\pi;\model)$ vary with transition matrix $\tp$ and observation distribution $B$?} For example, in the quickest  detection optimal sampling
problem, do certain phase-type distributions result in larger total optimal cost compared to other phase-type
distributions?

\begin{framed}
\begin{theorem} \label{thm:tmove}
%Assume $({\tpone})^{u}\succeq ({\tptwo})^u$ for $u\in \delayset$, 
 Consider two optimal sampling problems with models $\model=(\tp,B)$ and $\bmodel=(\btp,\bB)$, respectively. Assume ${\tp} \succeq \btp$
 with respect to ordering (\ref{eq:mor}),
  $B \bd \bB$ with respect to ordering (\ref{eq:blackwell}), and
  (A1), (A2), (A3)  hold. Then the expected total costs incurred by the optimal sampling
  strategies  satisfy
 $J_{\mu^*(\model)}(\pi;\model) \geq J_{\mu^*(\bmodel)}(\pi;\bmodel)$.
% %V(\pi;\model)\leq V(\pi;\bmodel)$. 
That is, the larger the transition matrix and observation matrix  (with respect to the partial  ordering (\ref{eq:mor}) and (\ref{eq:blackwell})),
the lower the  expected total  cost of the optimal sampling strategy.
 \end{theorem}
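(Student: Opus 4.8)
The plan is to establish the cost ordering by showing that the optimal value function $V(\pi;\model)$ is monotone in both the transition matrix (with respect to $\succeq$) and the observation distribution (with respect to the reverse Blackwell order $\bd$), for each fixed belief state $\pi$. Since the value function is the fixed point of Bellman's equation (\ref{eq:dp_alg}), the natural strategy is a value-iteration induction: define $V_0 \equiv 0$ and $V_{n+1}(\pi) = \min_{u\in\U} Q_n(\pi,u)$ where $Q_n$ is built from $V_n$ as in (\ref{eq:dp_alg}), then show inductively that $V_n(\pi;\model) \geq V_n(\pi;\bmodel)$ for every $n$ and every $\pi$. Taking $n\to\infty$ and using uniform convergence of value iteration to the optimal value function then yields $V(\pi;\model) \geq V(\pi;\bmodel)$, which is exactly $J_{\mu^*(\model)}(\pi;\model) \geq J_{\mu^*(\bmodel)}(\pi;\bmodel)$.

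To carry out the induction I would separate the two dominance directions and handle them with the machinery of Theorem \ref{thm:filter}. For the transition-matrix direction: the costs $C(\pi,u;\model)$ depend on $\tp$ through the accumulated delay/sampling terms (see (\ref{eq:cost})), so the first task is to verify that $\tp \succeq \btp$ together with (A1)--(A3) forces $C(\pi,u;\model) \geq C(\pi,u;\bmodel)$ pointwise — intuitively, a "larger" transition matrix pushes the predicted state toward the higher-cost (non-target) states, and since the cost vector $C_u$ is increasing in the state index by (A1)(i), the inner product increases. For the recursive term, I would invoke the filtering-dominance conclusion of Theorem \ref{thm:filter}: under (A2) the predictor with the larger transition matrix MLR-dominates the predictor with the smaller one, and since $V_n$ inherits monotonicity in $\pi$ (itself an auxiliary induction hypothesis using (A1)--(A3)), the expected continuation cost is larger under $\model$. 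The Blackwell direction is handled by the standard argument that $B \bd \bB$ means $\bB$'s observations are a garbled (less informative) version of $B$'s via the kernel $\aB$ in (\ref{eq:blackwell}); a more informative observation channel yields a lower expected cost because the decision maker can do no worse with better information — formally, applying the $\model$-optimal actions inside the $\bmodel$ problem, or using a Jensen/data-processing inequality on the $\sum_y V(T(\pi,y,u))\sigma(\pi,y,u)$ term, shows the continuation cost decreases when the channel degrades.

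The main obstacle I anticipate is \textbf{the coupling of the two monotonicity properties inside a single induction}. The continuation term $\sum_{y} V_n(T(\pi,y,u))\,\sigma(\pi,y,u)$ requires simultaneously that (a) $V_n$ is decreasing/increasing in the right sense as a function of $\pi$ on the simplex under the MLR order, and (b) the post-observation belief $T(\pi,y,u)$ moves monotonically when we change $\tp$ or $B$. Property (a) is not automatic — it must be carried as a second induction hypothesis and re-established at each step, and this is precisely where the submodularity and MLR-preservation guarantees of Theorem \ref{thm:filter} do the heavy lifting. A delicate point is that changing the observation matrix alters both the filtered belief $T(\pi,y,u)$ and the normalization weights $\sigma(\pi,y,u)$ simultaneously, so one cannot treat the sum as a simple expectation against a fixed measure; the reverse-Blackwell factorization $\bB = B\aB$ must be used to rewrite the $\bmodel$-sum as an average over the $\model$-beliefs against the kernel $\aB$, after which convexity or monotonicity of $V_n$ closes the step. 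I would expect to spend most of the detailed proof verifying that these filter-level dominance relations survive one application of the Bellman operator, relying throughout on the fact (assumed available from Theorem \ref{thm:filter}) that MLR order is preserved by the Bayesian update under (A2) and (A3).
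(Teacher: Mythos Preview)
Your proposal is essentially correct and follows the same two-part value-iteration induction as the paper: Part~1 handles the transition-matrix ordering using MLR-monotonicity of $V_n$ together with the filter-dominance statements of Theorem~\ref{thm:filter} (Statements~2, 4, 5, 6) to push the inequality through $C(\pi,u;\cdot)$ and the continuation term; Part~2 handles the observation ordering via concavity of $V_n$ and Jensen's inequality applied to the channel factorization.

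Two small corrections. First, you have the Blackwell direction reversed: by (\ref{eq:blackwell}), $B \bd \bB$ means $B = \bB\,R$, so it is $B$, not $\bB$, that is the garbled channel --- this is precisely why model $\model$ ends up with the \emph{higher} cost. Second, the ``coupling of inductions'' obstacle you anticipate does not arise in the paper: the needed auxiliary properties of $V_n$ (MLR-monotonicity in $\pi$ for Part~1, concavity in $\pi$ for Part~2) are invoked from the separate Theorem~\ref{lem:concave} rather than carried as a second hypothesis, so each part is a clean single induction. Submodularity plays no role here.
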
 \end{framed}

The proof is in Appendix \ref{app:tmove}.
Computing the optimal strategies of a POMDP and therefore optimal costs
 is  intractable. Yet, the above
theorem allows us to compare these optimal  costs for different transition and observation matrices.  The implication for quickest  detection with optimal sampling  is that
we can compare the optimal cost for different PH-distributed change times and noise distributions. The implication for sensor scheduling is that we can 
apriori say that certain state dynamics incur a larger overall cost compared to other dynamics and noise distributions.

As a trivial consequence of the theorem, the optimal cost incurred with perfect measurements is always smaller than
that with noisy measurements. Since the optimal sampling problem with perfect measurements is a full observed MDP (or equivalently,
infinite signal to noise ratio), the 
corresponding optimal cost forms a easily computable lower bound to the achievable cost.

\subsubsection*{Examples} Here are examples of transition matrices $\tp ,\btp$ that satisfy (A3) and $\tp \succeq \btp$. \\
{Example 1}. Geometric distributed change time: $\tp = \begin{bmatrix} 1 & 0 \\ 1- \tp_{22} & \tp_{22} \end{bmatrix}$,
$\btp = \begin{bmatrix} 1 & 0 \\ 1- \btp_{22} & \btp_{22} \end{bmatrix}$  where $  \tp_{22} < \btp_{22} $.\\
{Example 2. } PH-distributed change time:  $\tp = \begin{bmatrix} 
1 & 0 & 0 \\ 0.5 & 0.3 & 0.2 \\
0.3 & 0.4 & 0.3 \end{bmatrix}$, $  \btp = \begin{bmatrix}
1 & 0 & 0 \\ 0.9 & 0.1 & 0 \\ 0.8 & 0.2 & 0 \end{bmatrix} $.\\
{Example 3. } Markov chain without absorbing state: $ \tp = \begin{bmatrix} 0.2  & 0.8 \\ 0.1 & 0.9 \end{bmatrix} , \quad \btp =  \begin{bmatrix}  0.8 & 0.2 \\ 0.7 & 0.3 \end{bmatrix}$
 
  Theorem \ref{thm:tmove} applies to all these examples implying that the total cost of the optimal policy with $\tp$ is larger than that with
  $\btp$.

\subsection{Sensitivity to Mis-specified Model}
How sensitive is the total sampling cost to the choice of sampling strategy?
Given two distinct models $\model= (\tp,B)$ and $\bmodel = (\btp,\bB)$ of the optimal sampling problem,
Theorem  \ref{thm:tmove} above compared their optimal costs -- it showed that  $\model \succeq \bmodel \implies J_{\mu^*(\model)}(\pi;\model) \leq J_{\mu^*(\bmodel)}(\pi;\bmodel)$,  where
$\mu^*(\model), \mu^*(\bmodel)$ denote the optimal sampling strategies for models $\model,\bmodel$, respectively.
(where  the ordering $\succeq$ is specified in Sec.\ref{sec:order}). In this section, we establish the following type of sensitivity result 
(where the norm $\|\cdot\|$ is defined in Theorem \ref{thm:sens} below):
\beq
\sup_{\pi\in \I} |J_{\mu^*(\model)}(\pi;\model) - J_{\mu^*(\model)}(\pi;\bmodel)| \leq
K\|\model - \bmodel\|  \label{eq:sens0}\eeq
and,
we will give an explicit representation for the positive constant $K$ in Theorem \ref{thm:sens} below.

 Note the key difference between (\ref{eq:sens0}) 
and  Theorem \ref{thm:tmove}.
In (\ref{eq:sens0}), we are applying the optimal strategy $\mu^*(\model)$ for model   $\model$ 
to the decision problem with a different model  $\bmodel$.
Of course, this results in sub-optimal behavior. But we will show that if the ``distance'' between the two models
$\model,\bmodel$  is small, then the sub-optimality is small --  that is,  the increase in total cost by using
the strategy $\mu^*(\model)$ to the decision problem with model $\bmodel$ is small. 

Define 
\beq \ytt = \inf\{y:   (C_{{\bu}} - C_0)^\p T(e_X,y,u;\model)  \leq 0 \text{ and }
(C_{{\bu}} - C_0)^\p T(e_X,y,u;\bmodel)  \leq 0
\;\forall u,\bu \in \delayseti \} \label{eq:ystar}\eeq
The set depicted in (\ref{eq:ystar}) represents a subset the observation
space $\Y$  for
which the optimal decision is to stop. 
We assume that 
\begin{itemize}
\item[(A7)]  $P(y \leq \ytt) > 0 $. 
\end{itemize}
Assumption  (A7) holds  trivially if the observation distribution $B_{xy}$ (defined in (\ref{eq:obs})) is absolutely continuous with respect to Lebesgue measure on $\reals$,
 i.e., if the density  has support on $\reals$ such as Gaussian noise. (A7) is relevant
 for cases when the observation space is finite or a subset of $\reals$.

\begin{framed}
\begin{theorem} \label{thm:sens} %Let $\mu^*(\model)$ denote the optimal sampling strategy using model $\model$.
Consider two optimal sampling problems  with models $\model= (\tp,B)$ and $\bmodel= (\btp,\bB)$, respectively.
 Let $J_\mus(\pi;\model)$ and
$J_\mus(\pi;\bmodel)$ denote the total costs (\ref{eq:cost}) incurred by these models % $\model= (\tp,B)$ and $\bmodel = (\btp,\bB)$,
 when using strategy $\mu^*(\model)$. Assume $\model$ and $\bmodel$ satisfy  (A2), (A3), (A4), (A7).
 Then the difference in the total costs is upper-bounded as:
\begin{align} \label{eq:sens}
& \sup_{\pi \in \I}  |J_{\mus}(\pi;\model) - J_{\mus}(\pi;\bmodel)|  \leq \max_i C(e_i,0)   \frac{\|\model-\bmodel\|}{1-\rmm} \\
\text{ where } & 
\rmm  = \max_u \sum_{y \geq \ytt} \sigma(e_X,y,u;\model) 
, \quad  
\|\model-\bmodel\| =  \max_{i,u} \sum_{j,y} \left| B_{jy}\tp^{D_u}\vert_{ij}-\bB_{jy} {\btp}^{D_u}\vert_{ij} \right|. \nonumber
 \end{align}
\end{theorem}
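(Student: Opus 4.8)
The plan is to exploit that both total-cost functions are evaluated under the \emph{same} strategy $\mu^*(\model)$, so each is the fixed point of a linear policy-evaluation operator, and then to run a contraction / Neumann-series argument in which the stopping action supplies the contraction. Write $V(\pi)=J_\mus(\pi;\model)$ and $\Vb(\pi)=J_\mus(\pi;\bmodel)$, and for each model $\nu\in\{\model,\bmodel\}$ define the operator by $(\mathcal{B}_\nu W)(\pi)=C(\pi,0)$ when $\mu^*(\model)(\pi)=0$, and $(\mathcal{B}_\nu W)(\pi)=C(\pi,u;\nu)+\sum_{y} W(\Tp(\pi,y,u;\nu))\,\sigp(\pi,y,u;\nu)$ with $u=\mu^*(\model)(\pi)$ otherwise. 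Then $V=\mathcal{B}_\model V$ and $\Vb=\mathcal{B}_\bmodel\Vb$. The key initial observation is that on the optimal stopping set $\Stop$ the two operators coincide and are model-independent, since the stopping cost $C(\pi,0)=C_0^\p\pi$ does not involve the transition matrix; hence $\Delta:=V-\Vb$ vanishes on $\Stop$.

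Next I would split $V-\Vb=(\mathcal{B}_\model V-\mathcal{B}_\model\Vb)+(\mathcal{B}_\model\Vb-\mathcal{B}_\bmodel\Vb)$. The first bracket is the linear expectation operator $\mathcal{L}_\model$ of model $\model$ applied to $\Delta$; because $\Delta\equiv 0$ on $\Stop$, only those $y$ that keep the updated belief in the continuation region $\I\setminus\Stop$ contribute, so $|\mathcal{L}_\model\Delta(\pi)|\le(\sup_\pi|\Delta|)\sum_{y:\,\Tp(\pi,y,u;\model)\notin\Stop}\sigp(\pi,y,u;\model)$. The crux here is to show the worst-case continuation probability equals $\rmm$, attained at the vertex $e_X$. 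This uses (a) convexity of $\Stop$ (Theorem~\ref{thm:myopic}) together with the threshold $\ytt$ of (\ref{eq:ystar}), which isolates the continuation event $\{y\ge\ytt\}$ from $e_X$; and (b) the MLR monotonicity of the Bayesian update under (A2),(A3),(A4) (Theorem~\ref{thm:filter}), so that $e_X$, being the MLR-smallest belief and hence farthest from the target, maximizes the probability of staying in the continuation set. Assumption (A7), $P(y\le\ytt)>0$, then forces $\rmm<1$, giving the contraction $\|\mathcal{L}_\model\Delta\|_\infty\le\rmm\,\|\Delta\|_\infty$.

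For the second bracket, the one-step model-mismatch $(\mathcal{B}_\model-\mathcal{B}_\bmodel)\Vb$, I would exploit the linear \emph{unnormalized} representation $\Tp(\pi,y,u;\nu)\,\sigp(\pi,y,u;\nu)=B_y(\tp^\p)^{D_u}\pi$, which is linear in $\pi$. Writing $\pi=\sum_i\pi(i)e_i$ and applying the triangle inequality reduces the estimate to the vertices $e_i$, where the quantity $\sum_{j,y}|B_{jy}\tp^{D_u}\vert_{ij}-\bB_{jy}\btp^{D_u}\vert_{ij}|$ appears and is maximized exactly by the definition of $\|\model-\bmodel\|$. Bounding the magnitude of $\Vb$ by the maximal stopping cost $\max_i C(e_i,0)$ (using that under $\mu^*(\model)$ the process terminates and the terminal payment is $C(\pi,0)$) supplies the prefactor, yielding $\|(\mathcal{B}_\model-\mathcal{B}_\bmodel)\Vb\|_\infty\le\max_i C(e_i,0)\,\|\model-\bmodel\|$; the running-cost difference $C(\pi,u;\model)-C(\pi,u;\bmodel)$ is absorbed by the same kernel-difference estimate.

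Combining the two brackets gives $\|\Delta\|_\infty\le\rmm\,\|\Delta\|_\infty+\max_i C(e_i,0)\,\|\model-\bmodel\|$, and since $\rmm<1$ this rearranges (equivalently, summing the Neumann series $\sum_n\mathcal{L}_\model^n$) to the claimed bound $\sup_\pi|J_\mus(\pi;\model)-J_\mus(\pi;\bmodel)|\le\max_i C(e_i,0)\,\|\model-\bmodel\|/(1-\rmm)$. I expect the main obstacle to be the second bracket: rigorously controlling the continuation-measure mismatch in spite of the nonlinearity and renormalization inside $\Vb$, and pinning down the exact prefactor $\max_i C(e_i,0)$ (in particular bounding the \emph{suboptimal} fixed-policy value function $\Vb$), together with the monotonicity argument that certifies $e_X$ as the worst-case belief for the contraction modulus $\rmm$. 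The sensitivity-in-divergence statement announced in the introduction then follows by applying Pinsker's inequality to the total-variation-type quantity $\|\model-\bmodel\|$.
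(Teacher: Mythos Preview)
Your proposal is essentially the paper's own argument: the paper also runs a policy-evaluation recursion for the fixed strategy $\mu^*(\model)$, splits the difference into a ``contraction'' piece (bounded by $\rmm\sup_\pi|\Delta|$ via the MLR monotonicity of $T$ and $\sigma$ with $e_X$ as worst case, formalized there as Lemma~\ref{lem:rpc} on the polyhedral set $\Stopb\subset\Stop$) and a ``model-mismatch'' piece (bounded by $\max_i C(e_i,0)\,\|\model-\bmodel\|$ through the $\sigma$-difference), and then unravels the resulting recursion $\|\Delta_n\|_\infty\le\rmm\|\Delta_{n-1}\|_\infty+\max_i C(e_i,0)\|\model-\bmodel\|$. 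The only cosmetic difference is that the paper works with the finite-horizon iterates $J_\mu^{(n)}$ and passes to the limit, whereas you phrase the same estimate directly at the fixed point; the obstacles you flag (bounding $\sup_\pi J_\mus(\pi;\bmodel)$ by $\max_i C(e_i,0)$ and certifying $e_X$ as the extremal belief for $\rmm$) are exactly the points the paper handles, so your plan lines up with it.
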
 \end{framed}
\begin{framed}
 \begin{corollary} \label{cor:kl}
 Consider two optimal sampling problems  with models $\model$ and $\bmodel$, respectively.
 Assume they  have identical transition matrices, but different observation distributions denoted $B,\bB$ (where $B$ is defined in (\ref{eq:obs}).
 Then bound (\ref{eq:sens}) holds with
 \begin{align}
 \|\model-\bmodel\| &=   \sqrt{2 } \max_{i,u} \sum_{j}  \tp^{D_u}\vert_{ij } \,  \left[ D(B_j \| \bB_j)\right]^{1/2} \nonumber \\
 \text{where } 
 D(B_j \| \bB_j) &= \sum_y B_{jy} \ln (B_{jy}/\bB_{jy}) \text{ (Kullback-Leibler Divergence).} \label{eq:kl}
 \end{align}
 In particular, if the observation distributions are   Gaussians with variance $\sigma^2$, $\bsigma^2$, respectively, then
 (\ref{eq:sens}) holds with
$$ \|\model-\bmodel\|  = \left(\frac{\sigma}{\bsigma} - \ln \frac{\sigma}{\bsigma}  - 1\right)^{1/2} $$
\end{corollary}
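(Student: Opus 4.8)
The plan is to obtain this corollary as a direct specialization of Theorem~\ref{thm:sens}, the only extra ingredient being Pinsker's inequality, which converts the total-variation form of the model distance $\|\model-\bmodel\|$ into a Kullback--Leibler form. Since the two models are assumed to share the same transition matrix, I would set $\tp=\btp$ throughout the definition of $\|\model-\bmodel\|$ supplied by Theorem~\ref{thm:sens} and simplify from there; the left-hand bound $\max_i C(e_i,0)/(1-\rmm)$ is inherited verbatim, so only the distance term needs work.

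First I would simplify the model distance. With $\tp=\btp$ the summand becomes $|B_{jy}\tp^{D_u}\vert_{ij}-\bB_{jy}\tp^{D_u}\vert_{ij}| = \tp^{D_u}\vert_{ij}\,|B_{jy}-\bB_{jy}|$, where pulling the factor $\tp^{D_u}\vert_{ij}$ outside the absolute value is legitimate because every entry of $\tp^{D_u}$ is nonnegative. Summing over $y$ first gives
\[
\|\model-\bmodel\| = \max_{i,u}\sum_j \tp^{D_u}\vert_{ij}\sum_y |B_{jy}-\bB_{jy}|,
\]
and I would recognize the inner sum $\sum_y|B_{jy}-\bB_{jy}|$ as twice the total-variation distance between the state-$j$ observation laws $B_j$ and $\bB_j$.

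Next I would invoke Pinsker's inequality \cite{CT06} in the form $\tfrac{1}{2}\sum_y|B_{jy}-\bB_{jy}| \leq (\tfrac12 D(B_j\|\bB_j))^{1/2}$, equivalently $\sum_y|B_{jy}-\bB_{jy}|\leq (2\,D(B_j\|\bB_j))^{1/2}$, and substitute it term by term. Because each coefficient $\tp^{D_u}\vert_{ij}$ is nonnegative, this upper bound propagates through the sum over $j$ and the maximum over $(i,u)$, yielding precisely $\|\model-\bmodel\|\leq\sqrt{2}\,\max_{i,u}\sum_j \tp^{D_u}\vert_{ij}\,[D(B_j\|\bB_j)]^{1/2}$, which is (\ref{eq:kl}); chaining this with the bound of Theorem~\ref{thm:sens} proves the first assertion.

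For the Gaussian specialization I would observe that when every state shares the same pair of observation laws with variances $\sigma^2$ and $\bsigma^2$ (equal means, differing only in variance), the per-state divergence $D(B_j\|\bB_j)$ is independent of $j$, so the stochasticity of $\tp^{D_u}$ (namely $\sum_j\tp^{D_u}\vert_{ij}=1$) collapses the coefficient sum and gives $\|\model-\bmodel\|=\sqrt{2\,D}$ with $D$ the common divergence. It then remains to insert the standard closed form of the Kullback--Leibler divergence between two equal-mean Gaussians and simplify to the stated expression. There is no genuine obstacle here: the corollary is essentially a substitution into Theorem~\ref{thm:sens} plus a single application of Pinsker's inequality. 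The only points demanding care are the nonnegativity of $\tp^{D_u}\vert_{ij}$ (needed both to factor it out of the absolute value and to let the Pinsker bound pass through the sums without sign reversal) and the row-stochasticity of $\tp^{D_u}$ used to reduce the coefficient sum to unity in the Gaussian case.
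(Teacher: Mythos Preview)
Your proposal is correct and follows essentially the same route as the paper: specialize Theorem~\ref{thm:sens} to $\tp=\btp$, factor $\tp^{D_u}\vert_{ij}$ out of the absolute value, and apply Pinsker's inequality $\sum_y|B_{jy}-\bB_{jy}|\leq\sqrt{2\,D(B_j\|\bB_j)}$ term by term. Your treatment is in fact more complete, since you spell out the Gaussian step (row-stochasticity of $\tp^{D_u}$ collapsing the $j$-sum, then the closed-form KL divergence), whereas the paper's proof stops after stating Pinsker's inequality.
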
 \end{framed}

 The proof of Theorem \ref{thm:sens} and Corollary \ref{cor:kl} are in Appendix \ref{app:sens}.  Corollary \ref{cor:kl}  follows from Theorem \ref{thm:sens} via elementary use of the Pinsker inequality
 that bounds the total variation norm by the Kullback-Leibler Divergence.
Note that  the bound in (\ref{eq:sens}) and (\ref{eq:kl}) is tight in the sense that $\|\model - \bmodel\| = 0$ 
 implies that the performance degradation is zero.  The proof is  complicated by the fact
 that  there is no discount factor\footnote{Instead of (\ref{eq:cost}), if the cost was $ J_\mu(\pizero) = \Ep\left\{\sum_{k=1}^{\kstar-1}  \rho^k C(\pi_{k},u_{k}) 
+ \rho^{\kstar} C(\pi_{{{\kstar}}}, u_{{\kstar}}=0)
 \right\}  $, where the  user defined discount factor $\rho \in [0,1)$, then establishing a bound such as (\ref{eq:sens}) is  straightforward. An artificial discount factor $\rho$ is un-natural in our problem and un-necessary as shown in Theorem \ref{thm:sens} since the problem terminates in finite time with probability one and hence has an implicit discount factor denoted as $\rmm$.}
  in the cost
 (\ref{eq:cost}). However, because the sampling problem terminates with probability one in finite time, it has an implicit
 discount factor -- this is typical in  stochastic shortest path problems that terminate in finite time \cite{Ber00b}. 
 Assumption (A7) implies that $\rmm < 1$.
 The term $1-\rmm$ can be interpreted as a lower bound to the 
 probability of stopping at any given time. Since this is non-zero,
the term $\rmm$ in (\ref{eq:sens}) serves as this implicit discount factor.

 The above result is more than an intellectual curiosity. For optimal sampling problems where
 the  transition matrix or observation distribution do not satisfy 
assumptions  (A5), (A6) or (A7) but are $\epsilon$ close  to satisfying these conditions, the above result  ensures that a monotone strategy
yields near optimal behavior. 
with explicit bound on the performance given by (\ref{eq:sens}) and (\ref{eq:kl}).

\subsection{Stochastic Dominance Properties of  the Bayesian Filter}
 \label{sec:filter}
 This section presents  structural properties of the 
Bayesian filter  (\ref{eq:hmm}) which determines the evolution of the  belief state $\pi$.
Indeed, the proofs of Theorems \ref{thm:main1}-\ref{thm:sens} presented in previous sections
depend on Theorem \ref{thm:filter} given below.
 The results in Theorem \ref{thm:filter} are also of independent interest in Bayesian filtering and prediction.
To compare posterior distributions of Bayesian filters we need to introduce stochastic orders.
We first start with some background definitions.

\subsubsection{Stochastic Orders}
 In order to compare  belief states
we will use the monotone likelihood ratio (MLR)
stochastic order.
 
\begin{definition}[MLR ordering, \cite{MS02}]
Let $\pi_1, \pi_2 \in \I$ be any two belief state vectors.
Then $\pi_1$ is greater than $\pi_2$ with respect to the MLR ordering -- denoted as
$\pi_1 \gr \pi_2$,
 if 
\beq \pi_1(i) \pi_2(j) \leq \pi_2(i) \pi_1(j), \quad i < j, i,j\in \{1,\ldots,X\}. 
\label{eq:mlrorder}\eeq
\end{definition}
Similarly $\pi_1 \lr \pi_2$ if  $\leq$ in (\ref{eq:mlrorder}) is replaced
by a $\geq$. \\
The MLR stochastic order is  useful  since it is closed  under conditional expectations.
That is, $X\gr Y$  implies $\E\{X|\F\} \gr \E\{Y|\F\}$ for any two random variables $X,Y$ and sigma-algebra $\F$
 \cite{Rie91,KR80,Whi82,MS02}. 

\begin{definition}[First order stochastic dominance, \cite{MS02}]
 Let $\pi_1 ,\pi_2 \in \I$.
Then $\pi_1$ first order stochastically dominates $\pi_2$  -- denoted as
$\pi_1 \gs \pi_2$ --
 if 
$\sum_{i=j}^X \pi_1(i) \geq \sum_{i=j}^X \pi_2(i)$  for $ j=1,\ldots,X$.
\end{definition}

The following result is well known \cite{MS02}. It says that MLR dominance implies first order stochastic dominance and gives a necessary and sufficient
condition for stochastic dominance.
\begin{theorem}[\cite{MS02}] \label{res1}
 (i)  Let $\pi_1 ,\pi_2 \in \I$.
Then $\pi_1 \gr \pi_2$ implies $\pi_1\gs \pi_2$.\\
(ii) Let $\mathcal{V}$ denote the set of all $X$ dimensional vectors
$v$ with 
 nondecreasing components, i.e., $v_1 \leq v_2 \leq \cdots
v_X$.
Then $\pi_1 \gs \pi_2$ iff for all $v \in \mathcal{V}$,
 $v^\p \pi_1 \geq v^\p \pi_2$. 
\end{theorem}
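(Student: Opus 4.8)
The plan is to prove parts (i) and (ii) separately, in each case exploiting the single-crossing structure that the likelihood-ratio order forces on the difference vector $g(i) = \pi_1(i) - \pi_2(i)$. For part (i) I would first establish a single-crossing lemma: under $\pi_1 \gr \pi_2$, the difference $g(i)$ changes sign at most once as $i$ increases, and when it does, the change is from nonpositive to nonnegative. I would prove this by contradiction. If there were indices $i < j$ with $g(i) > 0$ and $g(j) < 0$ (the forbidden ``positive-before-negative'' pattern), then $\pi_1(i) > \pi_2(i) \geq 0$ and $\pi_2(j) > \pi_1(j) \geq 0$, so in particular $\pi_1(i) > 0$ and $\pi_2(j) > 0$; chaining $\pi_1(i)\pi_2(j) > \pi_2(i)\pi_2(j) \geq \pi_2(i)\pi_1(j)$ gives a strict inequality that contradicts the defining MLR inequality (\ref{eq:mlrorder}). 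With single-crossing in hand, FOSD is immediate: writing $G_j = \sum_{i=j}^X g(i)$ and noting $G_1 = \sum_i \pi_1(i) - \sum_i \pi_2(i) = 0$, the tail sum $G_j$ is a sum of nonnegative terms once $j$ is past the crossing index, and equals $-\sum_{i<j} g(i)$, a sum of nonpositive terms, when $j$ precedes it; either way $G_j \geq 0$, which is exactly $\pi_1 \gs \pi_2$.

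For part (ii), the forward implication rests on a summation-by-parts identity. With $g(i) = G_i - G_{i+1}$ and $G_{X+1} = 0$, Abel summation gives $v^\p \pi_1 - v^\p \pi_2 = \sum_i v_i g(i) = v_1 G_1 + \sum_{i=2}^X (v_i - v_{i-1}) G_i$. Since $G_1 = 0$ the first term drops out, each increment $v_i - v_{i-1} \geq 0$ because $v \in \mathcal{V}$, and each $G_i \geq 0$ by FOSD, so the sum is nonnegative. For the converse I would feed the hypothesis the nondecreasing indicator test vectors $v^{(j)} = (0,\ldots,0,1,\ldots,1)^\p$ with ones in positions $j,\ldots,X$; for each such $v^{(j)}$ the inequality $(v^{(j)})^\p \pi_1 \geq (v^{(j)})^\p \pi_2$ reads precisely as $\sum_{i=j}^X \pi_1(i) \geq \sum_{i=j}^X \pi_2(i)$, and letting $j$ range over $\{1,\ldots,X\}$ recovers the definition of $\pi_1 \gs \pi_2$.

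The summation-by-parts bookkeeping and the index-crossing argument are routine; the one point that needs genuine care is the presence of zero components on the boundary of the simplex $\I$, where the intuitive reading ``the ratio $\pi_1(i)/\pi_2(i)$ is nondecreasing'' breaks down through division by zero. The hard part will therefore be to keep every step division-free by arguing from the cross-product form (\ref{eq:mlrorder}) throughout. This matters only in the contradiction step of the single-crossing lemma, and there it resolves cleanly: $g(i) > 0$ already forces $\pi_1(i) > 0$ and $g(j) < 0$ already forces $\pi_2(j) > 0$, so the required strict inequality survives even when some other components vanish.
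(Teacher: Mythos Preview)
Your argument is correct. The single-crossing lemma for the difference $g(i)=\pi_1(i)-\pi_2(i)$, proved by contradiction from the cross-product form (\ref{eq:mlrorder}), is the standard route to (i); your handling of the boundary case (zero components) is exactly right, since $g(i)>0$ and $g(j)<0$ already force the strict positivity you need to push the strict inequality through. The tail-sum step, splitting at the crossing index and using $G_1=0$ to rewrite $G_j=-\sum_{i<j}g(i)$ on the left side, is clean. For (ii), the Abel summation identity and the indicator test vectors $v^{(j)}$ are textbook and correct.

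Regarding comparison with the paper: there is nothing to compare. The paper does not supply its own proof of this theorem; it is quoted as a well-known result with a citation to \cite{MS02}. Your write-up is a complete, self-contained proof of a result the paper merely imports.
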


For state-space dimension $X =2$, MLR is a complete order and coincides with
first order stochastic dominance.
For state-space dimension $X >2$,
MLR is a  {\em partial order}, i.e., $[\I,\gr]$ is a partially ordered set (poset) since it is not always
possible to order any two belief states $\pi \in \I$.

\subsubsection{Main Result} With the above definitions, we are now ready to state the main result regarding the stochastic dominance
properties of the Bayesian filter. %All six parts of this theorem will be used in our proofs for characterizing the structure of the optimal
%sampling strategy.
\begin{framed}
\begin{theorem}  \label{thm:filter} The following structural properties hold for the Bayesian filtering update $T(\pi,y,u)$ and
normalization measure $\sigma(\pi,y,u)$ defined in (\ref{eq:hmm}):
\begin{enumerate}
\item 
Under (A2), $\pi_1 \gr \pi_2$ implies   $\Tp(\pi_1,y,u)\gr \Tp(\pi_2,y,u)$ holds.
\item Under (A2) and (A3),  $\pi_1 \gr \pi_2$ implies   the normalization measure satisfies
%and $\bT(Q(\pi_1),u,y) \gr \bT(Q(\pi_2),u,y)$. 
%\item
 $\sigp(\pi_1,\cdot,u) \gs \sigp(\pi_2,\cdot,u)$.
 \item Under (A3) and (A5-(i)), the normalization measure $\sigma(\pi,\cdot,u)$ satisfies the following submodular property:
$$\sum_{y\geq \bar{y}}\left[ \sigma(\pi,y,u+1)  - \sigma(\pi,y, u)\right]
 \leq \sum_{y\geq \bar{y}} \left[\sigma(\bp,y,u+1)  - \sigma(\bp,y, u)\right] \text{ for } \pi \gr\bp$$
\item 
For $y,\bar{y} \in \Y$,  $y > \bar{y}$ implies $\Tp(\pi_1,y,u)\gr \Tp(\pi_1,\bar{y},u)$ iff 
(A3) holds.
\item Consider the ordering of transition matrices $\tp \succeq \btp$ defined in (\ref{eq:mor}).
\begin{enumerate}
\item If $\tp \succeq \btp$ then ${\tp}^\p \pi \gr {\btp}^\p \pi$, that is, the one-step Bayesian predictor with transition matrix
$\tp$ MLR dominates that with transition matrix $\btp$.
\item If  $\tp \succeq \btp$ and (A2) holds, then $({\tp^l})^\p \pi \gr ({\btp^l})^\p \pi$ for any positive integer $l$. That is, the $l$-step Bayesian predictor preserves this MLR dominance.
\end{enumerate}
\item Let  $T(\pi,y,u;\tp)$ and $\sigma(\pi,y,u;\tp)$ denote, respectively, the Bayesian filter update and normalization measure
using transition matrix $\tp$.  Then they satisfy the following stochastic dominance property with respect to 
the ordering of $\tp$ defined in (\ref{eq:mor}):
\begin{enumerate}
\item   $\tp \succeq \btp$  implies $T(\pi,y,u;\tp) \gr T(\pi,y,u;\btp)$.
\item
 Under  (A3), $\tp \succeq \btp$ implies $\sigma(\pi,\cdot,u;\tp) \gs \sigma(\pi,\cdot ,u;\btp)$. 
\end{enumerate}
\end{enumerate}
\end{theorem}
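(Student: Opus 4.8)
The plan is to reduce the entire theorem to two elementary building blocks about the MLR order $\gr$ together with total positivity, and then assemble each statement almost mechanically; the only genuinely new computation is the submodularity claim~(3). The two building blocks are: \textbf{(B1)} if $D$ is a diagonal matrix with strictly positive diagonal then $\pi_1 \gr \pi_2$ implies $D\pi_1 \gr D\pi_2$, which is immediate because each defining inequality in (\ref{eq:mlrorder}) is multiplied on both sides by the same positive factor $d_i d_j$; and \textbf{(B2)} if $M$ is a TP2 stochastic matrix then $\pi_1 \gr \pi_2$ implies $M^\p \pi_1 \gr M^\p \pi_2$. For (B2) I would use the characterization that $\pi_1 \gr \pi_2$ is equivalent to the $X\times 2$ array with columns $\pi_2,\pi_1$ being TP2; since $M^\p$ is TP2 (the transpose of a TP2 matrix is TP2) and a product of TP2 matrices is TP2 (Cauchy--Binet / basic composition), the array with columns $M^\p\pi_2, M^\p\pi_1$ is TP2, i.e.\ $M^\p\pi_1 \gr M^\p\pi_2$. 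Under (A2), $\tp^{D_u}$ is a product of TP2 matrices, hence TP2, so (B2) applies with $M=\tp^{D_u}$.

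Given (B1)--(B2), statements (1), (4), (5) follow quickly. For (1), write $\Tp(\pi,y,u)\propto B_y (\tp^\p)^{D_u}\pi$; since $\sigp$ is MLR-invariant, (B2) gives $(\tp^\p)^{D_u}\pi_1 \gr (\tp^\p)^{D_u}\pi_2$ and (B1) with $D=B_y$ finishes. For (4), fix the predicted vector $q=(\tp^\p)^{D_u}\pi$; the claim $B_y q \gr B_{\bar y} q$ reduces, after cancelling the common positive factors $q(i)q(j)$, to $B_{iy}B_{j\bar y}\leq B_{i\bar y}B_{jy}$ for $i<j$, $y>\bar y$, which is exactly (A3), and the converse direction recovers these inequalities by taking $\pi$ interior, giving the ``iff''. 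For (5)(a) it suffices (by the usual likelihood-ratio argument, valid on the support) to check the MLR inequality at consecutive coordinates $j,j+1$; expanding $(\tp^\p\pi)(j)(\btp^\p\pi)(j+1) \leq (\btp^\p\pi)(j)(\tp^\p\pi)(j+1)$ and collecting terms yields $\sum_{m,i}[\btp_{mj}\tp_{i,j+1}-\tp_{mj}\btp_{i,j+1}]\pi(m)\pi(i)\geq 0$, where each bracket is nonnegative by the definition (\ref{eq:mor}) of $\tp\succeq\btp$. Statement (5)(b) is an induction on $l$: with $a=(\tp^l)^\p\pi$, $b=(\btp^l)^\p\pi$ and $a\gr b$ by hypothesis, (B2) (under (A2)) gives $\tp^\p a \gr \tp^\p b$ and (5)(a) gives $\tp^\p b \gr \btp^\p b$, so transitivity of $\gr$ yields $(\tp^{l+1})^\p\pi\gr(\btp^{l+1})^\p\pi$. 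Statement (6)(a) is identical to (1), except the predictor dominance across transition matrices now comes from (5), i.e.\ $(\tp^\p)^{D_u}\pi\gr(\btp^\p)^{D_u}\pi$, after which (B1) with $D=B_y$ applies.

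The two first-order-dominance statements (2) and (6)(b) share one argument. Let $q_1\gr q_2$ be the relevant predicted distributions (from (B2) in case (2), from (5) in case (6)(b)); by Theorem~\ref{res1}(i) this gives $q_1\gs q_2$. Writing $G(x,\bar y)=\sum_{y\geq\bar y}B_{xy}$, I would note that $\sum_{y\geq\bar y}\sigp(\pi_k,y,u)=\sum_x q_k(x)\,G(x,\bar y)$, and that (A3) makes $G(\cdot,\bar y)$ increasing in the state (TP2 of $B$ is stochastic monotonicity of its rows). Since $G(\cdot,\bar y)$ has increasing components, Theorem~\ref{res1}(ii) applied to $q_1\gs q_2$ gives $\sum_x q_1(x)G(x,\bar y)\geq\sum_x q_2(x)G(x,\bar y)$ for every $\bar y$, which is precisely the claimed $\gs$ ordering of the normalization measures.

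The main obstacle is statement (3), the submodularity of $\sigp$, which I expect to be the only part requiring genuine computation rather than an invocation of (B1)--(B2). I would set $H(\pi,\bar y,u)=\sum_{y\geq\bar y}\sigp(\pi,y,u)=\sum_i \pi(i)\,\Psi_u(i)$, where $\Psi_u(i)=\sum_x \tp^{D_u}\vert_{ix}\,G(x,\bar y)$, so the desired inequality becomes $\sum_i[\pi(i)-\bp(i)]\,\Delta(i)\leq 0$ for $\pi\gr\bp$, with $\Delta(i)=\Psi_{u+1}(i)-\Psi_u(i)$. Since $\pi\gr\bp\Rightarrow\pi\gs\bp$, Theorem~\ref{res1}(ii) reduces everything to showing $\Delta(i)$ is \emph{decreasing} in $i$. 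For this I would sum by parts: with $S_i(q)=\sum_{x\geq q}(\tp^{D_{u+1}}\vert_{ix}-\tp^{D_u}\vert_{ix})$, which is exactly the quantity (A5-(i)) asserts is decreasing in $i$ and which vanishes at $q=1$ and $q=X+1$, one obtains $\Delta(i)=\sum_{x\geq 2}\big(G(x,\bar y)-G(x-1,\bar y)\big)\,S_i(x)$; here each increment $G(x,\bar y)-G(x-1,\bar y)$ is nonnegative by (A3) and each $S_i(x)$ is decreasing in $i$ by (A5-(i)), so $\Delta$ is a nonnegative combination of decreasing functions, hence decreasing. The care points I anticipate are the consecutive-index reduction for the MLR order in (5)(a) (handled by restricting to the support, where the likelihood ratios are well defined) and the boundary bookkeeping $S_i(1)=S_i(X+1)=0$ in the summation by parts.
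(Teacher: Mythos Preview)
Your proposal is correct and matches the paper's approach: the paper defers parts (1), (2), (4) to \cite{Lov87}, handles part (3) by invoking a tail-sum lemma of Heyman--Sobel (the paper's Theorem~\ref{thm:hs}) that encodes exactly your Abel summation with $S_i(q)$ decreasing by (A5-(i)) and the increments $G(x,\bar y)-G(x-1,\bar y)$ nonnegative by (A3), and argues (5) and (6) essentially verbatim as you do. The only cosmetic difference is that your treatment is self-contained via the building blocks (B1)--(B2), whereas the paper outsources those TP2 composition facts to the literature.
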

\end{framed}
In words, 
Part~1 of the theorem implies that the Bayesian filtering recursion preserves the MLR ordering providing that  the transition matrix is TP2
(A2).  Part~2 says that the normalization measure  preserves first order stochastic dominance providing (A2) and (A3) hold. 
Part~3 shows that the normalization measure is submodular. This is a crucial property in establishing Theorem~\ref{thm:main1}.
Part~4 shows
that under (A3), the larger the observation value, the larger  the posterior distribution (wrt MLR order). 
 Part~5 shows that if starting with two different transition matrices but identical priors, then the optimal predictor with the larger transition matrix
 (in terms of the order introduced in (\ref{eq:mor})) MLR dominates the predictor with the smaller transition matrix.
Part~6 says that same thing about the filtering recursion $T(\pi,y,u)$ and the normalization measure~$\sigma(\pi,y,u)$.

\section{Numerical Examples} \label{sec:numerical}

{\em Example 1. Optimal Sampling  Quickest Detection with Binary Erasure Channel measurements}:
Consider $X=2,Y=3, L=5$, $f=17$, $d=0.4$, $\mc(e_i,1) = 0$, $\mc(e_i,2) = 2.8$,
$$   \{D_1,D_2,D_3,D_4\}  = \{1, 3, 5, 10 \},\;
\tp = \begin{bmatrix} 1 & 0 \\ 0.1 & 0.9  \end{bmatrix}, \; B = \begin{bmatrix}
0.3 & 0.7 & 0 \\ 0 & 0.2 & 0.8 \
\end{bmatrix} $$
The noisy observations of the Markov chain specified by observation probabilities $B$ models a binary non-symmetric erasure channel
\cite{CT06}.
Note that a binary erasure channel is TP2 by construction (all second order minors are non-negative) and so (A3) holds.

 The optimal strategy was computed by forming a grid of 1000 values in the 2-dimensional unit simplex, and then solving the value iteration algorithm 
 (\ref{eq:vi}) over this grid on a horizon $N$ such that $\sup_{\pi} |V_N(\pi) - V_{N-1}(\pi) | < 10^{-6}$.
Figure
 \ref{fig:thm1}(a) shows that when the conditions of Theorem \ref{thm:main1} are satisfied, the strategy is monotone decreasing in
 posterior $\pi(1)$.
To show that the sufficient conditions of Theorem \ref{thm:main1} are useful, Figure
 \ref{fig:thm1}(b) gives an example of when these conditions do not hold, the optimal strategy is no longer monotone.
 Here  $\mc(e_i,1) = 2.8$, $\mc(e_i,2) = 0$ and therefore violates (A1) of Theorem \ref{cor:qd}.

 {\em Example 2. Optimal Sampling Quickest Detection  with Gaussian noise measurements}:
Here we consider identical parameters
to Example 1 except that the observation distribution is Gaussian with $B_{1y} \sim N(1,1)$, $B_{2y} \sim N(2,1)$
and measurement costs are  $\mc(e_i,u) = 1$ for all $i\in \X, u \in \{1,2,3,4\}$.
Since the measurement cost is a constant (A1) and (A4) of Theorem~\ref{cor:qd} hold trivially. As mentioned in Sec.\ref{sec:discussion},
(A3) holds for Gaussian distribution. Therefore Theorem \ref{cor:qd} applies and the optimal strategy $\mu^*(\pi)$ is monotone
decreasing in $\pi(1)$.
Fig.\ref{fig:gp} illustrates the optimal strategy.  
Next, using Theorem \ref{cor:qd2}, the myopic strategies $\bar{\mu}(\pi)$  forms an upper bound
to the optimal strategy $\mu^*(\pi)$ for actions $u \in \{1,\ldots,4\}$.
 We used $\alpha = f/(d(1 - \tp_{21})) $ to satisfy (A7)(i)
for the myopic cost in (\ref{eq:bmu}).
 As a bound
 for the optimal stopping
 region, we used the myopic stopping set $\uStop$ defined in~(\ref{eq:ustop}).  These are plotted in Fig.\ref{fig:gp}(a).

\begin{figure}\centering
\mbox{\subfigure[Gaussian Observation Probabilities]{\epsfig{figure=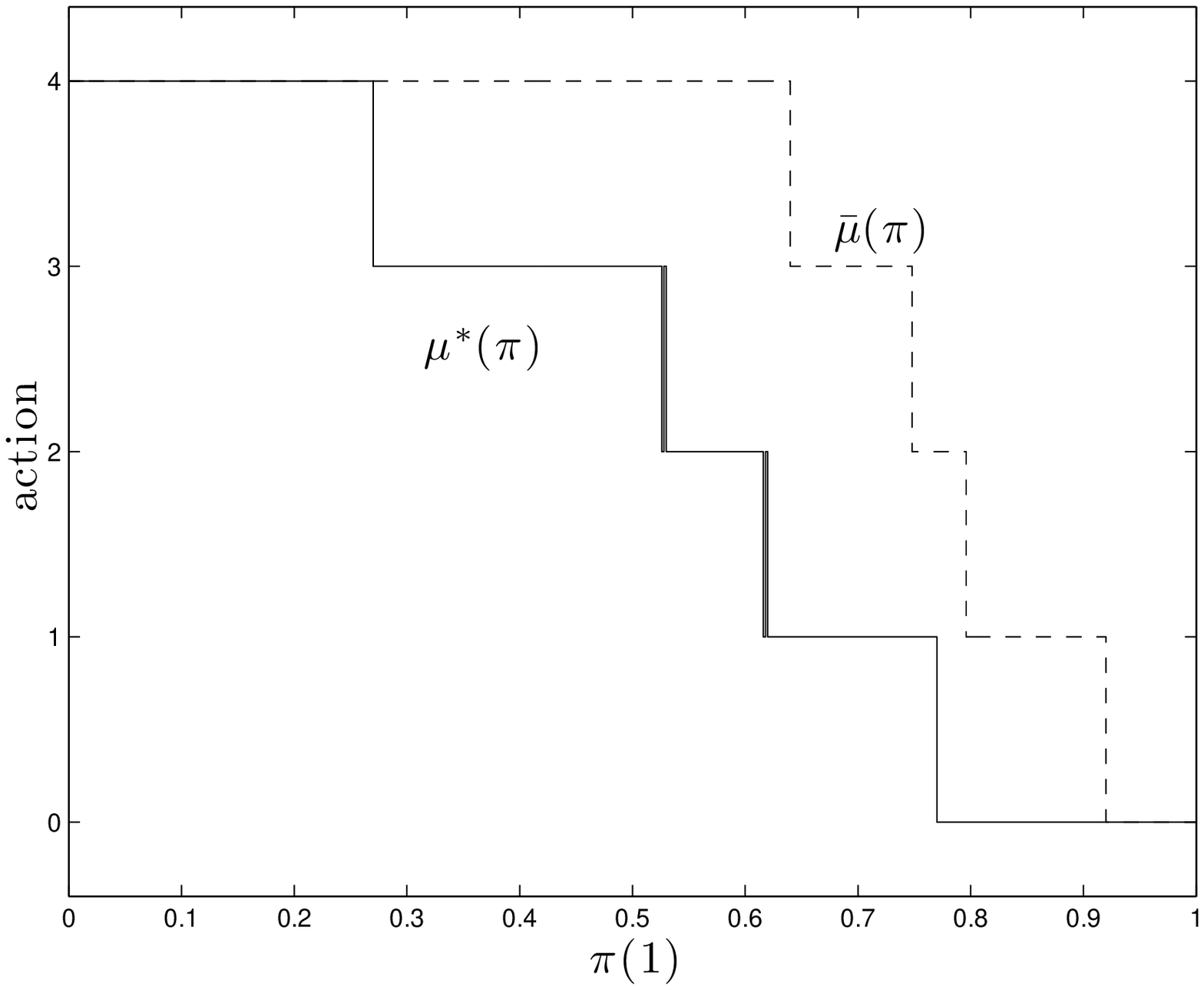,width=0.45\linewidth} } \quad
\subfigure[Poisson Observation Probabilities]{\epsfig{figure=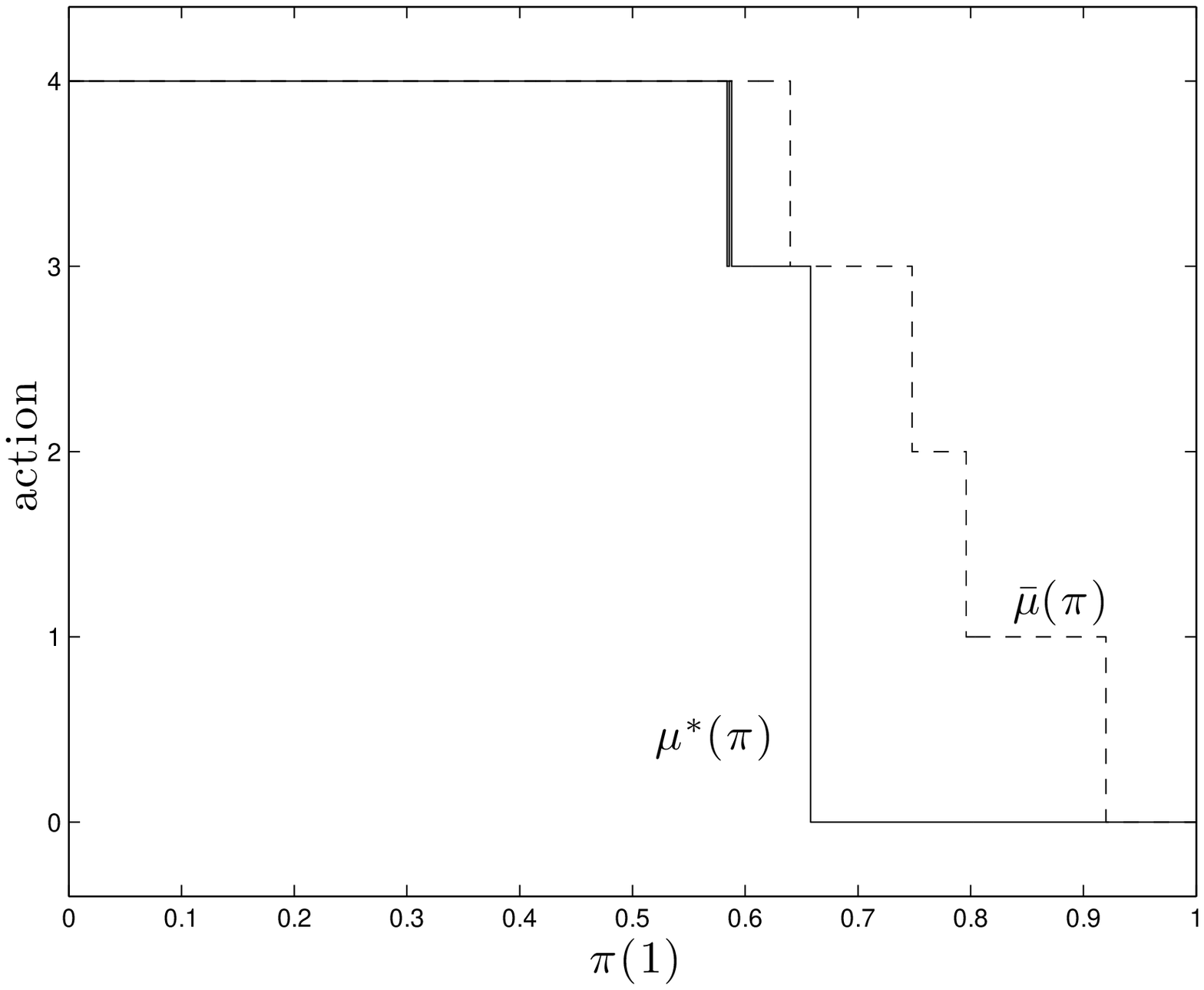,width=0.45\linewidth} }}
\caption{Optimal sampling strategy for action  $u\in \{0 \text{ (announce change)}, 1, 2, 3, 4\}$ for a quickest-change detection
problem with geometric change time. The parameters are specified in Example 2 and 3 in Sec.\ref{sec:numerical}. The optimal strategy
$\mu^*(\pi)$ is monotone decreasing in $\pi(1)$  and is 
 upper  bounded by myopic strategy $\bar{\mu}$  according to Theorem \ref{cor:qd2}.}
\label{fig:gp}
\end{figure}

{\em Example 3. Optimal Sampling Quickest Detection with Markov Modulated Poisson measurements}:
The parameters here are identical to Example 2 except that the observations are generated by a discrete time Markov Modulated Poisson
process. That is, at each time $k$, observations are generated according to the Poisson distribution 
$B_{xy} =  (\lambda_{x})^{y - 1} \frac{e^{-\lambda_x}}{(y-1)!} $ where the rates $\lambda_1 = 1$, $\lambda_2 = 1.5$. Since (A3) holds
for Poisson distribution, Theorem \ref{cor:qd} applies. Fig.\ref{fig:gp}(b) illustrates the optimal strategy. As in Example 2, the myopic strategy $\bar{\mu}(\pi)$ 
forms an upper  bound.

{\em Example 4. Optimal Sampling with Phase-Distributed Change Time}:
Here we consider optimal sampling quickest detection with PH-distributed change time.
Consider a 3-state ($X=3$) Markov chain observed in noise with parameters $f=10$, $d=0.4$, $\mc(e_i,u) = 1$, 
$$ \tp = \begin{bmatrix}  1 & 0 & 0 \\  0.7 & 0.3 & 0 \\  0.3 & 0.4 & 0.3 \end{bmatrix},
B =  \begin{bmatrix} 0.8 & 0.2 & 0 \\  0.1 & 0.8 & 0.1\\  0 & 0.1 & 0.9 \end{bmatrix},  
\{D_1,D_2,D_3,D_4\} = \{1, 2, 4, 5\}.$$
So $\I$ is a 2-dimensional unit simplex.
The optimal strategy was computed by forming a grid of 8000 values in the 2-dimensional unit simplex, and then solving the value iteration algorithm 
 (\ref{eq:vi}) over this grid on a horizon $N$ such that $\sup_{\pi} |V_N(\pi) - V_{N-1}(\pi) | < 10^{-6}$. Fig.\ref{fig:myopic}(a) shows the optimal strategy.
 
 It can be verified that the transition matrix $\tp$ satisfies (A3), (A6) and (A7) for $\alpha = 100$. Also the observation
 distribution $B$ satisfies (A2). 
Therefore Theorem \ref{cor:qd2}  holds and the optimal strategy is upper
 bounded by the myopic strategy $\bmu(\pi)$ defined in (\ref{eq:bmu}). Fig. \ref{fig:myopic}(b) shows the myopic strategy $\bmu(\pi)$. As a bound
 for the optimal stopping
 region, we used the myopic stopping set $\uStop$ defined in~(\ref{eq:ustop}). In Fig.\ref{fig:myopic}(b) these are represented by `0'.

\begin{figure}\centering
\mbox{\subfigure[Optimal Policy $\mu^*(\pi)$]{\epsfig{figure=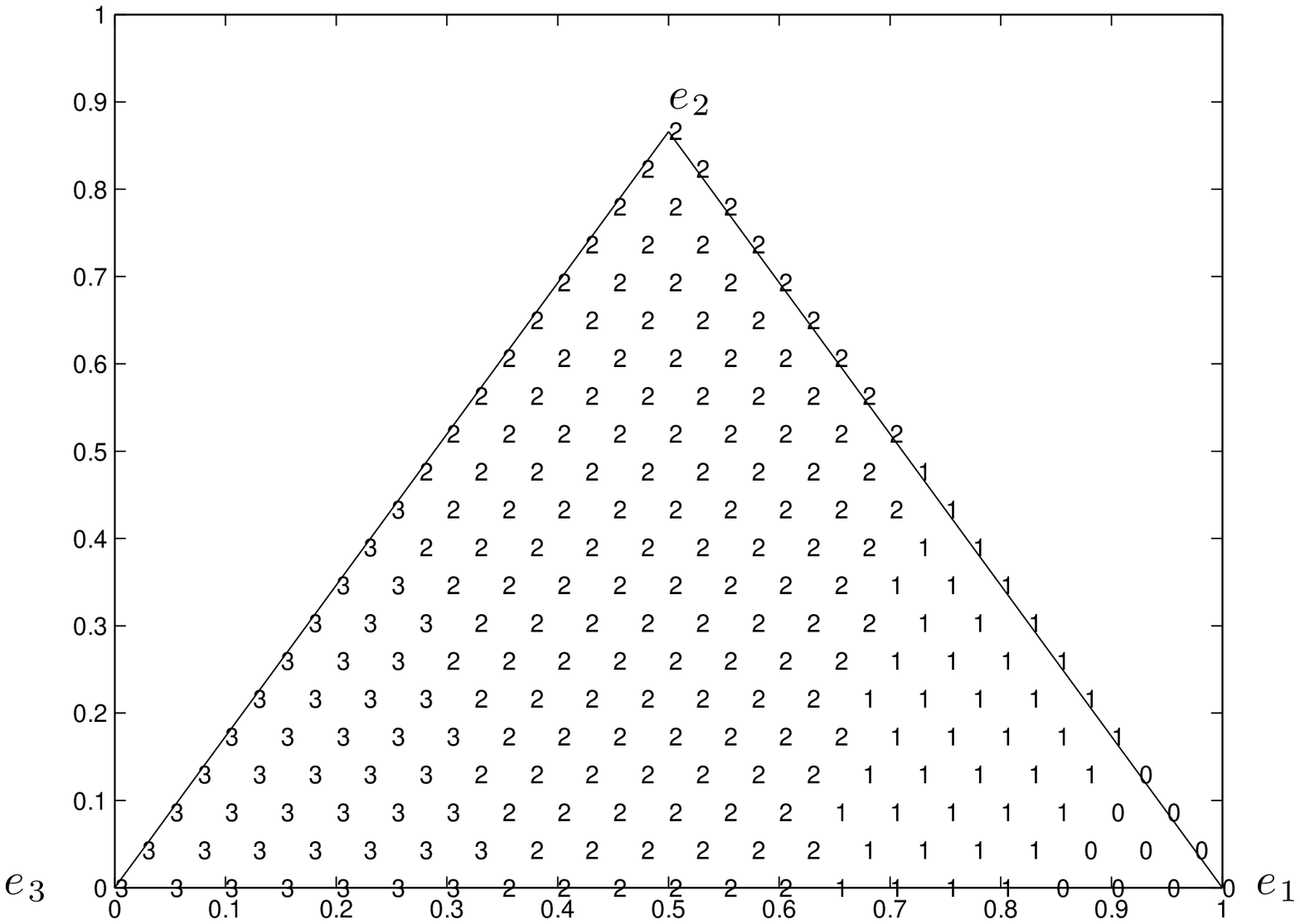,width=0.45\linewidth} } \quad
\subfigure[Myopic Upper Bound  $\bar{\mu}(\pi)$]{\epsfig{figure=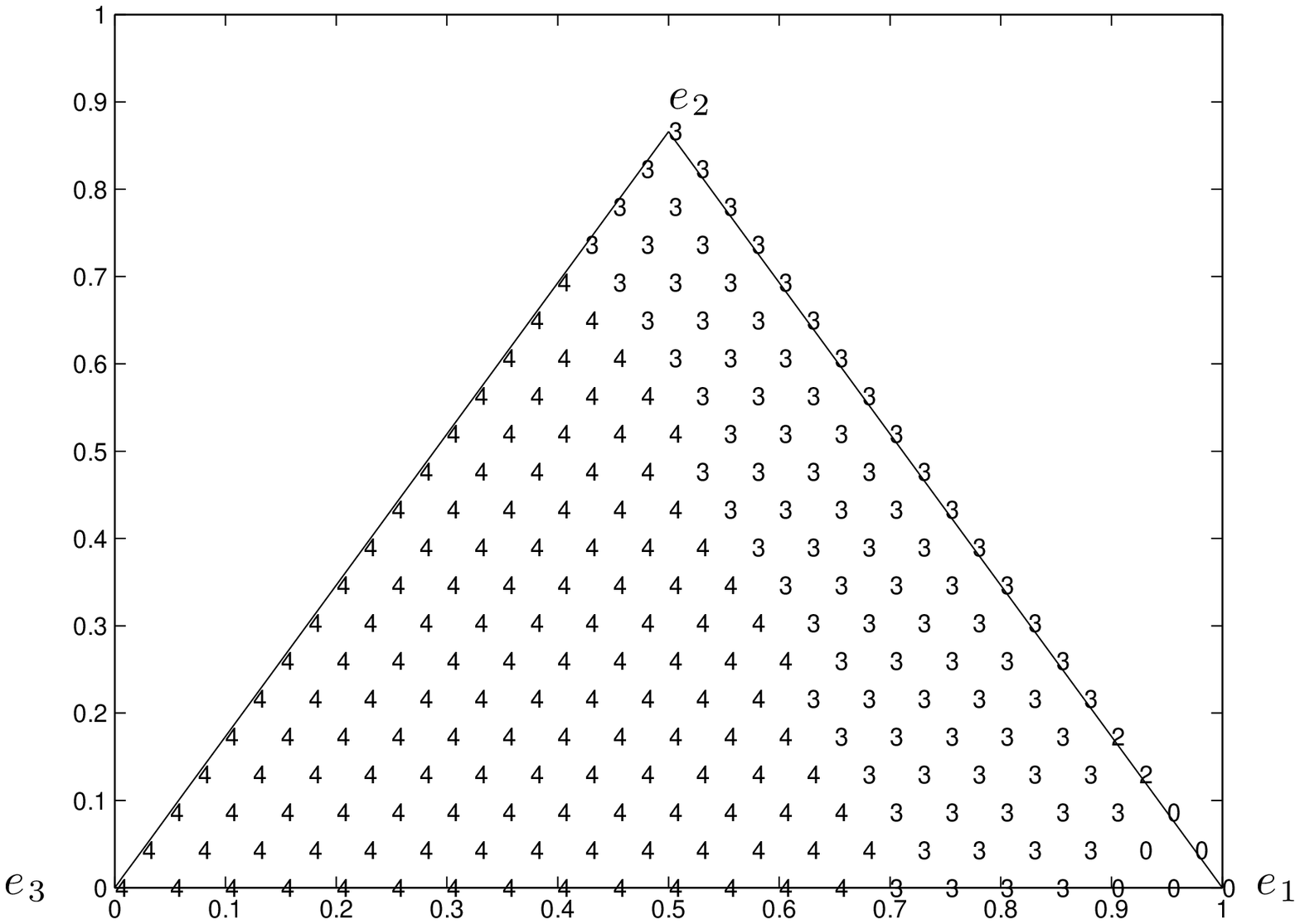,width=0.45\linewidth} }}
\caption{Optimal sampling strategy for action  $u\in \{0 \text{ (announce change)},  1 , 2, 3, 4\}$ for a quickest-change detection
problem with PH-distributed time specified by 3-state Markov chain in Example 4 of Sec.\ref{sec:numerical}. The belief space $\I$ is a two dimensional
unit simplex (equilateral triangle). The optimal strategy
is upper bounded by myopic strategy $\bar{\mu}(\pi)$  according to Theorem \ref{cor:qd2}.}
\label{fig:myopic}
\end{figure}

\section{Discussion} \label{sec:conclusions}
The paper presented  structural results for the optimal sampling strategy of a Markov chain  given noisy measurements.
An example dealing with  quickest change detection with optimal sampling was discussed to motivate the main results.
Such problems are instances of partially observed Markov decision processes (POMDPs) and computing the optimal sampling strategy is 
intractable in general. However, this paper shows that  under reasonable conditions on the sampling costs, transition matrix and 
noise distribution, one can say a lot about the optimal strategy and achievable cost using tools in stochastic dominance and lattice
programming.
There main results were:  Theorems  \ref{thm:main1} and \ref{cor:qd} gave sufficient conditions for the existence of a monotone
optimal sampling strategy (with respect to
the posterior distribution) when the underlying Markov chain had two states. It justified the intuition that one should
make measurements less frequently when the underlying state is away from the target state. Theorem \ref{thm:myopic} and Theorem
\ref{cor:qd2} gave sufficient
conditions for the myopic sampling strategy to form a lower bound or upper bound to the optimal sampling strategy for multi-state Markov chains.
Theorem \ref{thm:tmove} gave a partial ordering for the transition matrix and noise distributions so that the expected cost 
of the optimal sampling strategy decreased as these parameters increased. This yields useful information on the achievable optimal cost
of an otherwise intractable problem.  Theorem \ref{thm:sens}, gave explicit bounds on the sensitivity of the total sampling cost with respect to sampling
strategy in terms of the Kullback Leibler divergence between the noise distributions.  
Theorem \ref{thm:filter} gave several useful structural properties of the optimal Bayesian filtering update including sufficient conditions that preserve monotonicity of the filter with observation, prior distribution, transition matrix and noise
distribution. 

The assumptions (A1-A7) used in this paper are set valued; so even if the precise
parameters (transition probabilities, observation distribution, costs)  are not known, as long as they belong to the appropriate sets, the structural results hold. Thus the results have an inherent robustness.

Finally, it is interesting to note that the results derived in this paper  on sampling control do not apply to 
general  measurement control problems where
 the action affects the observation distribution rather than transition kernel.  The reason is that it is not possible to find two non-trivial stochastic matrices (kernels)
 $B$ and $\bB$ such that the belief updates satisfy  (i) $T(\pi,y;B) \gr T(\pi,y;\bB)$ and normalization
 measure satisfies (ii) $\sigma(\pi,\cdot,B) \gs \sigma(\pi,\cdot;\bB)$. In \cite{Lov87}, it is claimed that if $B$ TP2 dominates $\bB$ then
 (i) and (ii) hold. However, we have found that  the only examples of stochastic kernels that satisfy the TP2 dominance are the trivial example%\footnote{The author thanks Y. Rinott  for discussions on this matter. Rinott wrote the seminal paper \cite{KR80} with Karlin on TP2 stochastic orders.}  
  $B = \bB$.
 In our paper, which deals with sampling control, the ordering (\ref{eq:mor}) was  constructed so  that two transition matrices $\tp$ and $\btp$ satisfy (i) and (ii) with $B,\bB$
 replaced by $\tp, \btp$. This ordering
 was used in Assumption (A6).

\appendix

\subsection{Value Iteration Algorithm} \label{sec:vi}

The proof of  the structural results in this paper will use the value iteration algorithm \cite{HL96}.
Let $n=1,2,\ldots,$ denote iteration number. The value iteration algorithm
proceeds as follows:
\begin{align}
 V_{n+1}(\pi) &= \min_{u \in \U} Q_{n}(\pi,u),
 \text{ where }   Q_n(\pi,u) =  C(\pi,u)
+ \sum_{y \in \Y}  V_n\left( \Tp(\pi ,y,u) \right) \sigp(\pi,y,u),\nonumber\\
\text{ and } &  Q_n(\pi,0) =  C(\pi,0) \text{ initialized by } V_0(\pi) = 0. \label{eq:vi}
\end{align}
%For the case of quickest detection, see (\ref{eq:costdef}), the initialization is $V_0(\pi) = -\f^\p \pi$

Let $\mathcal{B}(X)$ denote the set of bounded real-valued functions on $\I$.
Then for any $V$ and  $\tilde{V} \in \mathcal{B}(X)$, define the sup-norm metric
$\sup\|V(\pi) - \tilde{V}(\pi)\|$, $\pi \in \I$. Then $\mathcal{B}(X)$ is a Banach
space. The value iteration algorithm (\ref{eq:vi}) will generate a sequence of value functions
$\{V_k\} \subset \mathcal{B}(X)$ that will converge uniformly (sup-norm metric) as $k\rightarrow \infty$ to $V(\pi) \in \mathcal{B}(X)$, the optimal value
 function of Bellman's equation.
However, since the belief state space $\I$ is an uncountable  set, the
value iteration algorithm (\ref{eq:vi}) do not 
translate into practical solution methodologies as
$V_k(\pi) $ needs to be evaluated at each $\pi \in \I$, an
uncountable set. Nevertheless, the value iteration algorithm provides a natural method
for proving our results on the structure of the optimal strategy via mathematical induction.

\subsection{Proof of Theorem \ref{thm:main1}} \label{app:main1}

To prove the existence of a monotone optimal strategy,
we will show that
 $Q(\pi,u)$  in   (\ref{eq:dp_alg}) is a submodular function on  the poset $[\I,\gr]$. Note that  $[\I,\gr]$ is a lattice since
given any two belief states $\pi_1,\pi_2 \in \I$, $\sup\{\pi : \pi \lr \pi_1, \pi \lr \pi_2\}$ and $\inf\{\pi : \pi \gr \pi_1, \pi \gr \pi_2\}$ lie in $\I$.
For $X=2$, $\I$ is  the unit interval [0,1]  and in this case $[\I,\gr]$ is a chain (totally ordered set).

\begin{definition}[Submodular function \cite{Top98}] \label{def:supermod} 
 $f:\I \times \u \rightarrow \reals$  is  submodular (antitone differences)
if 
$f(\pi,u) - f(\pi,\bar{u}) \leq f(\tilde{\pi},u)-f(\tilde{\pi},\bar{u})$, for
$\bar{u} \leq u$, $\pi \gr \tpi$.
\end{definition}

%Since on  lines $\l(e_{x},\bp)$ all elements are orderable, antitone differences and 
%submodular are equivalent, see \cite{Top98}.

The following result says that for a submodular function  $Q(\pi,u)$, 
$\mu^*(\pi)=\argmin_u Q(\pi,u)$  is increasing in its argument $\pi$. This will be used to prove the existence of a monotone optimal strategy
in Theorem \ref{thm:main1}.

\begin{theorem}[\cite{Top98}] \label{res:monotone}
\label{res:supermod}  If $f:\I\times \U \rightarrow \reals$ is submodular, then
there exists a $\mu^*(\pi) = \argmin_{u\in \U} f(\pi,u)$, that  is MLR increasing on $\I$,
i.e., $\tpi \gr {\pi} \implies \mu^*(\pi) \leq \mu^*(\tpi)$. \qed
\end{theorem}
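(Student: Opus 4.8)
The plan is to establish the monotone–selection statement by the classical interchange (single-crossing) argument of Topkis, specialising it to the fact that the action set $\U=\Delayseti$ is a finite chain (totally ordered). Because $\U$ is finite, $\argmin_{u\in\U}f(\pi,u)$ is nonempty for every $\pi\in\I$, which already gives existence of a minimiser; the real content is that a \emph{monotone} selection can be made. Accordingly I would not work with an arbitrary minimiser but fix the particular selection $\mu^*(\pi)\ole\max\argmin_{u\in\U}f(\pi,u)$, the largest minimiser, and show that this selection is MLR increasing.

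The core step is a squeeze argument on two MLR-comparable belief states. I would fix $\tpi\gr\pi$, set $u^*=\mu^*(\pi)$ and $\tilde u^*=\mu^*(\tpi)$, and argue by contradiction, supposing $\tilde u^*<u^*$. Taking $\bar u=\tilde u^*$ and $u=u^*$ (so $\bar u<u$) in the submodularity inequality of Definition~\ref{def:supermod}, applied with MLR-larger state $\tpi$ on the left and MLR-smaller state $\pi$ on the right, yields
\begin{equation}
f(\tpi,u^*)-f(\tpi,\tilde u^*)\;\le\; f(\pi,u^*)-f(\pi,\tilde u^*). \nonumber
\end{equation}
Optimality of $\tilde u^*$ at $\tpi$ forces the left-hand side to be $\ge 0$, while optimality of $u^*$ at $\pi$ forces the right-hand side to be $\le 0$; hence both sides must vanish. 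In particular $f(\tpi,u^*)=f(\tpi,\tilde u^*)$, so $u^*$ is itself a minimiser of $f(\tpi,\cdot)$. This contradicts $\tilde u^*=\max\argmin_u f(\tpi,u)<u^*$. Therefore $\tilde u^*\ge u^*$, i.e.\ $\mu^*(\pi)\le\mu^*(\tpi)$, which is exactly the asserted MLR-monotonicity of the selection.

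Two points deserve care rather than presenting a genuine obstacle. First, for $X>2$ the order $\gr$ on $\I$ is only a partial order, so both the claim and the argument compare only MLR-comparable pairs $\tpi\gr\pi$; nothing is asserted (nor needed) about incomparable belief states, and it is the finiteness of the chain $\U$ that makes the largest-minimiser selection well defined and lets the squeeze collapse the strong set order to a simple comparison of extreme minimisers. Second, one must keep the direction of the ordering consistent between the antitone-differences hypothesis (MLR-larger state on the left in Definition~\ref{def:supermod}) and the desired conclusion; matching these orientations correctly is the only place a sign or direction error could creep in. The main ``difficulty'' is thus bookkeeping of the order directions, since the squeeze itself is elementary and uses no analytic estimate.
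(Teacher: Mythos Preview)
The paper does not give its own proof of this theorem: it is quoted from Topkis \cite{Top98} and closed immediately with a \qed, so there is nothing to compare against. Your argument is the standard Topkis interchange (single-crossing) proof, and it is correct as written: choosing the largest minimiser $\mu^*(\pi)=\max\argmin_{u\in\U}f(\pi,u)$ is exactly the selection needed, and the squeeze $0\le f(\tpi,u^*)-f(\tpi,\tilde u^*)\le f(\pi,u^*)-f(\pi,\tilde u^*)\le 0$ forces the contradiction with $\tilde u^*<u^*$. Your orientation of the antitone-differences inequality matches Definition~\ref{def:supermod} (MLR-larger state on the left), so the bookkeeping is right; the only cosmetic point is that the paper's definition uses the symbols $\pi\gr\tpi$ while you use $\tpi\gr\pi$, but you have correctly substituted by role rather than by name.
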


Finally, we state the following result.
\begin{theorem} The sequence of value function $\{V_n(\pi)$, $ n=1,2,\ldots\}$,  generated by the value iteration algorithm (\ref{eq:vi}), and 
optimal value function $V(\pi)$ defined in (\ref{eq:dp_alg}) satisfy:\\
(i) $V_n(\pi)$  and $V(\pi)$ are concave in $\pi \in I$. \\
(ii)  Under (A1), (A2), (A3), $V_n(\pi)$ and $V(\pi)$ are increasing in $\pi$ with respect to the MLR stochastic order on $\I$. \qed
\label{lem:concave}
\end{theorem}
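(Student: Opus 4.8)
The plan is to prove both parts by induction on the value iteration recursion (\ref{eq:vi}), exploiting the fact (established in Appendix \ref{sec:vi}) that $V_n \to V$ uniformly, so that any property preserved under pointwise limits transfers from the iterates to $V$. The base case is trivial since $V_0 \equiv 0$ is both concave and MLR increasing. At each inductive step the argument has two pieces: (a) establish the property for $Q_n(\cdot,u)$ for each fixed $u$, and (b) check that $V_{n+1}(\pi) = \min_{u \in \U} Q_n(\pi,u)$ inherits it. For the minimization in (b), concavity is preserved because a pointwise minimum of concave functions is concave, while MLR monotonicity is preserved because $Q_n(\pi_1,u) \geq Q_n(\pi_2,u)$ for every $u$ (whenever $\pi_1 \gr \pi_2$) forces $\min_u Q_n(\pi_1,u) \geq \min_u Q_n(\pi_2,u)$.

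For part (i), the stopping/continuation cost $C(\pi,u) = C_u^\p \pi$ is linear, hence concave, and carries no assumptions. The only nontrivial term is $\sum_{y} V_n(T(\pi,y,u))\sigma(\pi,y,u)$. Writing $w(\pi) = B_y ({\tp^\p})^{D_u}\pi$ and $\sigma(\pi,y,u) = \ones^\p w(\pi)$, both affine in $\pi$, each summand equals $\sigma(\pi,y,u)\,V_n\big(w(\pi)/\sigma(\pi,y,u)\big)$, which is the perspective of $V_n$ evaluated at affine functions of $\pi$. Since the perspective of a concave function is concave and composition with an affine map preserves concavity, each summand is concave in $\pi$; summing over $y$ and adding the linear cost shows $Q_n(\cdot,u)$ is concave, so $V_{n+1}$ is concave. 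This part requires none of (A1)--(A3).

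For part (ii), assume inductively that $V_n$ is MLR increasing, and fix $\pi_1 \gr \pi_2$ and an action $u$. The cost term is handled by (A1): since $C(e_i,u)$ is increasing in $i$ and $\pi_1 \gr \pi_2$ implies $\pi_1 \gs \pi_2$ by Theorem \ref{res1}(i), Theorem \ref{res1}(ii) gives $C_u^\p \pi_1 \geq C_u^\p \pi_2$. The substantial work is the expected-value term, which I would bound in two steps. Abbreviate $\phi_i(y) = V_n(T(\pi_i,y,u))$. First, (A2) and Theorem \ref{thm:filter}(1) give $T(\pi_1,y,u) \gr T(\pi_2,y,u)$, so the induction hypothesis yields $\phi_1(y) \geq \phi_2(y)$ pointwise, whence $\sum_y \phi_1(y)\sigma(\pi_1,y,u) \geq \sum_y \phi_2(y)\sigma(\pi_1,y,u)$. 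Second, (A3) and Theorem \ref{thm:filter}(4) make $y \mapsto T(\pi_2,y,u)$ MLR increasing, so $\phi_2(\cdot)$ is nondecreasing in $y$; meanwhile (A2)--(A3) and Theorem \ref{thm:filter}(2) give $\sigma(\pi_1,\cdot,u) \gs \sigma(\pi_2,\cdot,u)$, so treating the normalization measures as distributions over $y$ and invoking the characterization of first order dominance (Theorem \ref{res1}(ii)) yields $\sum_y \phi_2(y)\sigma(\pi_1,y,u) \geq \sum_y \phi_2(y)\sigma(\pi_2,y,u)$. Chaining the two inequalities proves $Q_n(\pi_1,u) \geq Q_n(\pi_2,u)$, hence $V_{n+1}$ is MLR increasing.

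The main obstacle is precisely this two-step bound on the expected future cost: the three filtering properties of Theorem \ref{thm:filter} must be applied in concert, and in particular the monotonicity of $y \mapsto \phi_2(y)$ (needed to exploit stochastic dominance of the normalization measure) is what forces the use of Part 4 of Theorem \ref{thm:filter} and hence assumption (A3). Once this term is controlled, passing to the limit is routine: concavity and the pointwise inequality $V_n(\pi_1) \geq V_n(\pi_2)$ are both preserved under the uniform limit $V_n \to V$, completing the proof.
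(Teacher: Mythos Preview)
Your proposal is correct and follows essentially the same route as the paper, which does not give a self-contained proof but simply cites \cite{Cas98} for concavity and \cite[Proposition~1]{Lov87} for the MLR monotonicity via induction on value iteration; your two-step bound on the expected future cost (first use Part~1 of Theorem~\ref{thm:filter} with the induction hypothesis, then use Parts~2 and~4 together with first-order dominance) is precisely the Lovejoy argument the paper appeals to. One minor remark: when you invoke Theorem~\ref{res1}(ii) to pass from $\sigma(\pi_1,\cdot,u) \gs \sigma(\pi_2,\cdot,u)$ to the integral inequality, that theorem is stated for finite vectors, so for general $\Y$ you should instead appeal to the analogous characterization in Theorem~\ref{thm:hs}(i) (or the standard equivalence for first-order stochastic dominance), noting that $\phi_2 \geq 0$ since costs are nonnegative and $V_0 = 0$.
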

Statement (i) is well known for POMDPs, see \cite{Cas98} for a tutorial description. Statement (ii) is proved in \cite[Proposition 1]{Lov87} using
mathematical induction on the value iteration algorithm.

{\bf Proof}: With the above preparation, we present the proof of Theorem \ref{thm:main1}.

The first claim follows from the general result that the stopping set $\Stop$  for a POMDP is always a convex subset of $\I$ -- see  Theorem \ref{thm:myopic}. Of course, a one dimensional convex set is an interval and since $e_1 \in \Stop$, it follows that the interval $\Stop = (\pi_1^*,1]$.

In light of the first claim, the optimal strategy is of the form
$$ \mu^*(\pi) =  \begin{cases} 0  & \pi \in \Stop \\
					\argmin_{u \in \delayseti}  Q(\pi,u)  &   \pi \in \I - \Stop \end{cases} $$

So to  prove the second claim,  we only need to focus  on belief states in the interval $\I - \Stop = [0,\pi_1^*]$
and consider actions $u\in \delayseti$.  
To prove that  $\mu^*(\pi)$ is MLR increasing in $\pi \in \I -\Stop$, from Theorem \ref{res:monotone} we  need to prove that $ Q(\pi,u)$ is submodular, that is
$$ Q(\pi,u) - Q(\pi,\bu) - Q(\bpi,u)+ Q(\bpi,\bu) \leq 0, \quad u > \bu, \; \pi \gr \bpi .$$
From (\ref{eq:dp_alg}), the left hand side of the above expression is
\begin{align}
&C(\pi,u) - C(\pi,\bu) - C(\bp,\bu) + C(\bp,\bu)   \nonumber\\
+ & \sum_y V(T(\pi,y,u)) \left[\sigma(\pi,y,u) - \sigma(\pi,y,\bu) - \sigma(\bpi,y,u) + \sigma(\bp,y,\bu) \right]  \nonumber \\
  + & \sum_y \left[V(T(\pi,y,u)) - V(T(\bp,y,u))\right] \sigma(\pi,y,u) 
+ \sum_y  \left[V(T(\pi,y,u)) - V(T(\pi,y,\bu))\right] \sigma(\pi,y,\bu) \nonumber \\ 
 +  &  \sum_y\left[V(T(\bpi,y,\bu)) - V(T(\pi,y,u))\right] \sigma(\bpi,y,\bu) \label{eq:rhs}
\end{align}
Since the cost is submodular by (A4),  the first line of (\ref{eq:rhs}) is negative.
Since $V(\pi)$ is MLR increasing from Theorem \ref{lem:concave} and $T(\pi,y,u)$ is MLR increasing in $y$ from Theorem \ref{thm:filter}(4),
it follows that $V(T(\pi,y,u))$ is MLR increasing in $y$. Therefore,
since $\sigma(\pi,\cdot,u)$ is submodular from Theorem \ref{thm:filter}(3), the second line of (\ref{eq:rhs}) is negative.

It only remains to prove that the third and fourth lines of (\ref{eq:rhs}) are negative.
From statements (1) and (6) of Theorem \ref{thm:filter}, it follows that $T(\pi,y,u) \gr T(\pi,y,\bu) \gr T(\bp,y,\bu)$ and 
$T(\pi,y,u)  \gr T(\bp,y,u) \gr T(\bp,y,\bu)$.  
Now we use the assumption that $X=2$. So the belief  state space $\I$ is a one dimensional simplex
that can be represented by $\pi(2) \in [0,1]$. So below we represent  $\pi$, $T(\pi,y,u)$, etc.\ by their  second elements.
Therefore using concavity of $V(\cdot)$, we can express the last two summations in (\ref{eq:rhs}) as follows:
\begin{align*}
V(T(\pi,y,u)) - V(T(\pi,y,\bu)) &\leq  \left[T(\pi,y,u) - T(\pi,y,\bu)\right]  \frac{V(T(\pi,y,u)) - V(T(\bp,y,u))}{T(\pi,y,u) - T(\bp,y,u)} \\
V(T(\bp,y,\bu)) - V(T(\pi,y,u)) &\leq \frac{T(\bp,y,\bu) -  T(\bp,y,u)}{T(\pi,y,\bu) - T(\bp,y,u)} \left[ V(T(\pi,y,u)) - V(T(\bp,y,u)) \right]  \\ & + V(T(\bp,y,u)) - V(T(\pi,y,u))
\end{align*}
Using these expressions, the summation  of the last two lines of  (\ref{eq:rhs}) are upper bounded by
\begin{multline} \sum_y
\left[V(T(\pi,y,u)) - V(T(\bp,y,u))\right] \biggl[ \sigma(\bpi,y,u) + \frac{T(\pi,y,u) - T(\pi,y,\bu)}{T(\pi,y,u) - T(\bp,y,u)} \sigma(\pi,y,\bu) \\
+ \frac{T(\bp,y,\bu) - T(\pi,y,u)}{T(\pi,y,u) - T(\bp,y,u)} \sigma(\bp,y,\bu) \biggr] \label{eq:subex} \end{multline}
Since $V(\pi)$ is MLR increasing (Theorem \ref{lem:concave}) and $T(\pi,y,u) \gr T(\bpi,y,u)$ (using the fact that $\pi \gr \bpi$ and Statement 1 of Theorem \ref{thm:filter}),
clearly $V(T(\pi,y,u)) - V(T(\bp,y,u)) \geq 0$.
The term in square brackets in (\ref{eq:subex}) can be expressed as (see \cite{Alb79})
$$ \frac{B_{2y} B_{1y} (\pi - \bp)  (\tp^{D_2}|_{22} - \tp^{D_2}|_{12} - \tp^{D_1}|_{22} - \tp^{D_1}|_{12})}{\sigma(\pi,y,u) [T(\pi,y,u)-T(\bp,y,u)]} $$
By Assumption (A5)(ii)  the above term is negative. Hence (\ref{eq:rhs}) is negative, thereby concluding the proof.

\subsection{Proof of Theorem \ref{lem:transform}} \label{app:transform}

Statement 1: Consider Bellman's equation (\ref{eq:dp_alg}) and  define $\uV(\pi) = V(\pi) - \alpha C(\pi,L)$.
It  is easily checked that $\uV(\pi)$ satisfies Bellman's equation with costs $C(\pi,u)$ replaced by $\uC(\pi,u)$ defined in (\ref{eq:transform}).
Also  since the term being subtracted, namely, $\alpha C(\pi,L)$  is functionally independent of the minimization variable $u$,  the argument of the minimum of (\ref{eq:dp_alg}), which is 
the optimal strategy $\mu^*(\pi)$,  is unchanged.

Statement 2: Since our aim is to transform the delay cost to yield a MLR increasing submodular transformed cost, for notational convenience assume the measurement cost $\mc(x,u) = 0$.
From its definition in (\ref{eq:transform}), straightforward computations yield
that the transformed cost is
\begin{align*}
\uC(\pi,0) &= \f ^\p \pi -  \alpha d e_1^\p (I+\tp + \cdots \tp^{D_L-1}) \pi \\
 \uC(\pi,u) &= d e_1^\p \left( (1-\alpha) I + \alpha \tp^{D_L} \right)^\p  (I + \tp + \cdots + \tp^{D_u-1})^\p \pi, \quad u \in \delayseti 
 \end{align*}
 So clearly for $\alpha \geq 0$,   $\uC(e_1,0) \leq \uC(e_2,0)$, and so $\uC(\pi,0)$ is  MLR increasing.
 
We now give conditions for   $\uC(\pi,u)$, for $u \in \delayseti$ to be MLR increasing in $\pi \in \I$.
By (A2), $(I + \tp + \cdots + \tp^{D_u-1})^\p \pi$ is MLR increasing in $\pi \in \I$. So for $\uC(\pi,u)$ to be MLR increasing in $\pi$,
 it suffices to choose $\alpha$ so that
the elements of  $\left( (1-\alpha) I + \alpha \tp^{D_L} \right) e_1$ are increasing.
Given the structure of $\tp$ in (\ref{eq:tpg}), it follows that 
$$\left( (1-\alpha) I + \alpha \tp^{D_L} \right) e_1 = \begin{bmatrix} 1 \\  \alpha(1- \tp_{22}^{D_L})  \end{bmatrix} $$
So choosing $\alpha \geq 1/(1-\tp_{22}^{D_L})$ is sufficient for   $\uC(\pi,u)$ to be MLR increasing in $\pi$ for $u \in \delayseti$ and
therefore for $u \in \{0,1,\ldots,L\}$.

Next for the transformed cost $\uC(\pi,u)$ to be submodular for $u\in \delayseti$, we  require $\uC(\pi,u+1)  - \uC(\pi,u)$ to be MLR decreasing in $\pi$.
Straightforward computations yield for $u \in \delayseti$,
$$ \uC(\pi,u+1)  - \uC(\pi,u) = d  e_1^\p \left( (1-\alpha) I + \alpha \tp^{D_L} \right)^\p  
(\tp^{D_u}+   \tp^{D_u+1} + \cdots + \tp^{D_{u+1}})^\p \pi $$
So for $\uC(\pi,u)$ to be submodular,
 it suffices to choose $\alpha$ so that
the elements of  $\left( (1-\alpha) I + \alpha \tp^{D_L} \right) e_1$ are decreasing, i.e.,  $\alpha \leq 1/(1-\tp_{22}^{D_L})$.

Therefore choosing   $\alpha =1/(1-\tp_{22}^{D_L}) $ is sufficient for the transformed cost $\uC(\pi,u)$ to be both MLR increasing for $u\in \{0,1,\ldots,L\}$ and submodular
for $u\in \delayseti$ on the poset $[\I-\Stop,\gr]$.  

\subsection{Proof of Theorem \ref{thm:myopic}} \label{app:myopic}

{\em Statement 1}:
The proof of convexity of the stopping set $\Stop$ follows from arguments in \cite{Lov87a}. We repeat this for completeness here.
Pick any two belief states $\pi_1,\pi_2 \in \Stop$. To demonstrate convexity of $\Stop$,
we need to show for any $\lambda \in [0,1]$,  $\lambda \pi_1 + (1-\lambda) \pi_2 \in \Stop$.
%This is shown as follows.
Since $V(\pi)$ is concave (by Theorem~\ref{lem:concave} above), it follows from (\ref{eq:dp_alg}) that
\begin{align}
V(\lambda \pi_1 + (1-\lambda) \pi_2) &\geq \lambda V(\pi_1) + (1-\lambda) V(\pi_2) \nonumber\\
&= \lambda Q(\pi_1,0) + (1-\lambda) Q(\pi_2,0)  \text{ (since $\pi_1,\pi_2 \in \Stop$) } \nonumber\\
&= Q(\lambda \pi_1 + (1-\lambda) \pi_2,0 ) \text{ (since $Q_{1}(\pi,0)$ is linear in $\pi$) }\nonumber \\
& \geq V(\lambda \pi_1 + (1-\lambda) \pi_2) \text{ (since $V(\pi)$ is the optimal value function) } \label{eq:convexregion}
\end{align}
Thus all the inequalities above are equalities, and $\lambda \pi_1 + (1-\lambda) \pi_2 \in 
\Stop$.

{\em Statement 2}: Since the costs $C(\pi,u)$ are non-negative, so is $V(\pi)$ in (\ref{eq:dp_alg}).  So from (\ref{eq:dp_alg}).,
$C(\pi,0) \leq C(\pi,u) \implies
Q(\pi,0) \leq Q(\pi,u) \implies \pi \in \Stop$. Therefore $\uStop \subset \Stop$.

{\em Statement 3}:  The proof is similar to \cite[Proposition 2]{Lov87} with the important difference that 
in \cite{Lov87} the  TP2 ordering of transition matrices is used instead of  (A6).
However, the TP2 ordering (see \cite{Lov87} for definition) does not yield any non-trivial example. 

Since $D_u < D_{u+1}$,  (A6) implies $\tp^{D_u} \succeq  \tp^{D_{u+1}}$.
So by Statement 6(a) of Theorem \ref{thm:filter}, for $u > \bu$,
$T(\pi,y,u) \lr T(\pi,y,\bu)$. By Theorem~\ref{lem:concave},  $V(\pi)$ is MLR increasing in $\pi$.
Therefore $V(T(\pi,y,u)) \leq  V(T(\pi,y,\bu))$.
  So
$$\sum_y V(T(\pi,y,u) \sigma(\pi,y,u) \leq  \sum_y V(T(\pi,y,\bu) \sigma(\pi,y,u) , \quad  u > \bu. $$
Since $T(\pi,y,\bu)$ is  MLR increasing in $y$   (Statement 4 of Theorem \ref{thm:filter})
and $V(\pi)$ is MLR  increasing in $\pi$, clearly
 $V(T(\pi,y,\bu))$ is increasing in $y$.
 Also (A6) implies $\tp^{D_u} \succeq  \tp^{D_{u+1}}$ and so
 $ \sigma(\pi,\cdot,u)  \ls  \sigma(\pi,\cdot,\bu)$ from Statement 6(b) of Theorem \ref{thm:filter}. So
$$\sum_y V(T(\pi,y,\bu)\, \sigma(\pi,y,u) \leq \sum_y V(T(\pi,y,\bu) \,\sigma(\pi,y,\bu), \quad  u > \bu.$$
Therefore, $\sum_y V(T(\pi,y,u) \sigma(\pi,y,u) \leq \sum_y V(T(\pi,y,\bu) \sigma(\pi,y,\bu)$ which is equivalent to
$Q(\pi,u) - Q(\pi,\bu)  \leq C(\pi,u) - C(\pi,\bu)$.
Then \cite[Lemma 2.2]{Lov87} implies that the minimizers of $Q(\pi,u) - Q(\pi,\bu)$ are larger than that of $C(\pi,u) - C(\pi,\bu)$.
That is $\mu^*(\pi) \geq \bar{\mu}(\pi)$ for $\pi \in \I$.

{\em Statement 4}:  By (A4),  $C(\pi,u)$ is submodular on the poset $[\I,\gr]$. So using Theorem \ref{res:monotone}
it follows that $\umu(\pi)$ is MLR increasing.

\subsection{Proof of Theorem \ref{cor:qd2}} \label{app:qd2}
Statements 1 and 2 follows directly from Theorem \ref{thm:myopic}.

%Statement 2:  $\uStop \subset \Stop$ follows from Theorem \ref{thm:myopic}. 
%To show $\Stop \subset \Stopb$, since $\Cb(\pi,u)$ is MLR
%decreasing in $\pi$ (see Statement 3 below),
%$\Cb(\pi,u) \leq \Cb(e_1,u)$. Subtracting the constant $\Cb(e_1,0)$ from the costs, implies that all the stop costs are non-positive.
%Therefore from Bellman's equation (\ref{eq:dp_alg}), $V(\pi) \leq 0$. This  means  
%$u \in \delayseti$ and  $\Cb(\pi,u) < \Cb(\pi,0) \implies \pi \notin S$. So  $\pi \in \Stop \implies \Cb(\pi,u) \geq  \Cb(\pi,0)$, i.e.,
%$\Stop \subset \Stopb$.

Statement 3:  We prove this in the following steps.

{\em Step 1}. $\mu^*(\pi)$ remains invariant with transformed cost:
For costs $\Cb(\pi,u)$  Bellman's equation yields the same optimal strategy $\mu^*(\pi)$ as costs $C(\pi,u)$. To see this,
consider Bellman's equation (\ref{eq:dp_alg}) and  define $\Vb(\pi) = V(\pi) + \alpha d e_1^\p \tp^\p \pi$.
It  is easily checked that $\Vb(\pi)$ satisfies Bellman's equation with costs $C(\pi,u)$ replaced by $\Cb(\pi,u)$ defined in (\ref{eq:transform}).
Also  since the term being added, namely $\alpha d e_1^\p \tp^\p \pi$ is functionally independent of the minimization variable $u$,  the argument of the minimum of (\ref{eq:dp_alg}), which is 
the optimal strategy $\mu^*(\pi)$,  is unchanged.

{\em Step 2}. $\Cb(\pi,u)$ is MLR decreasing:
We show that (A7) implies that $\Cb(\pi,u)$ is MLR decreasing, i.e., $-\Cb(\pi,u)$ is MLR increasing and satisfies (A1).

First consider $\Cb(\pi,0)$. Note
$\Cb(e_1,0) = \alpha d$, and $\Cb(e_i,0) = f + \alpha \tp_{i1}$ for $i > 1$.
So $\Cb(e_1,0) \geq \Cb(e_2,0)$ if $\alpha > f/(d(1-\tp_{21}))$.
Since $
\alpha > 0$ and $\tp$ is TP2 (Assumption A3), $\tp_{i1} > \tp_{i+1,1}$. So clearly $\Cb(e_i,0) \geq  \Cb(e_{i+1},0)$ for $i \geq 2$.

Next consider $\Cb(\pi,u)$, $u\in \delayseti$. Note $\Cb(e_1,u) = d D_u$ and 
\beq \Cb(e_i,u) = 
d (\tp_{i1} + \tp^2|_{i1} + \cdots + \tp^{D_u-1}|_{i1}) + \alpha d (\tp_{21} - \tp^{D_u+1}|_{21} ). \label{eq:cbi} \eeq
Clearly
$\Cb(e_i,u) < d(D_u-1) + \alpha d(\tp_{i1} - \tp^{D_u+1}|_{i1}) $.
Also $\tp_{i1} -  \tp^{D_u+1}|_{i1} \leq 0$ since $\tp^{D_u+1}|_{i1} = \tp_{i1} + \sum_{j>1} \tp_{ij} A^{D_u}|_{j1}$. Therefore for 
non-negative $\alpha$, $\Cb(e_1,u) \geq \Cb(e_i,u)$, $i\geq 2$.
Also for $\Cb(e_i,u ) \geq \Cb(e_{i+1},u)$, $i\geq 2$,  clearly from (\ref{eq:cbi}) it follows that (A7)(ii) is sufficient.

{\em Step 3}: $\Cb(\pi,u)$, $u\geq 1$,  is submodular (satisfies (A4)). This follows similar to Step 2.

{\em Step 4}: With the above three steps, we can now apply Theorem \ref{thm:myopic}, except that $\Cb(\pi,u)$ is MLR decreasing
instead of MLR increasing as required by (A1).  By a very similar proof to Theorem \ref{thm:myopic}, it follows that $\bmu(\pi) \geq \mu^*(\pi)$.

Statement 4: Follows trivially from Statement 3 for the $X=2$ case.

\subsection{Proof of Theorem \ref{thm:tmove}} \label{app:tmove}

{\bf Part 1}: We first prove that dominance of transition matrices $\tp \succeq \btp$ (with respect to (\ref{eq:mor})) results in dominance of optimal costs, i.e., $V(\pi;\tp) \geq V(\pi;\btp)$.
The proof  is by induction. $V_0(\pi;\tp) \geq V_0(\pi;\btp) = 0$ by the initialization of the value iteration algorithm (\ref{eq:vi}). %or $-f^\p \pi$ from (\ref{eq:vi}).
 %In
%either case they are independent of the transition matrix. 
Next, to prove the inductive step
assume  that $V_n(\pi; \tp) \geq V_n(\pi;\btp)$ for $\pi \in \I$.
By Theorem \ref{lem:concave}(ii), under (A1), (A2), (A3), $V_n(\pi;\tp)$ and $V_n(\pi;\btp)$ are MLR increasing in $\pi \in \I$.
From Statement 6(a) of Theorem \ref{thm:filter},  it follows that  $T(\pi,y,u;\tp) \gr T(\pi,y,u;\btp)$. This  implies
$$V_n(T(\pi,y,u;\tp) ; \tp)    \geq V_n(T(\pi,y,u;\btp) ; \tp)  , \quad \tp \succeq \btp.$$
Since $V_n(\pi; \tp) \geq V_n(\pi;\btp) \; \forall \pi \in \I$ by assumption, clearly  
$V_n(T(\pi,y,u,\btp); \tp) \geq V_n(T(\pi,y,u,\btp);\btp)$.
Therefore $$V_n(T(\pi,y,u;\tp) ; \tp)    \geq V_n(T(\pi,y,u;\btp) ; \tp)  \geq V_n(T(\pi,y,u,\btp);\btp),  \quad \tp \succeq \btp.  $$

Under (A2), (A3),  Statement 4 of Theorem \ref{thm:filter} says that  $T(\pi,y,u;\tp) ; \tp) $ is MLR increasing in $y$. Therefore, $V_n(T(\pi,y,u;\tp) ; \tp)$ is increasing in $y$.
Also from Statement 2 of Theorem \ref{thm:filter},  $\sigma(\pi,\cdot,u;\tp) \gs \sigma((\pi,\cdot ,u;\btp)$  for $\tp \succeq \btp$.
Therefore, 
\beq \sum_y V_n(T(\pi,y,u;\tp) ; \tp) \sigma(\pi,\cdot,u;\tp) \geq \sum_y V_n(T(\pi,y,u;\btp) ; \btp) \sigma(\pi,\cdot,u;\btp)  .
\label{eq:s1}
\eeq

Next, we claim that under (A1) and (A2), $\tp \succeq \btp$ implies that $C(\pi,u;\tp) \geq C(\pi,u,;\btp)$. This follows since $c(e_i,u)$
defined in (\ref{eq:cost})  has increasing components by
(A1) and  $({\tp^l})^\p \pi \gr ({\btp}^l)^\p \pi $ (Statement 5(b), Theorem \ref{thm:filter}). Therefore, 
$c_u ^\p ({\tp^l})^\p \pi\geq  c_u ^\p({\btp}^l)^\p \pi $ implying that $C(\pi,u;\tp) \geq C(\pi,u,\btp)$.
This together with (\ref{eq:s1})  implies
$$C(\pi,u;\tp) + \sum_y V_n(T(\pi,y,u;\tp) ; \tp) \sigma(\pi,\cdot,u;\tp) \geq C(\pi,u,\btp) + \sum_y V_n(T(\pi,y,u;\btp) ; \btp) \sigma(\pi,\cdot,u;\btp) $$
Minimizing both sides with respect to action $u$ yields $V_{n+1}(\pi; \tp) \geq V_{n+1}(\pi;\btp)$ and concludes the induction argument.

{\bf Part 2}: Next we show that dominance of observation distributions $B \bd \bB$ (with respect to the order (\ref{eq:blackwell})) results  in dominance of the optimal costs, namely 
$V(\pi;B) \geq V(\pi,\bB)$.
 Let $T(\pi,y,u)$ and $\bT(\pi,y,u)$ denote  the Bayesian filter update with observation $B$ and $\bB$, respectively, and let
$\sigma(\pi,y,u)$ and $\bsigma(\pi,y,u)$ denote the corresponding normalization measures.

Then for $a\in \Y$,
$$ T(\pi,a,u ) =   \sum_{y \in \Y} \bT(\pi,y,u) \frac{\bsigma(\pi,y,u)}{\sigma(\pi,a,u)} P(a|y) 
\quad \text{ and } \sigma(\pi,a,u) = \sum_{y \in \Y} \bsigma(\pi,y,u) P(a|y).$$
Therefore, $\frac{\bsigma(\pi,y,u)}{\sigma(\pi,y,a)} P(a|y) $ is a probability measure wrt $y$.
Since from Theorem \ref{lem:concave},  $V_n(\cdot)$ is concave for $\pi \in \I$, using Jensen's inequality it follows that
\begin{align}
V_n(T(\pi,a,u) ;\bB) & = V_n \left(\sum_{y \in \Y} \bT(\pi,y,u) \frac{\bsigma(\pi,y,u)}{\sigma(\pi,a,u)} P(a|y); \bB \right)
\geq \sum_{y \in \Y}  V_n (\bT(\pi,y,u);\bB) \frac{\bsigma(\pi,y,u)}{\sigma(\pi,a,u)} P(a|y) \nonumber \\
\text{ implying }&  \sum_{a}  V_n(T(\pi,a,u) ;\bB) \sigma(\pi,a,u) \geq
\sum_{y} V_n(\bT(\pi,y,u);\bB) \bsigma(\pi,y,u). \label{eq:blackwell2}
\end{align}

With the above inequality, the proof of the theorem follows by mathematical induction using the value iteration algorithm~(\ref{eq:vi}).
Assume $V_n(\pi;B) \geq V_n(\pi;\bB)$ for $\pi \in  \I$.
Then 
\begin{align*}C(\pi,u) +  \sum_a V_n(T(\pi,a,u);B) \sigma(\pi,a,u) & \geq
C(\pi,u) +  \sum_a V_n(T(\pi,a,u);\bB) \sigma(\pi,a,u) \\  & \geq C(\pi,u) + \sum_y V_n(\bT(\pi,y,u);\bB) \bsigma(\pi,y,u)  
 \end{align*}
where the second inequality follows from  (\ref{eq:blackwell2}).
Thus  $V_{n+1}(\pi;B) \geq V_{n+1}(\pi;\bB)$. This completes the induction step. Since value iteration algorithm (\ref{eq:vi}) converges uniformly,
$V(\pi;B) \geq V(\pi;\bB)$ thus proving the theorem.

\subsection{Proof of Theorem \ref{thm:sens}} \label{app:sens}
Define the set of belief states
$\Stopb = \cap_u \{\pi : (C_u-C_0)^\p \pi \geq 0\}$. Clearly $\Stopb \subseteq \Stop$. Let us characterize the set of observations
such that the Bayesian filter update $T(\pi,y,u;\model)$ lies in $\Stopb$ for any action $u$. Accordingly, define
\beq
 \Rp  = \{ y: (C_{\bu} - C_0)^\p T(\pi,y,u;\model) \geq 0, \; \forall u,\bu \in \delayseti\}, \quad
 \yp = \inf \{y: y \in \Rpc\} . \label{eq:rp} \eeq
Here $\Rpc$ denotes the complement of set $\Rp$.

\begin{lemma}\label{lem:rpc}
Under (A2),(A3),(A4),  the following hold for $\Rp$ and $\yp$ defined in (\ref{eq:rp}):
\\
(i) $\Rpc = \{y: y \geq \yp\}$.
(ii) $\pi \gr \bp \implies \Rp  \subset \Rbp$.
(iii) $\pi \gr \bp \implies \yp  < \ybp$.
\end{lemma}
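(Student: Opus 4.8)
The plan is to reduce all three parts to a single geometric fact: the immediate-stopping region $\Stopb = \cap_u\{\pi : (C_u-C_0)^\p \pi \ge 0\}$ is a \emph{lower set} in the MLR order, meaning $\phi \in \Stopb$ and $\psi \lr \phi$ imply $\psi \in \Stopb$. Note that $\Stopb$ is exactly the myopic stopping region $\{\pi : C(\pi,0) \le C(\pi,u),\, u\in\delayseti\}$, i.e.\ (up to the non-strict versus strict boundary) the set $\uStop$ of (\ref{eq:ustop}) where $\umu(\pi)=0$. By Theorem~\ref{thm:myopic}(4), assumption (A4) makes the myopic strategy $\umu(\pi)$ MLR-increasing; since $u=0$ is the smallest action and $\umu \ge 0$, the set $\{\pi:\umu(\pi)=0\}$ is closed downward in the MLR order, which is precisely the lower-set property. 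This is the only structural input needed; everything else is bookkeeping with the filter monotonicities of Theorem~\ref{thm:filter}.

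For part (i) I would rewrite (\ref{eq:rp}) as $\Rp = \cap_{u\in\delayseti}\{y : T(\pi,y,u;\model) \in \Stopb\}$, since demanding $(C_{\bu}-C_0)^\p T(\pi,y,u;\model)\ge 0$ for every $\bu$ is the same as $T(\pi,y,u;\model)\in\Stopb$. By Theorem~\ref{thm:filter}(4), under (A3) the update $T(\pi,y,u;\model)$ is MLR-increasing in $y$; combined with the lower-set property of $\Stopb$, each set $\{y : T(\pi,y,u;\model)\in\Stopb\}$ is closed downward in $y$ (if $y$ qualifies, then for $y'<y$ we have $T(\pi,y',u;\model)\lr T(\pi,y,u;\model)\in\Stopb$, hence $T(\pi,y',u;\model)\in\Stopb$). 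A finite intersection of downward-closed sets is downward closed, so $\Rp$ is a down-set in $y$ and its complement $\Rpc$ is the up-set $\{y\ge \yp\}$, giving (i).

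For part (ii), fix $y$ and $u$ and suppose $\pi\gr\bp$. By Theorem~\ref{thm:filter}(1), (A2) yields $T(\pi,y,u;\model)\gr T(\bp,y,u;\model)$, i.e.\ $T(\bp,y,u;\model)\lr T(\pi,y,u;\model)$. If $y\in\Rp$ then $T(\pi,y,u;\model)\in\Stopb$ for every $u$, and the lower-set property forces $T(\bp,y,u;\model)\in\Stopb$ for every $u$, i.e.\ $y\in\Rbp$; hence $\Rp\subseteq\Rbp$. Part (iii) is then immediate: taking complements gives $\Rbp^c\subseteq\Rpc$, which by (i) reads $\{y\ge\ybp\}\subseteq\{y\ge\yp\}$, forcing $\yp\le\ybp$. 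To obtain the strict inequality $\yp<\ybp$ I would invoke that (A3) gives strictly positive TP2 minors, so that $\pi\gr\bp$ produces a strict MLR shift of the filter and hence a proper inclusion $\Rp\subsetneq\Rbp$, separating the two infima.

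The step I expect to require the most care is the first one: cleanly pinning down the lower-set property of $\Stopb$ and its identification with $\uStop$, including the boundary bookkeeping (the inequality defining $\Stopb$ is non-strict while (\ref{eq:ustop}) uses a strict inequality) and the tie-breaking in the $\argmin$ defining $\umu$, so that the MLR-increasing selection guaranteed by Theorem~\ref{thm:myopic}(4) is the one whose zero level set equals $\Stopb$. The strictness in (iii) is the only other delicate point, since it needs genuine rather than weak MLR separation; if only weak TP2 is assumed on $B$, the honest statement of (iii) is $\yp\le\ybp$.
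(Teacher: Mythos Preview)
Your argument is correct and is essentially the same as the paper's, just phrased more abstractly. Where you package the key input as ``$\Stopb$ is an MLR lower set'' and reach it via Theorem~\ref{thm:myopic}(4), the paper simply writes ``by (A4), $C_{\bu}-C_0$ has decreasing elements'' and works directly with the linear functionals $(C_{\bu}-C_0)^\p T(\pi,y,u;\model)$: decreasing-component vectors paired with MLR-increasing posteriors give monotone scalars in $y$ (for~(i)) and in $\pi$ (for~(ii)), and (iii) follows from (i)+(ii). Your lower-set reformulation is exactly the same content, but the paper's direct route avoids the $\argmin$ tie-breaking and strict-vs-nonstrict boundary bookkeeping you flagged as delicate --- there is no need to identify $\Stopb$ with the zero level set of a particular $\umu$ selection when one can just read off that each defining half-space $\{(C_{\bu}-C_0)^\p\pi\ge 0\}$ is MLR-downward closed.

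On the strictness in (iii): you are right to be suspicious. The paper's own proof only establishes $\yp\le\ybp$ (it says ``combining these yields $\yp\leq\ybp$''), so the strict inequality in the lemma statement is not actually proved there either, and the downstream use in Theorem~\ref{thm:sens} only needs the weak inequality. Your honest version $\yp\le\ybp$ is the correct reading.
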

\begin{proof}
The first assertion says that the set of observations for continuing is the set $\{y: y \geq \yp\}$.  By (A4), $C_{\bu} - C_0$ has decreasing elements.
Since $T(\pi,y,u;\model)$ is MLR increasing in $y$, clearly  $(C_{\bu} - C_0)^\p T(\pi,y,u;\model)$ is decreasing in $y$.
Therefore, there exists a $\yp$ such that $y \geq \yp$ implies $T(\pi,y,u;\model) \in \Rpc$. This proves the first statement.
 By (A4), $C_{\bu} - C_0$ has decreasing elements. By (A2), (A3), $T(\pi,y,u;\model)$ is MLR increasing in $\pi$.
Therefore  $(C_{\bu} - C_0)^\p T(\pi,y,u;\model) \geq  (C_{\bu} - C_0)^\p T(e_X,y,u;\model)$ 
which implies $\Rp  \subset \Rbp$.
Statement (i) says that  $T(\pi,y,u;\model)$ is MLR increasing in $y$;
statement (ii) says that  $\Rp  \subset \Rbp$. Combining these yields  $\yp  \leq \ybp$.
\end{proof}

For notational convenience denote the optimal strategy $\mu^*(\theta)$  as $\mu$.
From (\ref{eq:cost}), the total cost incurred by applying strategy $\mu(\pi)$ to model $\model$ satisfies at time $n$
\begin{align*}
\tcost{n}(\pi;\model) &= C_{\mu(\pi)}^\p \pi + \sum_{y\in \Y} \tcost{n-1}(T(\pi,y,\mu(\pi);\model) \sigma(\pi,y,\mu(\pi);\model)\\
& = C_{\mu(\pi)}^\p \pi + \sum_{y\in \Rpc} \tcost{n-1}(T(\pi,y,\mu(\pi);\model) \sigma(\pi,y,\mu(\pi);\model) \end{align*}
since for $y \in \Rp$, $T(\pi,y,\mu(\pi);\model) \in \Stopb$ and so $V(T(\pi,y,\mu(\pi);\model)) = 0$.

Therefore, the absolute difference in total costs for models $\model, \bmodel$ satisfies
\begin{align}
|\tcost{n}(\pi;\model) - \tcost{n}(\pi;\bmodel)|  \leq &  \sum_{y \in \Rpc \cup \Rpbc} |
\tcost{n-1}(T(\pi,y,\mu(\pi);\model)  - \tcost{n-1}(T(\pi,y,\mu(\pi);\bmodel)| \,\sigma(\pi,y,\mu(\pi);\model) \nonumber\\
& \quad + \sum_{y \in \Rpc \cup \Rpbc} \tcost{n-1}(T(\pi,y,\mu(\pi);\bmodel)\, \bigl| \sigma(\pi,y,\mu(\pi);\model) - \sigma(\pi,y,\mu(\pi);\bmodel) \bigr| \nonumber \\
 \leq & 
\sup_{\pi \in \I} |\tcost{n-1}(\pi;\model)  - \tcost{n-1}(\pi;\bmodel) | \,
\sum_{y \in \Rpc \cup \Rpbc} \sigma(\pi,y,\mu(\pi);\model) \nonumber \\
& + \sup_{\pi \in \I} \tcost{n-1}(\pi;\bmodel) \sum_{y \in \Y} \bigl| \sigma(\pi,y,\mu(\pi);\model) - \sigma(\pi,y,\mu(\pi);\bmodel) \bigr|
\label{eq:ineqbd}
\end{align}
We will upper bound the various terms on the RHS of (\ref{eq:ineqbd}).
Statement (i) of Lemma \ref{lem:rpc} yields $ \Rpc \cup \Rpbc = \{y \geq \yps\}$ where $\yps = \min(\yp,\ypm)$.
Next Statement (iii) of Lemma \ref{lem:rpc} yields $\ypsx \leq \yps$.
Therefore, 
\begin{align*}
 \sup_{\pi \in \I}  |\tcost{n-1}(\pi;\model)  - \tcost{n-1}(\pi;\bmodel) |& \,
\sum_{y \in \Rpc \cup \Rpbc} \sigma(\pi,y,\mu(\pi);\model) \\
\leq &  \sup_{\pi \in \I} |\tcost{n-1}(\pi;\model)  - \tcost{n-1}(\pi;\bmodel) | \,
\max_u \sum_{y \geq \ypsx} \sigma(\pi,y,u;\model)  \\
\leq & \sup_{\pi \in \I} |\tcost{n-1}(\pi;\model)  - \tcost{n-1}(\pi;\bmodel) | \, \max_u \sum_{y \geq \ypsx} \sigma(e_X,y,u;\model) 
\end{align*}
where the last line follows since $e_X \gs \pi$, and so Statement 2 of Theorem \ref{thm:filter} implies $\sigma(\pi,\cdot,u;\model) \ls \sigma(e_X,\cdot,u;\model)$.
Also evaluating $ \sigma(\pi,y,\mu(\pi);\model) = \mathbf{1}_X^\p B_y  ({\tp^\p})^{\mu(\pi)} \pi$ defined in (\ref{eq:hmm}) yields
\begin{multline}
 \sum_{y \in \Y} \bigl| \sigma(\pi,y,\mu(\pi);\model) - \sigma(\pi,y,\mu(\pi);\bmodel) \bigr| 
\leq \max_u \sum_y \sum_i \sum_j |B_{jy} \tp^u|_{ij} - \bB_{jy} \btp^u |_{ij}| \pi(i) \\
\leq   \max_u \max_i  \sum_y \sum_j  |B_{jy} \tp^u|_{ij} - \bB_{jy} \btp^u |_{ij}|  \label{eq:modeldiff}
\end{multline}
Finally, $\sup_{\pi \in \I} \tcost{n-1}(\pi;\bmodel) \leq \max_{i\in \X}C(e_i,0)$. Using these bound in (\ref{eq:ineqbd}) yields
\beq
\sup_{\pi\in \I} |\tcost{n}(\pi;\model) - \tcost{n}(\pi;\bmodel)| \leq 
\rho\,  \sup_{\pi \in \I} |\tcost{n-1}(\pi;\model)  - \tcost{n-1}(\pi;\bmodel) | + \max_{i\in \X}C(e_i,0) \|\model - \bmodel\|
\label{eq:unravel}
 \eeq
 where $\rmm  = \max_u \sum_{y \geq \ypsx} \sigma(e_X,y,u;\model) $ and $\|\model - \bmodel\|$ is given by (\ref{eq:modeldiff}).
Since $\max_u \sum_{y \in \Y}  \sigma(e_X,y,u;\model)  = 1$, then (A7) implies    $\rmm  = \max_u \sum_{y \geq \ypsx} \sigma(e_X,y,u;\model)  < 1$.
Then starting with  $\tcost{0}(\pi;\model) = \tcost{0}(\pi;\bmodel)=0$, unravelling (\ref{eq:unravel}) yields
(\ref{eq:sens}).

{\bf Proof of Corollary \ref{cor:kl}}:  When $\model$ and $\bmodel$ have  identical transition matrices, then
(\ref{eq:modeldiff}) becomes
$$   \max_u \max_i \sum_j   \tp^u|_{ij}  \sum_y   |B_{jy}  - \bB_{jy} |   $$
From Pinsker's inequality \cite{CT06}, the total variation norm is bounded by Kullback-Leibler distance $D$ defined in (\ref{eq:kl}) as
 $$  \sum_y   |B_{jy}  - \bB_{jy} |    \leq  \sqrt{2 \, D(B_j \| \bB_j )}$$

\subsection{Proof of Theorem \ref{thm:filter}} \label{sec:tfilter}

We quote the following result from \cite{HS84}, which adapted to our notation reads
\begin{theorem}[{\cite[Lemma 8.2, pp.382]{HS84}}]  (i) Suppose $p(\cdot)$ and $q(\cdot)$ are integrable functions on $\Y$
and $f(\cdot)$ is increasing and non-negative. Then $\sum_Y f(y) p(y) \leq \sum_Y f(y) q(y)$ iff
$\sum_{y\geq \bar{y}} p(y) \geq \sum_{y\geq \bar{y}} q(y) $.
\\
(ii) Suppose $f_i$ is increasing for $i\in X$ and non-negative. Then for arbitrary vectors $p,q \in \reals^X$,
$ f^\p p \geq f^\p q  \text{ iff } \sum_{j\geq \bar{j}} p_j \geq \sum_{j \geq \bar{j}} q_j \text{ for all } \bar{j} \in \X $
\label{thm:hs} \qed
\end{theorem}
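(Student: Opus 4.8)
Both parts follow from the elementary \emph{summation-by-parts} (Abel) identity, with part (i) being its integral analogue obtained from a layer-cake representation of an increasing function. The ``iff'' is understood with the linear functional inequality quantified over the whole class of increasing non-negative test functions -- this is precisely how the lemma is used in the paper to pass between expectation inequalities and the tail-sum (first-order stochastic dominance) condition. I would treat the discrete statement (ii) in detail and then indicate the integral version. For (ii), introduce the tail sums $P_{\bar{j}} = \sum_{j \geq \bar{j}} p_j$ and $Q_{\bar{j}} = \sum_{j \geq \bar{j}} q_j$ for $\bar{j} \in \X$, with the convention $P_{X+1} = Q_{X+1} = 0$, so that $p_j = P_j - P_{j+1}$.

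\textbf{Tail condition implies the functional inequality.} First I would substitute $p_j = P_j - P_{j+1}$ into $f^\p p$ and reindex to obtain the identity
\[
f^\p p \;=\; f_1\,P_1 + \sum_{j=2}^X (f_j - f_{j-1})\,P_j ,
\]
the boundary term dropping out since $P_{X+1}=0$. Subtracting the same identity for $q$ gives
\[
f^\p p - f^\p q \;=\; f_1\,(P_1 - Q_1) + \sum_{j=2}^X (f_j - f_{j-1})\,(P_j - Q_j) .
\]
Non-negativity of $f$ yields $f_1 \geq 0$ and monotonicity yields $f_j - f_{j-1} \geq 0$; hence the hypothesis $P_{\bar{j}} \geq Q_{\bar{j}}$ for all $\bar{j}$ makes every summand non-negative, so $f^\p p \geq f^\p q$, and this holds simultaneously for every admissible $f$.

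\textbf{The converse.} For the reverse implication I would test against the shifted indicators: for each fixed $\bar{j} \in \X$ put $f^{(\bar{j})}_j = \mathbf{1}\{j \geq \bar{j}\}$, which is non-negative and increasing and satisfies $(f^{(\bar{j})})^\p p = P_{\bar{j}}$ and $(f^{(\bar{j})})^\p q = Q_{\bar{j}}$. If the functional inequality holds for every admissible $f$, then specialising to $f^{(\bar{j})}$ delivers $P_{\bar{j}} \geq Q_{\bar{j}}$ for each $\bar{j}$, i.e.\ the tail condition. (Taking $f$ constant shows the converse genuinely requires the full family of test functions, not a single one.)

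\textbf{Integral case and main obstacle.} For (i) I would replace Abel summation by the representation of an increasing non-negative $f$ as $f(y) = c + \int \mathbf{1}\{y \geq t\}\, d\nu(t)$, where $c \geq 0$ is the left-hand limit and $\nu \geq 0$ is the Lebesgue--Stieltjes measure of $f$. Inserting this and interchanging the order of integration (Fubini) expresses $\sum_Y f(y)\,(p(y)-q(y))$ as a base term plus $\int d\nu(t)\,\big[\sum_{y \geq t} (p(y)-q(y))\big]$, i.e.\ a non-negative average of the tail differences; the equivalence then reads off from this identity exactly as in the discrete case, the direction of the functional inequality being dictated by that of the tail inequality, and the converse again using the indicators $\mathbf{1}\{y \geq t\}$. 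I expect the only real work to be in the integral converse: justifying the Fubini interchange, controlling the behaviour at the ends of $\Y$, and checking that both the layer-cake representation and the indicator test functions remain within the admissible class (increasing, non-negative, and integrable against $p$ and $q$). The discrete part is pure bookkeeping once the summation-by-parts identity is written down.
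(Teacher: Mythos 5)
Your proof is correct, but there is nothing in the paper to compare it against: the paper states this result as an imported tool, citing \cite{HS84} (Lemma 8.2, p.~382) and ending with \qed, and never proves it --- it is invoked only in its ``if'' direction, with a specific increasing $f$, in the proofs of Statements 3 and 6(b) of Theorem~\ref{thm:filter}. So your Abel-summation argument for the discrete case and the layer-cake representation $f(y)=c+\int \mathbf{1}\{y\geq t\}\,d\nu(t)$ for the integral case constitute a genuine self-contained proof rather than an alternative to one. Two remarks on how your proof relates to the statement as printed. First, your reading of the ``iff'' as quantified over the entire class of increasing non-negative test functions is the right one; for a single fixed $f$ (e.g.\ constant) the ``only if'' direction is false, exactly as your parenthetical notes, and since the paper only ever uses the forward direction with one $f$ at a time, this reading is consistent with every application in the text. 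Second, part (i) as printed has mismatched inequality directions ($\leq$ on the functional side, $\geq$ on the tail side), which contradicts part (ii) and the way the lemma is applied in the proof of Theorem~\ref{thm:filter}(3), where a tail inequality with $\leq$ produces the functional inequality with $\leq$; you silently proved the consistent same-direction version, which is the correct statement. Your discrete bookkeeping is sound: the identity $f^\p p = f_1 P_1 + \sum_{j=2}^X (f_j-f_{j-1})P_j$ with $P_{X+1}=0$ is exact, the converse via the indicators $f^{(\bar j)}$ correctly includes $\bar j = 1$ (needed because $f$ is only non-negative, not anchored at zero, so total masses must also be compared --- the analogue in the integral case being your base term $c\int(p-q)$, which is itself a tail difference at the left end of $\Y$), and your flagged caveats for the integral converse (Tonelli for the non-negative measure $\nu$, integrability of $f$ against $p$ and $q$, endpoint behaviour) are precisely where the residual work lies.
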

The above theorem is similar to Statement (ii) of Theorem \ref{res1} with some important difference.
Unlike Theorem \ref{res1},  $p$ and $q$ need not be probability measures.
On the other hand, Theorem \ref{res1} does not require $f$ to be non-negative.

{\bf Proof of Theorem \ref{thm:filter}}:
Statements 1, 2 and 4 of the theorem are proved in \cite{Lov87}.

Statement 3: 
Suppose  $\pi \gr \bp$. Then clearly  (A5)-(i) implies that 
$$ \sum_{j \geq q} \sum_i \left( \tp^{D_{u+1}}\vert_{ij} - \tp^{D_u}\vert_{ij}\right) \pi(i) \leq 
 \sum_{j \geq q} \sum_i  \left( \tp^{D_{u+1}}\vert_{ij} - \tp^{D_u}\vert_{ij}\right ) \bp(i). $$
Also (A3) implies that  
 $\sum_{y\geq q} B_{jy} $ is increasing in $j$. 
 Then applying Theorem \ref{thm:hs}(i)  yields
$$\sum_j \sum_{y\geq q} B_{jy} \sum_i \left( \tp^{D_{u+1}}\vert_{ij} - \tp^{D_u}\vert_{ij}\right) \pi(i) \leq
 \sum_j \sum_{y\geq q} B_{jy}\sum_i  \left( \tp^{D_{u+1}}\vert_{ij} - \tp^{D_u}\vert_{ij}\right ) \bp(i) .$$

Statement 5(a): The proof is as follows: By definition
${\tp}^\p \pi \gr {\btp}^\p \pi$ is equivalent to
$$\sum_{i\in \X} \sum_{m\in\X} \left(\tp_{ij} \btp_{m,j+1} - \btp_{ij}\tp_{m,j+1}\right) \pi_i \pi_m \leq 0. $$
Thus clearly (\ref{eq:mor}) is a sufficient condition for ${\tp}^\p \pi \gr {\btp}^\p \pi$.

Statement 5(b): Since $\tp \succeq \btp$  implies $\tp^\p  \pi \gr \btp^\p  \pi$ it follows from (A2) that $\tp^\p  \tp^\p  \pi \gr \tp^\p  \btp^\p \pi$.
Also Statement 4(a) implies  $\tp^\p \btp^\p \pi  \gr  \btp^\p \btp^\p \pi$.
Since the MLR order is transitive, these inequalities imply $\tp^\p  \tp^\p  \pi \gr  \btp^\p \btp^\p \pi$.
Continuing similarly, it follows that for any positive integer $l$, $({\tp^l})^\p \pi \gr ({\btp}^l)^\p \pi $ .

Statement 6(a):  This follows trivially since Bayes' rule preserves MLR dominance.
That is $\pi \gr \bp$ implies $\frac{B_y \pi}{\ones^\p B_y \pi} \gr   \frac{B_y \bp}{\ones^\p B_y \bp}$.
Since by Statement 4(a), $\tp \succeq \btp$  implies $\tp^\p  \pi \gr \btp^\p  \pi$, applying the Bayes rule preservation of MLR dominance proves the result.

Statement 6(b): 
 (iii) Since $\tp \succeq \btp$  implies $\tp^\p \pi \gr \btp^\p \pi$, it follows that $\tp^\p \pi \gs \btp^\p \pi$. Next  (A3) implies that
 $\sum_{y\geq q} B_{iy} $ is increasing in $i$.  Therefore
 $\sum_{i\in\X} \sum_{y\geq q} B_{iy} [\tp^\p \pi](i) \geq \sum_{i\in \X} \sum_{y\geq q}B_{iy} [\btp^\p \pi](i)$.

\bibliographystyle{plain}
%\bibliography{/cygdrive/z/styles/bib/vkm}
%\bibliography{/elec/a/vikram/BOOK/ORIGINAL/vkm}

%\bibliography{$HOME/vikramk/styles/bib/vkm}
\bibliography{$HOME/styles/bib/vkm}
\end{document}